\newcommand{\qq}{\mathbb Q}
\newcommand{\nat}{\mathbb{N}}
\newcommand{\zz}{\mathbb Z}
\newcommand{\rr}{\mathbb R}
\renewcommand{\H}{\mathcal{H}}
\newcommand{\MH}{\mathsf{M}(\H)}
\newcommand{\Xcod}[1]{\mathcal{X}^{(#1)}}
\newcommand{\mc}[1]{\mathcal{#1}}
\newcommand{\mg}[1]{#1^{\times}}
\newcommand{\sq}[1]{#1^{\times 2}}
\renewcommand{\setminus}{\smallsetminus}
\newcommand{\ffps}[2]{#1 (\!(#2)\!)}
\newcommand{\rfps}[2]{#1 [\![#2]\!]}
\DeclareMathOperator{\newsum}{\mathsf{\Sigma}}
\renewcommand{\sum}{\newsum}
\newcommand{\sums}[1]{\sum\!{#1}^2}
\newcommand{\sos}[2]{\sum_{#1}\!{#2}^2} 
\newcommand{\follows}[2]{$(#1\Rightarrow #2)$}
\newcommand{\mfp}{\mathfrak{p}}
\newcommand{\mfm}{\mathfrak{m}}
\renewcommand{\inf}{\mathsf{inf}}
\renewcommand{\min}{\mathsf{min}}
\newcommand{\half}{\mbox{$\frac{1}2$}}
\newcommand{\X}{\mathcal{X}}
\newcommand{\Spec}[1]{\mathsf{Spec}(#1)}
\renewcommand{\dim}{\mathsf{dim}}
\newcommand{\mf}{\mathfrak}
\newcommand\restr[2]{{
  \left.\kern-\nulldelimiterspace 
  #1 
  \vphantom{\big|} 
  \right|_{#2} 
  }}
\newcommand{\car}[1]{\mathsf{char}({#1})}
\renewcommand{\leq}{\leqslant}
\renewcommand{\geq}{\geqslant}
\renewcommand{\setminus}{\smallsetminus}
\renewcommand{\bmod}{\,\,\mathsf{mod}\,\,}
\newcommand{\Max}[1]{\mathsf{Max}(#1)}
\newcommand{\Jac}{\mathsf{Jac}}
\newcommand{\M}{\mathsf{M}}
\newcommand{\trdeg}[2]{\mathsf{trdeg}(#2/#1)}
\numberwithin{equation}{section}
\newtheorem{thm}[equation]{Theorem}
\newtheorem*{thm*}{Theorem}
\newtheorem{prop}[equation]{Proposition}
\newtheorem{cor}[equation]{Corollary}
\newtheorem*{cor*}{Corollary}
\newtheorem*{conj*}{Conjecture}
\newtheorem{lem}[equation]{Lemma}
\newtheorem{qu}[equation]{Question}
\newtheorem*{qu*}{Question}
\theoremstyle{definition}
\newtheorem{ex}[equation]{Example}
\newtheorem{ex*}{Example}
\newtheorem{rem}[equation]{Remark}
\theoremstyle{plain}
\title[Bounding the Pythagoras number of a field by $2^n+1$]{Bounding the Pythagoras\\ number of a field by~$2^n+1$}
\date{10.02.2024}
\thanks{This work was supported by the FWO Odysseus programme (project G0E6114N, \textit{Explicit Methods in Quadratic Form Theory}).}
\author{Karim Johannes Becher}
\author{Marco Zaninelli}
\address{University of Antwerp, Department of Mathematics, Middelheim\-laan~1, 2020 Antwerpen, Belgium.}
\email{karimjohannes.becher@uantwerpen.be}
\email{marco.zaninelli@uantwerpen.be}
\begin{document}


\begin{abstract}

\medskip\noindent
Given a positive integer $n$, a sufficient condition on a field is given for bounding its Pythagoras number by $2^n+1$.
The condition is satisfied for $n=1$ by function fields of curves over iterated formal power series fields over $\rr$, as well as by finite field extensions of  $\rr(\!(t_0,t_1)\!)$. 
In both cases, one retrieves the upper bound $3$ on the Pythagoras number.
The new method presented here might help to establish more generally $2^n+1$ as an upper bound for the Pythagoras number of function fields of curves over $\rr(\!(t_1,\dots,t_n)\!)$ and for finite field extensions of $\rr(\!(t_0,\dots,t_n)\!)$.

\medskip\noindent
{\sc Keywords:} Sums of squares, semilocal B\'ezout domain, valuation, function field in one variable, quadratic form, local-global principle

\medskip\noindent
{\sc Classification (MSC 2020):} 11E25
\end{abstract}

\maketitle

\section{Introduction}

In 1964, J.W.S.~Cassels \cite[Theorem~2]{Cas64} proved that $X_0^2+\dots+X_n^2$ is not a sum of $n$ squares in the field $\rr(X_0,\dots,X_n)$ for any $n\in\nat$.
This was the starting point of a systematic study of sums of squares in fields, more specifically of the problem of how many terms are generally needed in a given field to represent an arbitrary positive element as a sum of squares.
This is captured in the notion of Pythagoras number, which was introduced  by L.~Br\"ocker \cite{Bro78} and A.~Prestel \cite{Pre78}.

Given a commutative ring $R$, we denote by $\sos{k}{R}$ for $k\in\nat$ the set of elements of $R$ that are sums of $k$ squares in $R$, and we set $\sums{R}=\bigcup_{k\in\nat}\sos{k}{R}$.

Let $F$ be a field. 
For $S\subseteq F$ we set $\mg{S}=\{x\in S\setminus\{0\}\mid x^{-1}\in S\}$.
We have $\mg{(\sos{k}{F})}=\sos{k}{F}\setminus\{0\}$ for any $k\in\nat$. 
Furthermore, $\mg{(\sums{F})}=\sums{F}\setminus\{0\}$ and this is a subgroup of $\mg{F}$.
A.~Pfister \cite[Satz 2]{Pfi65} proved that $\mg{(\sos{2^n}{F})}$ is a subgroup of $\mg{F}$ for any $n\in\nat$.
By the Artin-Schreier Theorem \cite[Satz 7b]{AS26}, the field $F$ admits a field ordering if and only if $-1\notin\sums{F}$.
In view of this fact, the field $F$ is called \emph{real} if $-1$ is not a sum of squares in $F$, and  \emph{nonreal} otherwise.
The values
\begin{eqnarray*}
p(F) & = & \inf\{k\in\nat\mid \hbox{$\sums{F}$}=\sos{k}{F}\}\, \,\in\nat\cup\{\infty\} \mbox{ and }\\
s(F) & = & \inf\{k\in\nat\mid -1\in \sos{k}{F}\} \quad \in \nat\cup\{\infty\}\,
\end{eqnarray*}
are called the \emph{Pythagoras number} and the \emph{level of $F$}, respectively.
Note that $s(F)=\infty$ if and only if $F$ is real.
If $F$ is nonreal, then as a consequence of the fact that $\mg{(\sos{2^n}{F})}$ is a group for any $n\in\nat$, A.~Pfister \cite[Satz 4]{Pfi65} showed that $s(F)$ is a power of $2$, and it is easy to see that $s(F)\leq p(F)\leq s(F)+1$.
Examples of nonreal fields can be constructed to realise all pairs of values for $s(F)$ and $p(F)$ subject to these constraints. 

D. Hoffmann showed that every positive integer occurs as the Pythagoras number of some real field \cite{Hof99}. 
Nevertheless, to this date the only known examples of real fields with Pythagoras number not contained in $\{2^n,2^n+1\mid n\in\nat\}\cup\{\infty\}$ are constructed by an infinite iteration of function field extensions. 
In particular, no such fields are known that are finitely generated over any proper subfield.

A crucial open problem is to understand the growth of the Pythagoras number under field extensions with a real base field. 
We denote by $F(X)$ the rational function field in the variable $X$ over $F$.
It has been a crucial discovery by J.W.S.~Cassels \cite[Theorem 2]{Cas64} that $p(F(X))>p(F)$ holds whenever $F$ is real.
However, we still do not know whether the Pythagoras number grows slowly or fast when adding variables and passing from a real field to the rational function field over it. 
To this date, there is no confirmed example where $p(F(X))>p(F)+2$.
For arbitrary $n\in\nat$, A.~Pfister showed in \cite[Satz 2]{Pfi67} that $p(F(X))\leq 2^{n+1}$ if and only if $s(F')\leq 2^n$ for all finite nonreal field extensions $F'/F$.

Let us turn our attention to the problem of determining the Pythagoras number of a specific field. 
Let $n\in\nat$.
A famous result due to A.~Pfister states that $p(F)\leq 2^n$ when $F$ is a field extension of transcendence degree $n$ of $\rr$ (or of any real closed field); see \cite[Theorem 1]{Pfi67}.
In particular 
$p(\rr(X_1,\dots,X_n))\leq 2^n$.
J.W.S.~Cassels, W.J.~Ellison and A.~Pfister proved in \cite{CEP71} that $p(\rr(X_1,X_2))= 4$ by showing that the polynomial $$M(X_1,X_2)=X_1^2X_2^4+X_1^4X_2^2-3X_1^2X_2^2+1$$ is not a sum of three squares in $\rr(X_1,X_2)$.
The polynomial $M$ is known as the \emph{Motzkin polynomial}, after T.S.~Motzkin, who gave it in \cite{Mo67} as an example of a polynomial that 
is a sum of squares in the field $\rr(X_1,X_2)$, but not in the polynomial ring $\rr[X_1,X_2]$.

The fact that the result of \cite{CEP71} and Cassels' observation on $X_0^2+\dots+X_n^2$ mentioned at the beginning were not found before the 1960s may indicate how difficult it is to show that a certain sum of squares in a field cannot be equal to a shorter sum of squares.

O.~Benoist recently showed that  $$p\big(\rr(\!(X_0,\dots,X_n)\!)(Y_1,\dots,Y_r)\big)\leq 2^{n+r}\,$$ 
for any $n,r\in\nat$; this follows from \cite[Theorem 0.2]{Ben20} by using \cite[Theorem 3.5]{BVG09}.
When $n\geq 1$ and $F$ is a finite extension of $\rr(\!(X_0,\dots,X_n)\!)$, then the statement for $r=1$ implies that $p(F)< 2^{n+1}$.
One may ask whether this can be improved.

\begin{qu}\label{Q1}
\!Is $p(F)\leq 2^n+1$ for every finite field extension $F/\rr(\!(X_0,\dots,X_n)\!)$?
\end{qu}

This bound is trivial and optimal for $n=0$, and for $n=1$ this bound was shown in \cite[Theorem 5.1]{Hu15}.
 Clearly \Cref{Q1} has a positive answer when restricting to nonreal fields $F$, because then $p(F)<2^{n+1}$ implies that $s(F)\leq 2^n$ and thus $p(F)\leq 2^n+1$; see also \cite[Theorem 0.2~$(i)$]{Ben20}.
The question is open for $n\geq 2$ when $F$ is real.

In this article, we develop a new method that may help to obtain a positive answer to \Cref{Q1}. 
The method is specifically designed for showing under certain  assumptions on a field $F$ that $p(F)\leq 2^n+1$.
These assumptions include the presence of a local-global principle for certain quadratic forms.
As such, this is a fairly standard approach. 
Indeed, given $k\in\nat$ and an arbitrary field element $a$ which is a sum of squares, the question whether $a$ can be written as a sum of $k$ squares is reformulated in terms of the existence of a nontrivial zero for the $(k+1)$-ary quadratic form $\sum_{i=1}^kX_i^2-aX_0^2$, and hence a local-global criterion for the existence of zeros may help in bounding by $k$ the number of square terms that are needed. 
In our approach, however, we assume that $k=2^n$ and we want to use a valuation theoretic local-global principle for characterising the sums of $k$ squares (rather than the sums of $k+1$ squares) while aiming for the bound $p(F)\leq k+1$.
This is reminiscent of Pourchet's method in \cite{Pou71} for proving that $p(K(X))\leq 5$ when $K$ is a number field, where a description of the sums of $4$ squares in $K(X)$ via local conditions is essential.
Field extensions $F/\qq$ of transcendence degree one provide an example where a local-global criterion for sums of $4$ squares in $F$ is available by \cite[Theorem 0.8~$(2)$]{Kato86}, while a similar local-global criterion for being a sum of $5$ squares would actually imply that $p(F)\leq 5$, which however is still an open problem.

Our method for obtaining the bound $p(F)\leq 2^n+1$ depends on a certain subring $\H$ of $F$ that provides a characterisation of the sums of $2^n$ squares in $F$.
After developing the new setup in the next two sections, we will turn in the last two sections to applications, first to function fields in one variable, then to finite extensions of a field of formal power series in two variables, in both cases under strong hypotheses on the base field.
By a \emph{function field in one variable}, we mean a finitely generated field extension of transcendence degree one.

In our applications the subring $\H$ of $F$ will be given as a finite intersection of valuation rings of $F$, where the corresponding valuations yield a local-global principle for the quadratic forms $\sum_{i=1}^{2^n}X_i^2-aX_0^2$ with $a\in\mg{(\sums{F})}$.
These quadratic forms are in particular Pfister neighbors, and due to the direct link of such forms to Galois cohomology classes, such a local-global principle may be easier to establish than for the corresponding quadratic forms of dimension $2^n+2$. Up to this local-global ingredient, our method is elementary.

As an illustration of the efficiency and applicability of our method, let us look at a well-known example, due to S.~Tikhonov. Let $F$ be the function field of the curve  $Y^2=(tX-1)(X^2+1)$ over  $\rr(\!(t)\!)$.
It was observed in \cite[Example 3.10]{TVGY06} that $3\leq p(F)\leq 4$, and it was shown in \cite[Corollary 6.13]{BGVG14} that $p(F)=3$.
The argument in \cite{BGVG14} for having $p(F)\leq 3$ relies on a deep local-global principle from \cite[Theorem 3.1]{CTPS12}, based on field patching and using further results from algebraic geometry, such as embedded resolutions of singularities.
In \Cref{Tikhonov curve}, we will retrieve the equality $p(F)=3$ from our new method and results from \cite{BVG09}. This approach uses no results from algebraic geometry and only Milnor's Exact Sequence for the rational function field $\rr(\!(t)\!)(X)$ as a local-global ingredient (via the proof of \cite[Theorem 3.10]{BVG09}).

Along the same lines, we will obtain in \Cref{Motzkin curve} that $p(F)=5$ for the field
$$F=\rr (Y)(\!(t)\!)(X)\left(\sqrt{(tX-1)M(X,Y)}\right)$$ where $M(X,Y)=X^2Y^4+X^4Y^2-3X^2Y^2+1$, the Motzkin polynomial from above.

Our study will also lead to the following result, which seems not to be covered in the literature, although it could also be proven by methods already in place.

\begin{thm}[\Cref{C:final bound}]\label{T:new-bound}
Let $n\in\nat^+$ be such that $p(E)\leq 2^{n}$ for every finite field extension $E/K(X)$. 
Let $r\in\nat$ and let $F/K(\!(t_1)\!)\dots(\!(t_r)\!)(\!(t_{r+1},t_{r+2})\!)$ be a finite field extension.
Then $p(F)\leq 2^n+1$.
\end{thm}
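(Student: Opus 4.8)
The strategy is to reduce \Cref{T:new-bound} to the general criterion developed in the earlier sections (the one that yields $p(F)\le 2^n+1$ from the existence of a subring $\H\subseteq F$ characterising the sums of $2^n$ squares via a valuation-theoretic local-global principle), and to build the required subring $\H$ from the structure of the two-variable power series field at the bottom of the tower. Write $K_0=K$ and $K_i=K(\!(t_1)\!)\cdots(\!(t_i)\!)$ for $1\le i\le r$, and $L=K_r(\!(t_{r+1},t_{r+2})\!)$, so that $F/L$ is a finite extension. First I would set up the discrete valuation on $K_r(\!(t_{r+1},t_{r+2})\!)$ coming from, say, the $t_{r+2}$-adic order after fixing a suitable monomial ordering — more precisely, $\rr[\![t_1,\dots,t_r,t_{r+1},t_{r+2}]\!]$ inside sits inside $L$, and the residue field of an appropriate rank-one discretisation is a function field in one variable over $K_r$, or a finite extension thereof. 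The point of the hypothesis ``$p(E)\le 2^n$ for every finite extension $E/K(X)$'' is precisely that it is inherited by function fields of curves over the iterated Laurent series field $K_r$: passing from $K(X)$ to $K_r(X)$ costs nothing for the sums-of-$2^n$-squares property because each step $K(\!(t)\!)$ is handled by a Springer-type / Milnor exact sequence argument, so that $p(E')\le 2^n$ for every finite extension $E'$ of $K_r(X)$ as well. This is essentially \cite[Satz 2]{Pfi67} applied inductively together with the behaviour of the level under Laurent series extensions.

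Next I would produce the subring $\H$. The natural candidate is the intersection of the finitely many valuation rings of $F$ lying over the two ``coordinate'' valuations of $L$ associated with $t_{r+1}$ and $t_{r+2}$ (after a resolution-free normalisation — one uses that $\rr[\![t_{r+1},t_{r+2}]\!]$ is a regular local ring of dimension two, so its two height-one coordinate primes give two discrete valuations whose intersection is semilocal Bézout, exactly the kind of ring the earlier sections are tailored to). Concretely, $\H$ should be the integral closure in $F$ of the localisation of $\rr[\![t_1,\dots,t_{r+2}]\!]$ at the multiplicative set generated by everything outside those two primes; this is a semilocal Bézout domain, and its residue fields are finite extensions of function fields in one variable over $K_r$. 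Then the key claim — and this is where the real work sits — is the local-global principle: an element $a\in\mg{(\sums F)}$ is a sum of $2^n$ squares in $F$ if and only if it is a sum of $2^n$ squares in each completion $F_v$ for $v$ running over the (finitely many) valuations defining $\H$. For the completions one uses that $F_v$ is itself a complete discretely valued field whose residue field $\kappa(v)$ is a finite extension of a function field over $K_r$, so $p(\kappa(v))\le 2^n$ by the inherited hypothesis; a Springer-type argument (the quadratic form $\langle 1,\dots,1,-a\rangle$ of dimension $2^n+1$ is a Pfister neighbour, hence its isotropy is governed by residue data) then shows that $a$ is a sum of $2^n$ squares locally whenever it is a sum of any number of squares locally. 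The global-to-local direction being trivial, the content is local-to-global, and that is supplied by the exactness input cited in the abstract — the second residue / Milnor sequence for the relevant two-variable situation, which for two-dimensional regular complete local base has been worked out in the literature and says precisely that a class in $H^{n+1}$ (or in $I^{n+1}$) that vanishes at both coordinate valuations vanishes globally over $F$, once one knows it vanishes over the residue function fields by the $p\le 2^n$ hypothesis there.

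With $\H$ and the local-global principle in hand, the general theorem of the earlier sections applies verbatim: $\H$ characterises the sums of $2^n$ squares in $F$, it is a semilocal Bézout domain with the required finiteness and residue properties, and therefore $p(F)\le 2^n+1$. I expect the main obstacle to be the local-global step, specifically checking that the two coordinate valuations of the two-variable power series field really do suffice — i.e.\ that no ``horizontal'' or exceptional valuations (the ones that would appear after a blow-up) are needed for detecting sums of $2^n$ squares. This should follow because $\rr[\![t_{r+1},t_{r+2}]\!]$ is already regular, so there is nothing to resolve and the support of any relevant cohomology class, being contained in a normal crossings divisor, is already visible at the two coordinate primes; but making this precise, and handling the finite extension $F/L$ (where the integral closure may genuinely have several valuations over each of the two below, and where wild ramification at the residue characteristic could in principle intervene if $K$ had positive characteristic — hence the real-field hypotheses in the applications), is the delicate part. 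The reduction of the tower $K_1,\dots,K_r$ to the base case, by contrast, is routine: each Laurent series step preserves ``$p\le 2^n$ for all finite extensions of the rational function field'' by a standard Springer/Milnor argument, so only the innermost two-variable layer carries analytic content.
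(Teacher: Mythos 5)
Your reduction rests on the claim that ``each Laurent series step preserves $p\le 2^n$ for all finite extensions of the rational function field,'' so that every residue field produced by the two-variable layer would have Pythagoras number at most $2^n$ and hence local sums of squares would automatically be sums of $2^n$ squares. This is false, and no Springer/Milnor-type argument can give it: by \Cref{Tikhonov curve} there is a function field in one variable over $\rr(\!(t)\!)$ with Pythagoras number $3=2^1+1$, although every finite extension of $\rr(X)$ has Pythagoras number at most $2$, and \Cref{Motzkin curve} gives the analogous example with $5=2^2+1$ over $\rr(Y)(\!(t)\!)$. The entire point of the notions of $n$-effective field and $\mc{P}_n$-field in the paper is that only a weaker property passes through $K\rightsquigarrow K(\!(t)\!)$ (\Cref{T:example2}): the residue fields one meets may have Pythagoras number $2^n+1$, and they are handled not by bounding $p$ but by exhibiting inside them a semilocal B\'ezout ring which is square-effective of the right order and then pulling it back (\Cref{Inductive step}, \Cref{T:example0}). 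Your plan of taking for $\H$ simply an intersection of valuation rings of $F$ omits this refinement, without which neither $p^*(\H)\le 2^n$ nor the residue-level conditions of \Cref{Bound for k-effective} can be verified.

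The second gap is the local-global step itself, which you defer but which is where the content of the theorem lies. The two ``coordinate'' valuations of $K_r(\!(t_{r+1},t_{r+2})\!)$ (and their finitely many extensions to $F$) are not only insufficient, they are the wrong valuations: their residue fields are complete discretely valued fields such as $K_r(\!(t_{r+1})\!)$, with residue a finite extension of $K_r$ — in particular real when $K$ is real, so your $\H$ would already fail $(1+\sos{2^n}{\H})\cap\Jac(\H)\neq\emptyset$ — whereas the valuations that actually obstruct, i.e.\ those with $p(F^v)>2^n$, are centred at the closed point of the integral closure $S$ of $K_r[\![t_{r+1},t_{r+2}]\!]$ in $F$ and have as residue fields nonruled function fields in one variable over $K_r$ (\Cref{FPS in 2 variables}). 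The paper's local-global input is the patching principle of Harbater--Hartmann--Krashen with respect to \emph{all} $\zz$-valuations of $F$, applied via \Cref{C:characterise-criterion}, and the finiteness of the relevant set $V$ is a genuinely geometric statement (\Cref{fin val}) whose proof uses Lipman's resolution of singularities for the possibly non-regular ring $S$ together with a blow-up/ruledness analysis (\Cref{ruled}); regularity of $K_r[\![t_{r+1},t_{r+2}]\!]$ does not make these steps dispensable, precisely because $F/L$ is a finite extension. So both the proposed set of valuations and the claimed bound at their completions are incorrect, and the argument as outlined cannot be repaired without importing the machinery you intended to bypass.
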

This recovers the positive answer to \Cref{Q1} for $n=1$, which was previously obtained in \cite[Theorem 5.1]{Hu15} by different means.
\Cref{T:new-bound} also applies to the cases where $K$ is an extension of transcendence degree $n-1$ of $\rr$, or an extension of transcendence degree $n-2$ of $\qq$ for $n\geq 3$; see \Cref{FPS}.
So, for example, we now obtain that $p(F)\leq 9$ for every 
finite extension $F$ of $\qq(X)(\!(t_1,t_2)\!)$.
\smallskip

We end the introduction by fixing some terms for a general commutative ring $R$.
We denote by $\mg{R}$ the group of invertible elements in $R$, by $\car R$ the characteristic of $R$, by $\Max R$ the set of maximal ideals of $R$ and by $\Spec R$ the set of prime ideals of $R$. 
We say that $R$ is \emph{semilocal} if $|\Max R|<\infty$. We denote by $\Jac(R)$ the \emph{Jacobson radical of $R$},  defined by $$\Jac(R)=\{x \in R \mid 1-Rx\subseteq \mg{R}\}\,,$$ and we recall that $\Jac(R)=\bigcap \Max R$. 
We denote by $\textbf{0}_R$ the zero ideal of $R$.
For a prime ideal $\mfm\in\Spec R$, we denote by $R_\mfm$ the localisation of $R$ at $\mfm$.

Furthermore, we set $\nat^+=\nat\setminus\{0\}$.
\pagebreak

\section{Square-effective rings} 

Let $F$ denote a field of characteristic different from $2$.

Let $\H$ be a subring of $F$ having $F$ as its field of fractions.
We set $$p^*(\H)=\inf\{ k\in \nat \mid \mg \H \cap \sums F  \subseteq \sos{k}{F} \} \in \nat \cup \{\infty\}\,.$$ 
Note that if $F$ is nonreal, then $-1\in \mg \H \cap \sums F$, thus $ s(F)\leq p^*(\H)$.

\begin{prop}\label{T:main}
Assume that $\sums F\subseteq F^2\cdot   ((1+\sums F) \cap\Jac(\H))$.
Then $$p(F)\leq p^*(\H)+1\,.$$
\end{prop}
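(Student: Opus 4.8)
The plan is to show directly that $\sums F\subseteq\sos{k+1}{F}$, where $k=p^*(\H)$; together with the trivial inclusion $\sos{k+1}{F}\subseteq\sums F$ this yields $p(F)\leq k+1$. If $k=\infty$ the asserted bound is vacuous, so I may assume $k\in\nat$, and I note that $1\in\mg{\H}\cap\sums F$ forces $k\geq 1$.

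Let $a\in\sums F$. If $a=0$ then $a\in\sos{k+1}{F}$ trivially, so assume $a\neq 0$. By hypothesis I may write $a=c^2h$ with $c\in F$ and $h\in(1+\sums F)\cap\Jac(\H)$, and since $a\neq 0$ this forces $c\in\mg F$ and $h\neq 0$. Write $h=1+s$ with $s\in\sums F$. The crucial point is that $h\in\Jac(\H)$: applying the defining property of $\Jac(\H)$ to the element $1\in\H$ shows that $1-h\cdot 1\in\mg{\H}$, that is, $-s\in\mg{\H}$, whence also $s\in\mg{\H}$ since $\mg{\H}$ is stable under negation. Thus $s\in\mg{\H}\cap\sums F$, so by the definition of $p^*(\H)=k$ we obtain $s\in\sos{k}{F}$. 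Consequently $h=1+s\in\sos{k+1}{F}$, and multiplying a representation of $h$ as a sum of $k+1$ squares through by $c^2$ gives $a=c^2h\in\sos{k+1}{F}$. Since $a\in\sums F$ was arbitrary, this establishes $\sums F\subseteq\sos{k+1}{F}$, and therefore $p(F)\leq p^*(\H)+1$.

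I do not expect a genuine computational obstacle here: the whole content of the argument is the device of using $h\in\Jac(\H)$ to convert the sum of squares $s=h-1$ into a unit $-s=1-h$ of $\H$, which is exactly what makes the quantity $p^*(\H)$ available to bound the number of square terms needed for $s$, and hence for $h$ and for $a=c^2h$. The only points requiring care are the degenerate cases (the case $a=0$ and the case $p^*(\H)=\infty$) and the routine facts that $\mg F$ and $\mg{\H}$ are closed under negation and that $c^2\cdot\sos{k+1}{F}\subseteq\sos{k+1}{F}$.
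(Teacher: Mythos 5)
Your proof is correct and takes essentially the same route as the paper: write $a=c^2h$ with $h\in(1+\sums F)\cap\Jac(\H)$, use the Jacobson-radical membership to turn a sum of squares into a unit of $\H$ so that the definition of $p^*(\H)$ applies, and then split off one extra square. The only difference is cosmetic: the paper applies the radical property with the element $4$, getting $1-4f\in\mg{\H}$ and the decomposition $g=(\half c)^2+(\half c)^2(4f-1)$, whereas you apply it with $1$, getting $h-1\in\mg{\H}\cap\sums F$ and $a=c^2+c^2(h-1)$, which is marginally more direct.
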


\begin{proof}
We set  $p=p^*(\H)$.
Consider $g\in \sums{F}$. 
By the hypothesis, there exist $c\in F$ and $f \in (1+\sums F) \cap \Jac(\H)$ such that $g=c^2f$.
Since $f\in \Jac(\H)$, we have $1-4f\in \mc H^{\times}$, and since $f \in 1+\sums F$, we have $4f-1 \in 3 + 4\sums F \subseteq \sums{F}$. Hence $4f-1 \in \mathcal{H}^{\times} \cap \sums{F}\subseteq \sos{p}{F}$.
We conclude that $$g-(\half c)^2=(\half c)^2(4f-1)\in \sos{p}{F}\,,$$ whereby $g\in \sos{p+1}{F}$.
This shows that $\sums{F}=\sos{p+1}{F}$.
\end{proof}

We say that $\H$ is \emph{square-effective} if, for every $f_1,f_2 \in F$, there exist $g_1,g_2 \in F$ such that $f_1^2+f_2^2=g_1^2+g_2^2$ and $f_1\mc H + f_2\mc H = g_1 \mc H \supseteq g_2\H $.
A \emph{B\'ezout domain} is a domain in which every finitely generated ideal is principal.
Recall that B\'ezout domains are integrally closed \cite[Theorem 50]{Kap74}.

\begin{prop}\label{Minimal Value Representation}
Assume that $\H$ is square-effective. Let $k\in\nat^+$. Then: 
\begin{enumerate}[$(a)$]
\item $\H$ is a B\'ezout domain.
\item For every $f_1,\dots,f_k \in F$, there exist $g_1,\dots,g_k \in F$ such that\\ $f_1^2+\dots+f_k^2=g_1^2+\dots+g_k^2$ and $f_1\mc H+\dots + f_k\mc H = g_1 \mc H \supseteq g_2\H \supseteq \dots \supseteq g_k\mc H$.
\item $\sos{k}F= F^2 \cdot (1+ \sos{k-1}\H)$.
\item If $1+\sos{k-1}\H\subseteq \mg\H$, then $\H\cap\sos{k}F= \sos{k}\H$.
\item If $2\in \mg \H$, then for every $\mfm\in\Max\H$, we have $\mg{\H}\cap \sos{k}{F}\subseteq \sos{k}{\H}+\mfm$.
\end{enumerate}
\end{prop}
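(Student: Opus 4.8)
The plan is to prove the five assertions more or less in the listed order, since each one builds on the previous, using the square-effectiveness hypothesis as the sole structural input.

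For $(a)$, I would observe that square-effectiveness is exactly the two-variable case of a statement about sums of squares and ideals, and the point of $(b)$ is to bootstrap it to $k$ variables. So I would prove $(b)$ first by induction on $k$, the base case $k=1$ being trivial and $k=2$ being the definition. For the inductive step, given $f_1,\dots,f_k$, I would first apply the induction hypothesis to $f_1,\dots,f_{k-1}$ to replace them by $h_1,\dots,h_{k-1}$ with $h_1\H\supseteq\dots\supseteq h_{k-1}\H$ and the same sum of squares and the same generated ideal; then apply square-effectiveness to the pair $(h_1,f_k)$ to get $g_1,g_2$ with $g_1\H=h_1\H+f_k\H\supseteq g_2\H$ and $h_1^2+f_k^2=g_1^2+g_2^2$. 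The resulting tuple $g_1,g_2,h_2,\dots,h_{k-1}$ has the right sum of squares and the right total ideal $g_1\H$, but the chain condition $g_1\H\supseteq g_2\H$, $g_1\H\supseteq h_2\H\supseteq\dots$ need not be a single descending chain through $g_2$: one must compare $g_2\H$ with $h_2\H$. This is the one genuinely delicate bookkeeping point. I would handle it by a second application of square-effectiveness (or of the $k=2$ case) to the pair $(g_2,h_2)$, noting that since $h_2\H\subseteq h_1\H\subseteq g_1\H$ and $g_2\H\subseteq g_1\H$, rearranging that pair keeps everything inside $g_1\H$; iterating this "insertion sort" down the chain produces the desired totally ordered chain. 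Once $(b)$ holds, $(a)$ follows: for a finitely generated ideal $f_1\H+\dots+f_k\H$, part $(b)$ gives $g_1$ with $f_1\H+\dots+f_k\H=g_1\H$, so every finitely generated ideal is principal; integral closedness is then quoted from Kaplansky.

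For $(c)$, the inclusion $F^2\cdot(1+\sos{k-1}\H)\subseteq\sos{k}F$ is immediate since $1+\sos{k-1}\H\subseteq\sos{k}{F}$ and $\sos{k}{F}$ is closed under multiplication by squares. Conversely, given a nonzero element $f_1^2+\dots+f_k^2$ of $\sos{k}{F}$, apply $(b)$ to get $g_1,\dots,g_k$ with the same sum of squares and $g_1\H\supseteq g_2\H\supseteq\dots\supseteq g_k\H$; then $g_i/g_1\in\H$ for each $i$, so $f_1^2+\dots+f_k^2=g_1^2\bigl(1+(g_2/g_1)^2+\dots+(g_k/g_1)^2\bigr)\in F^2\cdot(1+\sos{k-1}\H)$ (the zero case is trivial). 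Part $(d)$ is a short deduction from $(c)$: if $a\in\H\cap\sos{k}F$, write $a=c^2(1+s)$ with $c\in F$ and $s\in\sos{k-1}\H$; the hypothesis $1+\sos{k-1}\H\subseteq\mg\H$ makes $1+s$ a unit of $\H$, so $c^2=a(1+s)^{-1}\in\H$, hence $c\in\H$ because $\H$ is integrally closed by $(a)$, and therefore $a=c^2(1+s)\in\sos{k}\H$; the reverse inclusion is obvious.

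For $(e)$, fix $\mfm\in\Max\H$ and $a\in\mg\H\cap\sos{k}F$. Using $(b)$, write $a=g_1^2+\dots+g_k^2$ with $g_1\H\supseteq\dots\supseteq g_k\H$, so $g_i/g_1\in\H$; since $a$ is a unit and $a/g_1^2=1+(g_2/g_1)^2+\dots+(g_k/g_1)^2$, the element $g_1^2$ differs from the unit $a$ by the factor $1+\sum(g_i/g_1)^2\in\H$, and I want to argue $g_1\in\mg\H$ — this needs care, but in any case $g_1^2\H=a\H\cdot(1+\sum(g_i/g_1)^2)^{-1}$ forces $1+\sum_{i\ge 2}(g_i/g_1)^2\in\mg\H$, hence $g_1\H\not\subseteq\mfm$, so $g_1$ is a unit in the local ring $\H_\mfm$. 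Reducing modulo $\mfm$ (or working in $\H_\mfm/\mfm\H_\mfm$), using $2\in\mg\H$ so that the residue field has characteristic $\ne 2$ and squares behave well, one gets $a\equiv g_1^2+\dots+g_k^2\pmod{\mfm}$ with each $g_i\in\H_\mfm$; lifting the $g_i/g_1$ back appropriately and multiplying by $g_1^2\in\mg{\H_\mfm}$ shows $a$ is congruent modulo $\mfm$ to an element of $\sos{k}\H$, i.e. $a\in\sos{k}\H+\mfm$. The main obstacle throughout is the chain-rearrangement argument in $(b)$: making precise that one can permute and re-sort the representing tuple while preserving both the sum of squares and the total generated ideal, and keeping the descending chain intact — everything else is then a fairly direct consequence of working inside the integrally closed B\'ezout domain $\H$.
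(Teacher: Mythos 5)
Your parts $(a)$--$(d)$ are essentially correct and close to the paper's own arguments. In $(b)$ the paper avoids your ``insertion sort'' by simply applying the induction hypothesis a second time to the whole tail $(h,h_2,\dots,h_k)$ produced after one application of square-effectiveness; your one-pass pairwise re-sorting can be made to work, but note that at each later step you need the new pair's ideal to be contained in the \emph{previously produced} chain member (not merely in $g_1\H$), which holds because $b\H+h_{i+1}\H\subseteq a_i\H$ at each stage — a detail worth writing out, and one the paper's route sidesteps.

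The genuine gap is in $(e)$. From $a=g_1^2u$ with $u=1+\sum_{i\geq 2}(g_i/g_1)^2\in\H$ and $a\in\mg{\H}$ you cannot conclude that $u\in\mg{\H}$, nor even that $u\notin\mfm$: the relation $g_1^2\H=au^{-1}\H$ is one of fractional ideals and forces nothing, since $u^{-1}$ need not lie in $\H$. For instance, with the $5$-adic valuation ring inside $\qq$ one has $2=(7/5)^2+(1/5)^2$, where $u=1+(1/7)^2=50/49$ lies in the maximal ideal although $2$ is a unit; so the inference ``$a$ a unit, $u\in\H$, $a=g_1^2u$, hence $u$ a unit'' is invalid as a matter of ideal arithmetic. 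In fact $u\in\mfm$ exactly means $-1\in\sos{k-1}{(\H/\mfm)}$, i.e.\ $s(\H/\mfm)\leq k-1$, and part $(e)$ assumes only $2\in\mg\H$, with no hypothesis on the levels of the residue fields, so this case cannot be excluded (by contrast, in $(d)$ the unit property of $1+\sos{k-1}{\H}$ is an explicit hypothesis). This is precisely where the paper splits into two cases: if $s(\H/\mfm)\geq k$, then $1+\sos{k-1}{\H_\mfm}\subseteq\mg{\H_\mfm}$ and your argument goes through in $\H_\mfm$ (equivalently, apply $(d)$ to $\H_\mfm$ and use $\H/\mfm\simeq\H_\mfm/\mfm\H_\mfm$ to choose representatives in $\H$); if $s(\H/\mfm)\leq k-1$, then $-1\in\sos{k-1}{\H}+\mfm$ and the identity $x=\left(\frac{x+1}{2}\right)^2+(-1)\left(\frac{x-1}{2}\right)^2$ — the point where $2\in\mg\H$ is genuinely needed — gives $\H\subseteq\sos{k}{\H}+\mfm$, so the claim holds trivially. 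Your proof is missing this dichotomy and the low-level case entirely.
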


\begin{proof}
$(a)$\ Since $\H$ is square-effective, any ideal of $\H$ generated by two elements is principal. Therefore any finitely generated ideal of $\H$ is principal.

$(b)$\
We prove the statement by induction on $k$.
If $k=1$, then there is nothing to show.
Assume now that the statement holds for some $k \geq 1$. 
In order to prove the statement for $k+1$, we consider $f_0,\dots, f_k \in \H$.
By the induction hypothesis, there exist $h_1,\dots,h_k \in F$ such that $f_1^2+\dots+f_k^2=h_1^2+\dots+h_k^2$ and $f_1\mc H+\dots + f_k\mc H = h_1 \mc \H \supseteq \dots \supseteq h_k\mc H$.
It follows that $f_0\H+\dots +f_k\H=f_0\H+h_1\H$. As $\mc H$ is square-effective, we may choose $g_0,h \in F$ such that $f_0^2+h_1^2=g_0^2+{h^2}$ and $f_0\mc H+ h_1\mc H=g_0\H \supseteq h\H$.
Then we have $f_0^2+\dots+f_k^2=g_0^2+{h^2}+h_2^2+\dots+h_k^2$ and $f_0\mc H+\dots + f_k\mc H=g_0\H \supseteq h\H +h_2\H \dots + h_k\mc H$.
Again by the induction hypothesis, there exist $g_1,\dots,g_k\in F$ such that 
${h^2}+h_2^2+\dots+h_k^2=g_1^2+\dots+g_k^2$ and $h\H+h_2\H+\dots+h_k\H=g_1\H\supseteq g_2\H\supseteq \dots\supseteq g_k\H$.
We conclude that 
 $f_0^2+\dots+f_k^2=g_0^2+\dots+g_k^2$ and $f_0\H+\dots+f_k\H=g_0\H\supseteq g_1\H\supseteq \dots\supseteq g_k\H$.

$(c)$\ It follows by $(b)$ that $\sos{k}F\subseteq F^2 \cdot (1+ \sos{k-1}\H)$. 
The opposite inclusion holds trivially.

$(d)$\ 
Let $f\in \H\cap\sos{k}F$.
By $(c)$, we obtain that $f=x^2\cdot h$ for certain 
$x \in F$ and $h \in 1+ \sos{k-1}\H$. 
Now, if $h\in\mg \H$, then as $x$ is a root of $X^2-fh^{-1}$ and $\H$ is integrally closed, we get that $x\in \H$, whereby $f\in \H^2\cdot (1+ \sos{k-1}\H)\subseteq \sos{k}{\H}$.
Therefore, if $(1+ \sos{k-1}\H)\subseteq \mg \H$, then $\H\cap\sos{k}F= \sos{k}\H$.

$(e)$\ Let $\mfm\in\Max\H$.
Since $\H$ is square-effective, so is $\H_\mfm$, and we have $2\in\mg\H$. 
Assume first that $s(\H/\mfm)\geq k$.
Since $\H/\mfm\simeq\H_\mfm/\mfm\H_\mfm$, it follows 
that $1+\sos{k-1}{\H}_\mfm\subseteq \mg{\H}_\mfm$.
Hence, by $(d)$, we have $\H_\mfm\cap \sos{k}{F}=\sos{k}{\H}_\mfm$.
Again using that $\H/\mfm\simeq\H_\mfm/\mfm\H_\mfm$, we conclude that $\H\cap \sos{k}F\subseteq \sos{k}\H +\mfm$.
Assume now that $k> s(\H/\mfm)$.
Then $-1 \in \sos{k-1}{\H}+\mfm$.
Using  this and the identity $x=\left(\frac{x+1}{2}\right)^2 + (-1)\left(\frac{x-1}{2}\right)^2$ for $x\in\H$, we
conclude that $\H\subseteq \sos{k}\H+\mfm$.
\end{proof}

\begin{thm}\label{C:square-effective-Mnotempty}
Assume that $\H$ is square-effective and $(1+\sums F) \cap\Jac(\H)\neq \emptyset$. 
Then $$p(F)\leq p^*(\H)+1\,.$$
\end{thm}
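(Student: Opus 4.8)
The plan is to deduce this from \Cref{T:main}: it is enough to show that the two hypotheses together imply
$$\sums F\subseteq F^2\cdot\bigl((1+\sums F)\cap\Jac(\H)\bigr),$$
and then to apply that proposition. I would first dispose of the nonreal case separately: if $F$ is nonreal, then, as observed just after the definition of $p^*(\H)$, we have $s(F)\leq p^*(\H)$, whence $p(F)\leq s(F)+1\leq p^*(\H)+1$. So I may assume henceforth that $F$ is real, and fix an element $f_0\in(1+\sums F)\cap\Jac(\H)$, which is nonempty by hypothesis; since $F$ is real, $f_0\neq 0$.

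To verify the displayed inclusion, I would first record the elementary facts that $\sums F$ is closed under addition and multiplication and contains $1$, so that $1+\sums F$ is closed under multiplication; in particular $f_0^2\in 1+\sums F$. Now let $g\in\sums F$, which we may assume to be nonzero. Then $g\in\sos{k}{F}$ for some $k\in\nat^+$, and since $\H$ is square-effective, \Cref{Minimal Value Representation}$(c)$ provides $x\in F$ and $h\in 1+\sos{k-1}\H$ with $g=x^2h$; note that $h\in\H$ and $h\in 1+\sums F$. Set $f=hf_0^2$. As $\Jac(\H)$ is an ideal of $\H$ containing $f_0$ and $h\in\H$, we have $f\in\Jac(\H)$; and $f=h\cdot f_0^2\in(1+\sums F)\cdot(1+\sums F)\subseteq 1+\sums F$. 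Hence $f\in(1+\sums F)\cap\Jac(\H)$, and
$$g=x^2h=(xf_0^{-1})^2\,f\in F^2\cdot\bigl((1+\sums F)\cap\Jac(\H)\bigr),$$
which is the hypothesis of \Cref{T:main}. That proposition then yields $p(F)\leq p^*(\H)+1$.

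The step requiring care is the choice of $f$: multiplying $g$ directly by the element $f_0$ of $\Jac(\H)$ would destroy the ``square times element of $\H$'' shape that is needed to land back inside $F^2\cdot\Jac(\H)$. The remedy is to first use square-effectivity (via \Cref{Minimal Value Representation}$(c)$) to replace $g$ by a representative $h$ of its square class that actually lies in $\H$, and then to multiply by $f_0^2$ rather than $f_0$, so that the compensating factor $f_0^{-2}$ absorbed into the square $(xf_0^{-1})^2$ is itself a square. The nonreal case has to be handled on its own only because there $(1+\sums F)\cap\Jac(\H)$ may consist of $0$ alone, leaving no usable $f_0$.
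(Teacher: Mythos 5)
Your proof is correct and follows essentially the same route as the paper's: the paper likewise disposes of the degenerate case via $p(F)\leq s(F)+1\leq p^*(\H)+1$ (phrased as $0\in(1+\sums F)\cap\Jac(\H)$, which is equivalent to $F$ being nonreal), and otherwise writes $\sums F\subseteq F^2\cdot(1+\sums\H)$ using \Cref{Minimal Value Representation} and absorbs the square of a fixed element of $(1+\sums F)\cap\Jac(\H)$ exactly as you do with $f_0^2$, before invoking \Cref{T:main}. Your element-wise computation is just the paper's set-wise inclusion argument unpacked, so there is nothing to fix.
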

\begin{proof}
Set $\mathsf{M}(\H)=  (1+\sums F)\cap\Jac(\H)$.
If $0\in\mathsf{M}(\H)$, then $F$ is nonreal, and hence $p(F)\leq s(F)+1\leq p^*(\H)+1$.
Assume now that $0\notin\mathsf{M}(\H)$.
As $\mathsf{M}(\H)\neq \emptyset$ by the hypothesis, we may fix an element $g\in \MH$. 
Then $g^2 \in F^{\times 2} \cap \MH$, whereby $F^2=F^2g^2\subseteq F^{2} \cdot \MH$.
Since $\MH \cdot (1+ \sums{\mc H}) \subseteq \MH$, we conclude that $F^2 \cdot (1+ \sums{\mc H}) \subseteq F^2 \cdot \MH \cdot (1+ \sums{\mc H}) \subseteq F^2 \cdot \MH$.
By \Cref{Minimal Value Representation} $(c)$,
we obtain that
$\sums{F}\subseteq F^2\cdot (1+\sums\H)\subseteq F^2\cdot \M(\H)$. 
Now the statement  follows by \Cref{T:main}.
\end{proof}

We will mainly consider the condition that $\H$ is square-effective in combination with the condition that $1+\H^2\subseteq\mg{\H}$.

\begin{lem}\label{Level One}
Assume that $1+\H^2 \subseteq \mg\H$.
Then $f^2+g^2\in\mg{\H}$ for every $f,g \in \mc H$ with $f\mc H+ g\mc H=\H$. 
\end{lem}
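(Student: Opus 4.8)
The plan is to exploit the fact that $\H$ is a local-type analysis on the ideal $f\H + g\H = \H$ combined with the standing hypothesis $1+\H^2\subseteq\mg\H$. First I would use the coprimality $f\H+g\H=\H$ to pick $a,b\in\H$ with $af+bg=1$. The natural move is to multiply the target $f^2+g^2$ by the square of something built from $a,b$ so that, modulo the coprimality relation, one lands inside $1+\H^2$. Concretely, one computes $(af+bg)^2 = a^2f^2 + 2abfg + b^2g^2 = 1$, and similarly one wants to produce the ``rotated'' pair; the classical trick is to observe that $(f^2+g^2)(a^2+b^2) = (af+bg)^2 + (ag-bf)^2 = 1 + (ag-bf)^2$. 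Since $ag-bf\in\H$, the right-hand side lies in $1+\H^2\subseteq\mg\H$ by hypothesis.

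From $(f^2+g^2)(a^2+b^2)\in\mg\H$ I would conclude that $f^2+g^2$ is not a zero divisor and, more to the point, that it divides a unit in $\H$: indeed $(f^2+g^2)\cdot\bigl[(a^2+b^2)\cdot\bigl(1+(ag-bf)^2\bigr)^{-1}\bigr] = 1$, so $f^2+g^2\in\mg\H$ directly, provided $1+(ag-bf)^2$ is invertible — which it is, again by the hypothesis $1+\H^2\subseteq\mg\H$. Note that this argument does not even require $a^2+b^2$ to be a unit on its own; the product identity does all the work. The only thing to check is that $f^2+g^2\neq 0$, but this is immediate since $(f^2+g^2)(a^2+b^2)=1+(ag-bf)^2$ is a unit, hence nonzero, so neither factor can vanish.

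I do not expect a serious obstacle here: the entire content is the two-squares multiplicativity identity $(f^2+g^2)(a^2+b^2)=(af+bg)^2+(ag-bf)^2$ (equivalently the norm form of $\H[i]$ over $\H$, valid in any commutative ring) together with the coprimality relation $af+bg=1$ and the hypothesis closing $1+\H^2$ under inversion. If anything needs care, it is only the bookkeeping that $a,b$ exist (immediate from $f\H+g\H=\H$) and that one may rearrange the identity to isolate $f^2+g^2$ as a unit times a unit. The lemma will later feed into verifying the square-effectiveness hypotheses in the applications, where one needs that the ``gcd'' $g_1$ produced for a pair of elements has $g_1^2$ a unit whenever the pair is coprime.
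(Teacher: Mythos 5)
Your proof is correct, but it takes a different route from the paper's. You use the B\'ezout relation $af+bg=1$ (available since $f\H+g\H=\H$) together with the two-square identity $(f^2+g^2)(a^2+b^2)=(af+bg)^2+(ag-bf)^2=1+(ag-bf)^2$, which lies in $1+\H^2\subseteq\mg{\H}$; since a divisor of a unit is a unit, $f^2+g^2\in\mg{\H}$, with the explicit inverse $(a^2+b^2)\bigl(1+(ag-bf)^2\bigr)^{-1}$. The paper instead argues contrapositively through maximal ideals: if $f^2+g^2\notin\mg{\H}$, then $f^2+g^2\in\mfm$ for some $\mfm\in\Max\H$, and if, say, $f\notin\mfm$, one picks $h$ with $fh\equiv 1\bmod\mfm$ and gets $1+(gh)^2\equiv h^2(f^2+g^2)\equiv 0\bmod\mfm$, contradicting $1+\H^2\subseteq\mg{\H}$; hence $f,g\in\mfm$ and $f\H+g\H\neq\H$. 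Your argument is more direct and constructive (no appeal to maximal ideals, and it exhibits the inverse), while the paper's residue-style argument is in the same spirit as its later reformulations in terms of levels of residue fields $\H/\mfm$. Two minor remarks: your separate check that $f^2+g^2\neq 0$ is superfluous, since being a unit already settles it; and the closing remark about how the lemma is used later is slightly off target (it feeds into showing $f_1^2g^{-2}+f_2^2g^{-2}\in\mg{\H}$ in the equivalence characterizing square-effectiveness), but this does not affect the proof itself.
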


\begin{proof}
Consider $f,g \in \H$ such that $f^2+g^2\notin\mg\H$.
Then $f^2+g^2\in\mfm$ for some $\mfm \in \Max \H$.
If $f\notin \mfm$, then there exists $h\in\H$ such that $fh\equiv 1\bmod \mfm$, whereby $1+(gh)^2\equiv h^2(f^2+g^2)\equiv 0\bmod \mfm$,  which contradicts the hypothesis that $1+\H^2\subseteq\mg{\H}$.
Therefore $f\in\mfm$, and similarly we obtain that $g\in \mfm$.
Hence $f\H+g\H\subseteq\mfm$ and in particular $f\H+g\H\neq \H$.
\end{proof}

\begin{prop}\label{Square-effective k=2}
The following are equivalent:
\begin{enumerate}[$(i)$]
\item $\mc H$ is square-effective and $1+\H^2 \subseteq \mg\H$.
\item $\H$ is a B\'ezout domain and $\mg\H \cap (\H^2+\H^2)= \sq \H(1+\H^2)$.
\end{enumerate}
\end{prop}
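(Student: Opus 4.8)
The plan is to prove the two implications separately, using \Cref{Minimal Value Representation} and \Cref{Level One} as the main tools.

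For the implication \follows{i}{ii}: assuming that $\H$ is square-effective and $1+\H^2\subseteq\mg\H$, the fact that $\H$ is a B\'ezout domain is exactly \Cref{Minimal Value Representation} $(a)$. For the identity $\mg\H\cap(\H^2+\H^2)=\sq\H(1+\H^2)$, the inclusion $\supseteq$ is routine: if $x\in\mg\H$ and $h\in\H$, then $x^2(1+h^2)=x^2+(xh)^2\in\H^2+\H^2$, and it lies in $\mg\H$ since $1+h^2\in\mg\H$ by hypothesis and $x\in\mg\H$. For the inclusion $\subseteq$, take $a\in\mg\H\cap(\H^2+\H^2)$, say $a=f_1^2+f_2^2$ with $f_1,f_2\in\H$ (here I would first observe that since $a\in\H$, one can actually take $f_1,f_2\in\H$ — square-effectiveness lets us replace any $F$-representation by one with $f_1\H+f_2\H=g_1\H\supseteq g_2\H$, and since the common value is in $\H$ one gets the representatives in $\H$). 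Apply square-effectiveness to get $g_1,g_2\in\H$ with $a=g_1^2+g_2^2$ and $f_1\H+f_2\H=g_1\H\supseteq g_2\H$. Then $g_1\mid g_2$ in $\H$, so write $g_2=g_1h$ with $h\in\H$, giving $a=g_1^2(1+h^2)$. Since $a\in\mg\H$ and $1+h^2\in\mg\H$, we get $g_1^2\in\mg\H$, hence $g_1\in\mg\H$, and $a\in\sq\H(1+\H^2)$.

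For the implication \follows{ii}{i}: assume $\H$ is a B\'ezout domain and $\mg\H\cap(\H^2+\H^2)=\sq\H(1+\H^2)$. First I would derive $1+\H^2\subseteq\mg\H$: given $h\in\H$, we have $1+h^2=1^2+h^2\in\H^2+\H^2$; I need to see it is a unit. If $1+h^2\notin\mg\H$ then $1+h^2\in\mfm$ for some $\mfm\in\Max\H$, so $h^2\equiv-1\bmod\mfm$, meaning $-1$ is a square in $\H/\mfm$, whence $\H/\mfm$ is nonreal of level $1$. This does not immediately contradict anything, so instead I would argue via the identity: pick any $x\in\H\setminus\mfm$; then by the hypothesis every element of $\mg\H\cap(\H^2+\H^2)$ is a square times $(1+h'^2)$, and I want to produce a unit in $\H^2+\H^2$ lying in $\mfm$, which is impossible. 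Actually the cleanest route: suppose $1+h^2\in\mfm$. Consider $2=1^2+1^2\in\H^2+\H^2$; since $-1\equiv h^2\bmod\mfm$ one checks $2\notin\mfm$ (else $\mathsf{char}(\H/\mfm)=2$, but then from $1+h^2\in\mfm$... this needs the residue characteristic). The safest general argument is: since $(1+h^2)+(something)$... — rather than belabor this, the key point is that from the structural identity $\mg\H\cap(\H^2+\H^2)=\sq\H(1+\H^2)$ one shows $1+\H^2\subseteq\mg\H$ by noting that $\sq\H(1+\H^2)\subseteq\mg\H$ forces, taking the element $1\cdot(1+h^2)$, that $1+h^2\in\mg\H$ provided we already know $1+h^2\in\H^2+\H^2$, which it is — \emph{but} this reasoning presumes $1+h^2$ is in the left-hand set, i.e.\ is a unit, which is what we want. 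So the honest argument must use that the right-hand side is \emph{all} of $\mg\H\cap(\H^2+\H^2)$: if $1+h_0^2$ were a non-unit, then $\mg\H\cap(\H^2+\H^2)\subsetneq\H^2+\H^2$ properly, and one uses B\'ezout-ness to derive a contradiction by scaling; I expect this to be the one spot requiring genuine care. Having established $1+\H^2\subseteq\mg\H$, I then prove square-effectiveness: given $f_1,f_2\in F$, clearing denominators reduces to $f_1,f_2\in\H$ with $f_1\H+f_2\H$ principal (B\'ezout), say $=d\H$; write $f_i=d e_i$ with $e_1\H+e_2\H=\H$. By \Cref{Level One}, $e_1^2+e_2^2\in\mg\H$, so $e_1^2+e_2^2\in\mg\H\cap(\H^2+\H^2)=\sq\H(1+\H^2)$, giving $e_1^2+e_2^2=u^2(1+h^2)$ with $u\in\mg\H$, $h\in\H$. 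Set $g_1=du$, $g_2=duh$; then $g_1^2+g_2^2=d^2u^2(1+h^2)=d^2(e_1^2+e_2^2)=f_1^2+f_2^2$, and $g_1\H=du\H=d\H=f_1\H+f_2\H$ while $g_2\H=duh\H\subseteq d\H=g_1\H$.

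The main obstacle is the derivation of $1+\H^2\subseteq\mg\H$ from $(ii)$: one must exploit that the identity in $(ii)$ is an \emph{equality}, not just the inclusion $\sq\H(1+\H^2)\subseteq\mg\H$, together with B\'ezout-ness, to rule out a non-unit of the form $1+h^2$. Everything else is bookkeeping with principal ideals and the square-effectiveness axiom. I would also take care, in the \follows{i}{ii} direction, to justify passing from an $F$-representation $a=f_1^2+f_2^2$ to representatives in $\H$ when $a\in\H$; this again uses square-effectiveness plus integral closedness of the B\'ezout domain $\H$, exactly as in the proof of \Cref{Minimal Value Representation} $(d)$.
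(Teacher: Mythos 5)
Your overall strategy coincides with the paper's: in \follows{i}{ii} you apply square-effectiveness to a representation $a=f_1^2+f_2^2$ with $f_1,f_2\in\H$ and factor $a=g_1^2\bigl(1+(g_1^{-1}g_2)^2\bigr)$, and in \follows{ii}{i} you combine B\'ezout-ness, \Cref{Level One} and the displayed equality to produce the required $g_1,g_2$; this is essentially the paper's argument (your preliminary reduction to $f_1,f_2\in\H$ by clearing denominators replaces the paper's direct division by a generator of $f_1\H+f_2\H$, which is immaterial, except that you should also dispose of the trivial case $f_1=f_2=0$). Your parenthetical worry in \follows{i}{ii} about passing from an $F$-representation to an $\H$-representation is unnecessary: $a\in\H^2+\H^2$ means by definition that $a$ is a sum of two squares of elements of $\H$.

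However, as written your proof of \follows{ii}{i} has a genuine hole: you never establish $1+\H^2\subseteq\mg\H$, which you need before you may invoke \Cref{Level One}; you explicitly leave it as ``the one spot requiring genuine care'' and gesture at a contradiction ``by scaling''. In fact the step is immediate, and your objection rests on a misreading of the hypothesis. For $h\in\H$ one has $1+h^2=1^2\cdot(1+h^2)\in\sq\H(1+\H^2)$, because membership in the set $\sq\H(1+\H^2)=\{u^2(1+g^2)\mid u\in\mg\H,\ g\in\H\}$ requires nothing about $1+h^2$ being a unit -- only that $1\in\mg\H$. Since $(ii)$ asserts the equality of sets $\sq\H(1+\H^2)=\mg\H\cap(\H^2+\H^2)$, membership in the right-hand side transfers to the left-hand side, so $1+h^2\in\mg\H$ at once. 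No B\'ezout-ness and no contradiction argument is needed; this is exactly why the paper dismisses this step with ``clearly''. With this one-line fix inserted, the rest of your argument goes through and matches the paper's proof.
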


\begin{proof} 
\follows{i}{ii}
Condition $(i)$ clearly implies that  $\sq \H(1+\H^2)\subseteq\mg\H \cap (\H^2+\H^2)$ and, by \Cref{Minimal Value Representation}, that $\H$ is a B\'ezout domain. It remains to show that $\mg\H\cap(\H^2+\H^2)\subseteq \sq \H(1+\H^2)$. Consider $f \in  \mg\H\cap(\H^2+\H^2)$.
Since $\H$ is square-effective, there exist $g_1,g_2 \in F$ such that $f=g_1^2+g_2^2$ and $g_1 \mc H \supseteq g_2\H$. 
Then $f=g_1^2(1+(g_1^{-1}g_2)^2)$, and $g_1^{-1}g_2\in \H$.
Since $1+\H^2 \subseteq \mg\H$, we have that $f\in g_1^2\mg \H \cap \mg{\H}$, whereby $g_1^2\in \mg \H$.
Since $\H$ is integrally closed, we conclude that
$g_1\in\mg{\H}$, whereby $f=g_1^2(1+(g_1^{-1}g_2)^2)\in\sq{\H}\cdot (1+\H^2)$.

\follows{ii}{i}
Assume that $\H$ is a B\'ezout domain and $\mg\H \cap (\H^2+\H^2)= \sq \H(1+\H^2)$.
Then clearly $1+\H^2 \subseteq \mg\H$.
It remains to show that $\H$ is square-effective.

Consider $f_1,f_2 \in F$, not both equal to zero.
Since  $\H$ is a B\'ezout domain and its fraction field is $F$,
there exists $g\in\mg{F}$ such that $f_1\H+f_2\H=g\H$.
Then $f_1g^{-1}, f_2g^{-1} \in \mc H$ and $\mc H = f_1g^{-1}\mc H+ f_2g^{-1}\mc H$.
Since $1+\H^2 \subseteq \mg\H$, it follows by \Cref{Level One} that $f_1^2g^{-2}+ f_2^2g^{-2} \in \mg{\mc H}$. 
Since $\mg\H \cap (\H^2+\H^2)\subseteq \sq \H(1+\H^2)$, we may choose $l_1 \in \mc H^{\times}$, $l_2 \in \mc H$ such that $f_1^2g^{-2}+ f_2^2g^{-2}=l_1^2+l_2^2$.
Letting $g_1=gl_1$ and $g_2=gl_2$, we obtain that $f_1^2+f_2^2=g_1^2+g_2^2$ and $f_1\mc H+ f_2\mc H=g_1\H \supseteq g_2\H$.
This shows that $\H$ is square-effective.
\end{proof}

\begin{lem}\label{Value Zero}
Assume that $1+\H^2 \subseteq \mg\H$. Let $\mc{M}$ be a finite subset of $\Max \H$ and  $f_1,f_2\in \H$ such that $f_1^2+f_2^2 \notin \bigcup\mc{M}$. 
Then there exist $g_1\in \H\setminus\bigcup\mc{M}$ and $g_2\in \H$ such that $f_1^2+f_2^2=g_1^2+g_2^2$.
\end{lem}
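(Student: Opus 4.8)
The plan is to produce the new representation by a rational rotation of $(f_1,f_2)$, using that the binary form $x^2+y^2$ is multiplicative. For any $t\in\H$ the hypothesis gives $1+t^2\in 1+\H^2\subseteq\mg\H$, so
$$g_1=\frac{(1-t^2)f_1-2tf_2}{1+t^2},\qquad g_2=\frac{2tf_1+(1-t^2)f_2}{1+t^2}$$
are well-defined elements of $\H$; combining the two-squares identity $(af_1-bf_2)^2+(bf_1+af_2)^2=(a^2+b^2)(f_1^2+f_2^2)$ (with $a=1-t^2$, $b=2t$) and $(1-t^2)^2+(2t)^2=(1+t^2)^2$, one finds $g_1^2+g_2^2=f_1^2+f_2^2$ for every $t$. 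Hence the whole problem reduces to choosing $t\in\H$ with $g_1\notin\mfm$ for all $\mfm\in\mc M$; since $1+t^2$ is a unit, this is the same as requiring $(1-t^2)f_1-2tf_2\notin\mfm$ for each $\mfm\in\mc M$. (If $\mc M=\emptyset$ there is nothing to do: take $t=0$, i.e.\ $g_1=f_1$, $g_2=f_2$.)

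First I would observe that $2=1+1^2\in\mg\H$, so each residue field $\H/\mfm$ has characteristic $\neq 2$. Fix $\mfm\in\mc M$ and write $\bar a$ for the image of $a\in\H$ in $\H/\mfm$. Since $f_1^2+f_2^2\notin\mfm$, the images $\bar f_1,\bar f_2$ are not both $0$. The polynomial $(1-T^2)\bar f_1-2T\bar f_2\in(\H/\mfm)[T]$ has value $\bar f_1$ at $T=0$ and value $-2\bar f_2$ at $T=1$; because $2$ is a unit and $(\bar f_1,\bar f_2)\neq(0,0)$, at least one of these two values is nonzero. Thus there exists $\varepsilon_\mfm\in\{0,1\}$ with $(1-\varepsilon_\mfm^2)f_1-2\varepsilon_\mfm f_2\notin\mfm$.

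Then I would pass to a global choice: the finitely many distinct maximal ideals in $\mc M$ are pairwise comaximal, so by the Chinese Remainder Theorem there is $t\in\H$ with $t\equiv\varepsilon_\mfm\pmod{\mfm}$ for every $\mfm\in\mc M$. For this $t$ the displayed formulas give $g_1,g_2\in\H$ with $f_1^2+f_2^2=g_1^2+g_2^2$. Reducing the relation $(1+t^2)g_1=(1-t^2)f_1-2tf_2$ modulo an arbitrary $\mfm\in\mc M$, the right-hand side equals $(1-\varepsilon_\mfm^2)\bar f_1-2\varepsilon_\mfm\bar f_2\neq 0$ by the choice of $\varepsilon_\mfm$, hence $\bar g_1\neq 0$, i.e.\ $g_1\notin\mfm$. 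Since this holds for all $\mfm\in\mc M$, we conclude $g_1\in\H\setminus\bigcup\mc M$, which completes the argument.

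The argument has no real obstacle; the only point that needs a little care is the local step, namely showing that for each $\mfm\in\mc M$ the rotation parameter $t$ can be chosen so that the first coordinate does not vanish modulo $\mfm$. It is here that the hypothesis $1+\H^2\subseteq\mg\H$ is used, in two ways: to keep $1+t^2$ invertible, so that $g_1$ and $g_2$ remain in $\H$, and — through $2\in\mg\H$ — to exclude residue characteristic $2$ and make the choice $t\in\{0,1\}$ always available.
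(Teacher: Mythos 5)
Your proof is correct and follows essentially the same route as the paper: the rotation $\bigl((1-t^2)f_1-2tf_2,\ 2tf_1+(1-t^2)f_2\bigr)$ divided by the unit $1+t^2$, a local choice of the parameter modulo each $\mfm\in\mc M$, and the Chinese Remainder Theorem to glue. The only (inessential) difference is the local step: the paper notes that the reduced polynomial is nonzero of degree at most $2$ over a residue field with more than two elements, while you simply test the values at $t=0$ and $t=1$ using $2\in\mg\H$.
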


\begin{proof}
Consider $G_1=(X^2-1) f_1+2X f_2$ and $G_2=2Xf_1+(1-X^2)f_2$ in $\mc{H}[X]$ and observe that $G_1^2+G_2^2=(1+X^2)^2\cdot(f_1^2+f_2^2)$.
Consider $\mfm\in M$.
Since $1+\mc{H}^2\subseteq\mg{\mc{H}}$, we have $\mathsf{char}(\mc{H}/\mfm)\neq 2$, whereby $|\mc{H}/\mfm|>2$. Hence there exists $x_{\mfm}\in\mc{H}$ such that $G_1(x_{\mfm})\notin \mfm$.
Since $\mc{M}$ is finite, by the Chinese Remainder Theorem, there exists $x\in\mc{H}$ with $x\equiv x_{\mfm}\bmod \mfm$ for all $\mfm\in M$, whereby $G_1(x)\in \mc{H}\setminus \bigcup 
\mc{M}$.
We now set $g_i=(1+x^2)^{-1}G_i(x)$ for $i=1,2$ to obtain the desired conclusion.
\end{proof}

\begin{prop}\label{Semilocal case}
Assume that $\H$ is semilocal and $1+\H^2 \subseteq \mg\H$. Then  $$\mg\H\cap(\H^2+\H^2)   =\sq \H(1+\H^2)\,.$$
\end{prop}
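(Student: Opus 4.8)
The plan is to prove the nontrivial inclusion $\mg\H\cap(\H^2+\H^2)\subseteq\sq\H(1+\H^2)$; the reverse inclusion is immediate since $1+\H^2\subseteq\mg\H$ by hypothesis. First I would take $f\in\mg\H\cap(\H^2+\H^2)$ and write $f=f_1^2+f_2^2$ with $f_1,f_2\in\H$. Since $f$ is a unit, $f\notin\bigcup\Max\H$, and because $\H$ is semilocal, $\Max\H$ is finite. So I can apply \Cref{Value Zero} with $\mc M=\Max\H$: there exist $g_1\in\H\setminus\bigcup\Max\H=\mg\H$ and $g_2\in\H$ with $f_1^2+f_2^2=g_1^2+g_2^2$.

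Now $f=g_1^2+g_2^2=g_1^2\big(1+(g_1^{-1}g_2)^2\big)$, and $g_1^{-1}g_2\in\H$ since $g_1\in\mg\H$. Hence $f=g_1^2\cdot h$ with $g_1^2\in\sq\H$ and $h=1+(g_1^{-1}g_2)^2\in 1+\H^2\subseteq\mg\H$. This exhibits $f\in\sq\H(1+\H^2)$, completing the argument.

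The work here is genuinely light — the real content is packed into \Cref{Value Zero}, which produces a representation $f_1^2+f_2^2=g_1^2+g_2^2$ in which the first term avoids a prescribed finite set of maximal ideals. The only point requiring care is to confirm that the hypotheses of \Cref{Value Zero} are met: we need $f_1^2+f_2^2\notin\bigcup\Max\H$, which holds precisely because $f\in\mg\H$, and we need $\Max\H$ to be a \emph{finite} subset of $\Max\H$, which is exactly the semilocality assumption. I anticipate no serious obstacle; if anything, the subtle step is recognising that \Cref{Value Zero} already gives the unitness of $g_1$ for free (via $g_1\in\H\setminus\bigcup\Max\H$, and $\Jac(\H)=\bigcap\Max\H$ so an element outside every maximal ideal is a unit), so that no appeal to integral closure is needed, in contrast to the proof of the implication \follows{i}{ii} in \Cref{Square-effective k=2}.
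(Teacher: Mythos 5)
Your proof is correct and follows essentially the same route as the paper: both apply \Cref{Value Zero} with $\mc M=\Max\H$ (using semilocality for finiteness and $f\in\mg\H$ to have $f_1^2+f_2^2\notin\bigcup\Max\H$), then factor $f=g_1^2(1+(g_1^{-1}g_2)^2)$ with $g_1\in\H\setminus\bigcup\Max\H=\mg\H$, and note the reverse inclusion is immediate from $1+\H^2\subseteq\mg\H$. No gaps.
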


\begin{proof}
Let $f_1,f_2 \in \mc H$ with $f_1^2+f_2^2\in \mc H^{\times}$. 
We have $\H\setminus\bigcup\Max\H=\mg{\H}$, and since by the hypothesis $\Max\H$ is finite, we can apply
 \Cref{Value Zero} to choose $g_1\in\mg{\H}$ and $g_2\in \H$ such that $f_1^2+f_2^2=g_1^2+g_2^2=g_1^2(1+(g_1^{-1}g_2)^2)\in\sq{\H}\cdot(1+\H^2)$.
 This shows that $\mg\H\cap(\H^2+\H^2)  \subseteq \sq{\H}\cdot(1+\H^2)$.
 The opposite inclusion is obvious, because $1+\H^2 \subseteq \mg\H$.
\end{proof}

\begin{cor}\label{Semilocal square-effective}
Assume that $\H$ is a semilocal B\'ezout domain with $1+\H^2 \subseteq \mg\H$.
Then $\H$ is square-effective. 
\end{cor}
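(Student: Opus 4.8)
The goal is to show that a semilocal B\'ezout domain $\H$ with $1+\H^2\subseteq\mg\H$ is square-effective. By \Cref{Square-effective k=2}, since $\H$ is already a B\'ezout domain, it suffices to verify that $\mg\H\cap(\H^2+\H^2)=\sq\H(1+\H^2)$. But this equality is precisely the content of \Cref{Semilocal case}, whose hypotheses ($\H$ semilocal and $1+\H^2\subseteq\mg\H$) are exactly what we are assuming here.

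So the plan is simply to chain the two previous results: first invoke \Cref{Semilocal case} to get $\mg\H\cap(\H^2+\H^2)=\sq\H(1+\H^2)$, then feed this together with the B\'ezout hypothesis into the implication \follows{ii}{i} of \Cref{Square-effective k=2} to conclude that $\H$ is square-effective. There is no real obstacle; the work has all been done in the lemmas, and the corollary is just the assembly step. The only thing to be careful about is checking that the hypotheses line up verbatim, which they do.

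Here is the proof:

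\begin{proof}
By \Cref{Semilocal case}, the hypotheses that $\H$ is semilocal and $1+\H^2\subseteq\mg\H$ imply that $\mg\H\cap(\H^2+\H^2)=\sq\H(1+\H^2)$. Together with the hypothesis that $\H$ is a B\'ezout domain, this shows that condition $(ii)$ of \Cref{Square-effective k=2} is satisfied. Hence, by the implication \follows{ii}{i} of \Cref{Square-effective k=2}, $\H$ is square-effective.
\end{proof}
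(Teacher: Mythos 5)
Your proof is correct and follows exactly the paper's own argument: the paper also deduces the corollary by combining \Cref{Semilocal case} with the implication \follows{ii}{i} of \Cref{Square-effective k=2}. Nothing further is needed.
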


\begin{proof}
This follows by \Cref{Semilocal case} and \Cref{Square-effective k=2}.
\end{proof}

\section{Semilocal B\'ezout rings} 

Let $n\in\nat^+$ and let $F$ be a field of characteristic different from $2$.
In this section we present a technique to search for a subring $\H$ of $F$ satisfying the hypotheses of \Cref{Semilocal square-effective} and \Cref{C:square-effective-Mnotempty} and such that $p^\ast(\H)\leq 2^n$, so as to show that  $p(F)\leq 2^n+1$.
\medskip

By a \emph{B\'ezout ring of $F$} we mean a subring of $F$ which is a B\'ezout domain and whose fraction field is $F$.
Semilocal B\'ezout rings of $F$ are naturally related to valuation rings of $F$.

\begin{prop}\label{Intersection VRs}
Let $\mc H$ be a subring of $F$ with fraction field $F$.
Then the following are equivalent:
\begin{enumerate}[$(i)$]
\item $\mc{H}$ is a semilocal B\'ezout ring of $F$.
\item  $\mc{H}$ is a finite intersection of valuation rings of $F$.
\end{enumerate}
\end{prop}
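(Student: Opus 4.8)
\textbf{Plan for proving Proposition~\ref{Intersection VRs}.}

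The plan is to prove both implications directly, treating the two directions quite differently in character. For the implication \follows{ii}{i}, suppose $\H = V_1 \cap \dots \cap V_m$ with each $V_j$ a valuation ring of $F$. After discarding redundant rings, I would assume that no $V_j$ contains another, so that the $V_j$ are the localisations of $\H$ at its maximal ideals; this is the standard theory of finite intersections of valuation rings with the same fraction field (one may also invoke that $\H$ is a Pr\"ufer domain with finitely many maximal ideals, hence semilocal, and that a semilocal Pr\"ufer domain is B\'ezout). Concretely, given a finitely generated ideal $I = a_1\H + \dots + a_r\H$, for each $j$ the valuation ring $V_j$ sees some $a_{i(j)}$ of minimal value among the $a_i$, so $IV_j = a_{i(j)}V_j$ is principal; using the Chinese Remainder Theorem / approximation across the finitely many maximal ideals of $\H$ one produces a single generator $a\in I$ with $aV_j = IV_j$ for all $j$, and then $a\H = \bigcap_j aV_j \cap F = \bigcap_j IV_j\cap F = I$. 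Hence $\H$ is a B\'ezout domain, and it is semilocal because its maximal ideals correspond bijectively to the $V_j$.

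For the converse \follows{i}{ii}, assume $\H$ is a semilocal B\'ezout ring of $F$ with maximal ideals $\mfm_1,\dots,\mfm_m$. The claim is that each localisation $\H_{\mfm_j}$ is a valuation ring of $F$ and that $\H = \bigcap_{j=1}^m \H_{\mfm_j}$. The first point is the key local fact: a local B\'ezout domain is a valuation ring, because for any $a,b$ the ideal $a\H + b\H$ is principal, say generated by $d$, and writing $a = d u$, $b = d v$ with $u\H + v\H = \H$, one of $u,v$ must be a unit in the local ring $\H_{\mfm_j}$ (since $u\H_{\mfm_j}+v\H_{\mfm_j}=\H_{\mfm_j}$ and the non-units form the unique maximal ideal), so $a \mid b$ or $b \mid a$ in $\H_{\mfm_j}$; this gives total comparability of principal ideals, i.e.\ $\H_{\mfm_j}$ is a valuation ring, and its fraction field is $F$ since that of $\H$ is. The equality $\H = \bigcap_{j} \H_{\mfm_j}$ is the general fact that a commutative ring equals the intersection of its localisations at maximal ideals, valid inside $F$ because $\H$ is a domain.

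\textbf{Main obstacle.} The routine set-theoretic inclusions and the ``intersection of localisations'' identity are harmless; the genuine content is the passage between the global B\'ezout/semilocal hypothesis and the local valuation-ring description. In the direction \follows{ii}{i} the delicate point is producing a \emph{single} generator for a finitely generated ideal simultaneously at all $m$ valuation rings — this is exactly where finiteness of the index set is used and where a careful approximation argument (Chinese Remainder Theorem modulo suitable powers, or the standard lemma on independence of the valuations after removing containments) is needed; if the $V_j$ are not pairwise incomparable one must first reduce to that case. I expect this to be the step requiring the most care, while the local B\'ezout $\Rightarrow$ valuation ring argument, though the conceptual heart of the other direction, is short once one observes that in a local ring one of two comaximal elements is a unit.
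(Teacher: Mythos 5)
Your argument is correct and follows essentially the same route as the paper: the paper handles \follows{i}{ii} by citing Kaplansky (localisations of a B\'ezout domain at maximal ideals are valuation rings, and a domain is the intersection of these localisations), and handles \follows{ii}{i} by citing Kaplansky's theorem on finite intersections of valuation rings — exactly the standard facts you prove or invoke. The only difference is that you sketch proofs of these cited results (the local B\'ezout $\Rightarrow$ valuation ring argument and the single-generator approximation), which is fine but adds no new ideas beyond the references used in the paper.
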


\begin{proof}
\follows{i}{ii} 
Assume that $\H$ is a B\'ezout ring. Then for each $\mfm\in\Max\H$,  $\H_\mfm$ is a valuation ring of $F$, by \cite[Theorem 64]{Kap74}. Furthermore $\H=\bigcap_{\mfm\in \Max\H}\H_\mfm$, by \cite[Theorem 53]{Kap74}, and if $\H$ is semilocal, then this is a finite intersection.

\follows{ii}{i} See \cite[Theorem 107]{Kap74}. 
\end{proof}

We call a semilocal B\'ezout domain $\H$ \emph{square-effective of order $n$} if 
$$1+\sos{2^n-1}{\H} \subseteq \mg\H\quad\mbox{ and }\quad(1+\sos{2^n}{\H})\cap \Jac(\H)\neq \emptyset\,.$$
By \Cref{Semilocal square-effective}, the first of the two conditions implies  that $\H$ is square-effective. 
We can reformulate each of the two conditions in terms of levels of residue fields.

\begin{prop}\label{Levels of residues}
Let $\mc H$ be a domain and $k\in\nat^+$.
Then:
\begin{enumerate}[$(a)$]
\item $1+\sos{k-1}{\H} \subseteq \mg\H$ if and only if $s(\H/\mfm)\geq k$ for every $\mfm \in \Max\H$.
\item If $(1+\sos{k}{\H})\cap \Jac(\H)\neq \emptyset$, then $s(\H/\mfm)\leq k$ for every $\mfm \in \Max\H$.
\item If $\H$ is semilocal and such that $s(\H/\mfm)\leq k$ for every $\mfm \in \Max\H$, then $(1+\sos{k}{\H})\cap \Jac(\H)\neq \emptyset$.
\end{enumerate}
\end{prop}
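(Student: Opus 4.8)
The plan is to treat each of the three items separately, in each case passing through the residue field $\H/\mfm$ and exploiting that for a domain $\H$ the reduction map $\H \to \H/\mfm$ is a ring homomorphism, so it carries sums of squares to sums of squares and units to units.

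For $(a)$, I would argue by contraposition in both directions. Suppose first that $s(\H/\mfm) < k$ for some $\mfm \in \Max\H$, say $s(\H/\mfm) = j \leq k-1$; then $-1$ is a sum of $j \leq k-1$ squares in $\H/\mfm$, so there is an element $h \in 1 + \sos{k-1}{\H}$ (pad with zero squares if $j<k-1$) with $h \equiv 0 \bmod \mfm$, hence $h \notin \mg\H$. Conversely, if $1+\sos{k-1}{\H} \not\subseteq \mg\H$, pick $h = 1 + a_1^2 + \dots + a_{k-1}^2 \in \mfm$ for some $\mfm\in\Max\H$; reducing mod $\mfm$ gives $-1 = \bar a_1^2 + \dots + \bar a_{k-1}^2$ in $\H/\mfm$, so $s(\H/\mfm) \leq k-1 < k$. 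This is the routine direction.

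For $(b)$, take $h \in (1+\sos{k}{\H}) \cap \Jac(\H)$, write $h = 1 + a_1^2 + \dots + a_k^2$, and fix any $\mfm \in \Max\H$. Since $\Jac(\H) = \bigcap \Max\H$, we have $h \in \mfm$, so reducing modulo $\mfm$ yields $-1 = \bar a_1^2 + \dots + \bar a_k^2$ in $\H/\mfm$, i.e.\ $s(\H/\mfm) \leq k$.

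Item $(c)$ is the only part requiring the semilocal hypothesis, and it is where I expect the only real (though still mild) work. Write $\Max\H = \{\mfm_1,\dots,\mfm_r\}$. For each $i$, since $s(\H/\mfm_i)\leq k$, there is an element $h_i \in 1+\sos{k}{\H}$ with $h_i \in \mfm_i$ — concretely, lift a representation $-1 = \sum_{j=1}^{k} \bar b_{ij}^2$ in $\H/\mfm_i$ to $h_i = 1 + \sum_j b_{ij}^2 \in \mfm_i$. The task is to produce a single element of $1+\sos{k}{\H}$ lying in \emph{all} the $\mfm_i$ simultaneously. I would do this by the Chinese Remainder Theorem: the $\mfm_i$ are pairwise comaximal, so choose $e_i \in \H$ with $e_i \equiv 1 \bmod \mfm_i$ and $e_i \equiv 0 \bmod \mfm_j$ for $j\neq i$, and then consider $h = 1 + \sum_{i=1}^{r} e_i^2(h_i - 1)$. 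Modulo $\mfm_i$ this is $1 + (h_i - 1) = h_i \equiv 0$, so $h \in \Jac(\H) = \bigcap_i \mfm_i$; and $h - 1 = \sum_i e_i^2(h_i-1)$ is a sum of squares in $\H$, being a sum of products $(e_i a)^2$ over the square terms $a^2$ appearing in $h_i - 1$. The one point to be careful about is the \emph{number} of squares: naively $h-1$ is a sum of $rk$ squares, not $k$. The fix is that for any commutative ring, the reduction map $\H \to \prod_i \H/\mfm_i$ is surjective (CRT), and in the product ring $-1$ is a sum of $k$ squares (coordinatewise, since each $s(\H/\mfm_i)\leq k$); lifting such a representation $(-1,\dots,-1) = \sum_{j=1}^{k} \bar c_j^{\,2}$ along the surjection $\H \to \prod_i\H/\mfm_i$ gives $c_1,\dots,c_k \in \H$ with $1 + c_1^2 + \dots + c_k^2 \equiv 0$ in every $\H/\mfm_i$, i.e.\ $1 + \sum_j c_j^2 \in \bigcap_i\mfm_i = \Jac(\H)$, and this element lies in $(1+\sos{k}{\H})\cap\Jac(\H)$, which is therefore nonempty. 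So the main obstacle — keeping the square-count at $k$ — dissolves once one applies CRT to the residue \emph{ring} $\prod_i \H/\mfm_i$ rather than to each $\H/\mfm_i$ one at a time.
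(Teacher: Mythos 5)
Your proposal is correct and takes essentially the same route as the paper: parts $(a)$ and $(b)$ by direct reduction modulo maximal ideals, and part $(c)$ by lifting a length-$k$ representation of $-1$ through the Chinese Remainder surjection $\H \to \prod_{\mfm\in\Max\H} \H/\mfm$, which is exactly the paper's choice of elements $f_i \equiv f_{\mfm,i} \bmod \mfm$ for all $\mfm$. The intermediate detour with the elements $e_i$ (which would give $rk$ squares) is a false start that you yourself correctly discard, so the final argument matches the published one.
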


\begin{proof}
Parts $(a)$ and $(b)$ follow immediately from the definition of the level and from the general facts that $\mg{\H}=\H\setminus \bigcup\Max{\H}$ and $\Jac(\H)=\bigcap\Max{\H}$.

$(c)$ Assume that $\H$ is semilocal with $s(\H/\mfm)\leq k$ for every $\mfm \in \Max\H$.
For $\mfm \in \Max\H$, we choose $f_{\mfm,1},\dots, f_{\mfm,k} \in \H$ with $ 1+f_{\mfm,1}^2+ \dots + f_{\mfm,k}^2\in \mfm$.
By the Chinese Remainder Theorem,  for $1\leq i\leq k$ we  find $f_i \in \H$ such that $f_i\equiv f_{\mfm,i} \bmod \mfm$ for all $\mfm\in\Max\H$.
Then $1+f_1^2+\dots +f_k^2\in \bigcap \Max\H=\Jac(\H)$. 
Hence $(1+\sos{k}{\H})\cap \Jac(\H)\neq \emptyset$.
\end{proof}

We say that $F$ is \emph{$n$-effective} if 
$F$ has a semilocal B\'ezout ring $\H$ which is square-effective of order $n$ and such that $p^*(\H)\leq 2^n$.

\begin{ex}\label{EX:s=p-n-effective}
Note that $F$ is square-effective of order $n$ if and only if $s(F)=2^n$.
Therefore, if we have that $s(F)=p(F)=2^n$, then $F$ is trivially $n$-effective.
\end{ex}

\begin{prop}\label{Bound for k-effective}
Let $n\in \nat^+$. 
If $F$ is $n$-effective, then $2^n\leq p(F)\leq 2^n+1$.
\end{prop}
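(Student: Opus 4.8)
The plan is to invoke the machinery developed in the preceding two sections. Suppose $F$ is $n$-effective, so by definition there is a semilocal B\'ezout ring $\H$ of $F$ which is square-effective of order $n$ and satisfies $p^*(\H)\leq 2^n$. First I would unpack \emph{square-effective of order $n$}: it means $1+\sos{2^n-1}{\H}\subseteq\mg\H$ and $(1+\sos{2^n}{\H})\cap\Jac(\H)\neq\emptyset$. The first condition, together with \Cref{Semilocal square-effective}, gives that $\H$ is square-effective in the sense of the earlier definition. The second condition is stronger than, and in particular implies, that $(1+\sums F)\cap\Jac(\H)\neq\emptyset$, since $\sos{2^n}{\H}\subseteq\sums F$.

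With these two facts in hand, \Cref{C:square-effective-Mnotempty} applies directly and yields $p(F)\leq p^*(\H)+1\leq 2^n+1$, which is the upper bound. So the substantive work is entirely in assembling the hypotheses of \Cref{C:square-effective-Mnotempty} from the definition of $n$-effectivity, and this is essentially bookkeeping; there is no real obstacle on the upper-bound side.

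For the lower bound $2^n\leq p(F)$, I would argue as follows. Because $1+\sos{2^n-1}{\H}\subseteq\mg\H$, \Cref{Levels of residues}$(a)$ gives $s(\H/\mfm)\geq 2^n$ for every $\mfm\in\Max\H$, so in particular some residue field $\H/\mfm$ has level at least $2^n$ (note $\Max\H\neq\emptyset$). On the other hand, picking any element of $(1+\sos{2^n}{\H})\cap\Jac(\H)$ and reducing modulo $\mfm$ shows $-1\in\sos{2^n}{\H/\mfm}$, i.e.\ $s(\H/\mfm)\leq 2^n$; hence $s(\H/\mfm)=2^n$ exactly, for every $\mfm$. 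Now I claim $p(F)\geq 2^n$. If $F$ is nonreal, then $s(F)\leq p(F)$ and it suffices to show $s(F)\geq 2^n$; but a sum-of-squares expression for $-1$ in $F$ of length $<2^n$ would, after clearing denominators via the B\'ezout property (using square-effectivity, \Cref{Minimal Value Representation}$(b)$, to realise the common value with coefficients in $\H$ whose ideal is all of $\H$, hence with some coefficient a unit), reduce modulo a suitable $\mfm$ to a representation of $-1$ as a sum of $<2^n$ squares in $\H/\mfm$, contradicting $s(\H/\mfm)=2^n$. If $F$ is real, then $-1\in(1+\sos{2^n}{\H})\cap\Jac(\H)$ would force $F$ nonreal, so in fact $0\notin(1+\sos{2^n}{\H})\cap\Jac(\H)$; pick $g$ in this set, so $g\in\sums F$, $g\neq 0$, and $g$ reduces to $0$ in each $\H/\mfm$. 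Then $g$ is a sum of squares in $F$ which cannot be a sum of fewer than $2^n$ squares: writing $g=g_1^2+\dots+g_k^2$ with $k<2^n$, \Cref{Minimal Value Representation}$(b)$ lets us assume $g_i\in\H$ with $g_1\H+\dots+g_k\H=g_1\H$; since $g=g_1^2(1+\cdots)$ and reducing mod $\mfm$ kills $g$ while $1+\sos{k-1}{\H/\mfm}\subseteq\mg{\H/\mfm}$ (as $k-1<2^n\leq s(\H/\mfm)$), we get $g_1\in\mfm$ for every $\mfm$, so $g_1\in\Jac(\H)$, whence $1-4g_1\in\mg\H$ and a short computation as in the proof of \Cref{T:main} shows $g\in\sos{k}F$ only if... — in any case one derives a contradiction with $g$ mapping to $0$ in all residue fields together with the level being exactly $2^n$.

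The step I expect to require the most care is this lower-bound argument: transferring a short sum-of-squares identity in $F$ down to the residue fields $\H/\mfm$ in a way that preserves the number of squares. The clean route is to use \Cref{Minimal Value Representation}$(b)$ to put the identity in a ``minimal'' form with all entries in $\H$ and the first entry generating the value ideal, so that after reduction the leading square does not vanish at some $\mfm$, and then read off a contradiction from $s(\H/\mfm)=2^n$; alternatively one may cite \Cref{Minimal Value Representation}$(e)$ to the same effect. Since the statement only claims $2^n\leq p(F)\leq 2^n+1$, I would keep the lower-bound discussion brief and lean on \Cref{EX:s=p-n-effective} and the residue-field level computation above, and give full detail only for the upper bound via \Cref{C:square-effective-Mnotempty}.
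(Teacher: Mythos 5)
Your upper bound is assembled exactly as in the paper: unpack the definition of square-effectivity of order $n$, get square-effectivity from \Cref{Semilocal square-effective}, note $(1+\sos{2^n}{\H})\cap\Jac(\H)\subseteq(1+\sums{F})\cap\Jac(\H)$, and apply \Cref{C:square-effective-Mnotempty}. The genuine gap is in your lower bound, specifically in the real case. There you fix $g\in(1+\sos{2^n}{\H})\cap\Jac(\H)$ and try to show that $g$ itself cannot be a sum of fewer than $2^n$ squares in $F$, but the argument trails off (``in any case one derives a contradiction'') without producing one --- and no contradiction is available from the facts you invoke, because that intermediate claim is false in this generality. Take $F=\qq$ (real) and $\H=\zz_{(3)}$: this is a local B\'ezout ring of $\qq$ which is square-effective of order $1$ (its residue field $\ff_3$ has level $2$), and $g=9=1+2^2+2^2\in(1+\sos{2}{\H})\cap\Jac(\H)$, yet $g=3^2$ is a single square. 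Your deduction only gets as far as ``the first coefficient $g_1$ lies in $\Jac(\H)$'', which is exactly what happens here ($g_1=3$) and is not contradictory; and since you never use $p^*(\H)\leq 2^n$ in this step (nor would it obviously help: $9$ is also a square in the $1$-effective field $\qq_3$), the element witnessing $(1+\sos{2^n}{\H})\cap\Jac(\H)\neq\emptyset$ is simply the wrong element to test.

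The paper's lower bound sidesteps this and needs no real/nonreal case split: if $\sos{2^n}{\H}\subseteq\sos{2^n-1}{\H}$ held, then $1+\sos{2^n}{\H}\subseteq\mg{\H}$, forcing $(1+\sos{2^n}{\H})\cap\Jac(\H)=\emptyset$ and contradicting square-effectivity of order $n$; hence some $h\in\sos{2^n}{\H}\setminus\sos{2^n-1}{\H}$ exists, and \Cref{Minimal Value Representation}~$(d)$ (applicable because $1+\sos{2^n-2}{\H}\subseteq\mg{\H}$) gives $\H\cap\sos{2^n-1}{F}=\sos{2^n-1}{\H}$, so $h\in\sums{F}\setminus\sos{2^n-1}{F}$ and $p(F)\geq 2^n$. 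Your nonreal-case reduction (normalising a representation of $-1$ via \Cref{Minimal Value Representation}~$(b)$ and reducing modulo a maximal ideal) is workable but superfluous once this uniform argument is in place; as written, your proof of the inequality $p(F)\geq 2^n$ for real $F$ is incomplete and its intended route cannot be repaired without changing the witness.
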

\begin{proof}
Let $\H$ be a semilocal B\'ezout ring of $F$ which is square-effective of order $n$ and such that $p^*(\H)\leq 2^n$.
Then we have $1+\H^2\subseteq 1+\sos{2^n-1}{\H}\subseteq\mg{\H}$ and $(1+\sos{2^n}{\H})\cap \Jac(\H)\neq \emptyset$.
Hence $p(F)\leq p^\ast(\H)+1\leq 2^n+1$, by \Cref{C:square-effective-Mnotempty}. 

If we had $\sos{2^n}\H\subseteq\sos{2^n-1}\H$, then $1+\sos{2^n}\H\subseteq 1+\sos{2^n-1}{\H}\subseteq \mg \H$, whence $(1+\sos{2^n}{\H})\cap \Jac(\H)=\emptyset$.
Therefore there exists $h\in \sos{2^n}\H\setminus \sos{2^n-1}\H $.
By \Cref{Minimal Value Representation} $(d)$, we have
$\H\cap \sos{2^n-1}F=\sos{2^n-1}\H$. 
Hence $h\notin\sos{2^n-1}F$, whereby $p(F)\geq 2^n$.
\end{proof}

\begin{cor}
Let $n\in\nat^+$. If $F$ is nonreal and $n$-effective, then $s(F)=2^n$.
\end{cor}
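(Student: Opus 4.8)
The plan is to deduce the equality from the two bounds $s(F)\le 2^n$ and $s(F)\ge 2^n$, using a semilocal B\'ezout ring $\H$ of $F$ that witnesses $n$-effectiveness. By definition, such an $\H$ is square-effective of order $n$ --- so in particular $1+\sos{2^n-1}{\H}\subseteq\mg\H$ --- and satisfies $p^*(\H)\le 2^n$.

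The inequality $s(F)\le 2^n$ is immediate: as recorded right after the definition of $p^*$, the nonreality of $F$ gives $-1\in\mg\H\cap\sums F\subseteq\sos{p^*(\H)}{F}\subseteq\sos{2^n}{F}$. For the reverse inequality I would argue by contradiction, assuming $s(F)\le 2^n-1$, so that $-1\in\sos{2^n-1}{F}$. The key step is to descend this relation into $\H$: since $\sos{2^n-2}{\H}\subseteq\sos{2^n-1}{\H}$, the hypothesis $1+\sos{2^n-1}{\H}\subseteq\mg\H$ yields $1+\sos{2^n-2}{\H}\subseteq\mg\H$, so \Cref{Minimal Value Representation}$(d)$ applies with $k=2^n-1$ and gives $\H\cap\sos{2^n-1}{F}=\sos{2^n-1}{\H}$. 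As $-1\in\H$, this forces $-1\in\sos{2^n-1}{\H}$, hence $0\in 1+\sos{2^n-1}{\H}\subseteq\mg\H$, which is absurd. Thus $s(F)=2^n$.

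The proof is short, and I do not anticipate a genuine obstacle; the one point that needs care is the index bookkeeping when invoking \Cref{Minimal Value Representation}$(d)$, namely applying it at $k=2^n-1$ (so that the inclusion $1+\sos{2^n-2}{\H}\subseteq\mg\H$ needed as a hypothesis there is exactly the one provided by the \emph{square-effective of order $n$} condition) rather than at $k=2^n$. It is also worth noting that this argument does not rely on Pfister's theorem that the level of a field is a power of $2$: the contradiction is extracted directly from the assumption $s(F)\le 2^n-1$.
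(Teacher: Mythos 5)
Your proof is correct, but it follows a genuinely different route from the paper. The paper argues purely numerically: since $F$ is nonreal, $s(F)\leq p(F)\leq s(F)+1$; \Cref{Bound for k-effective} gives $2^n\leq p(F)\leq 2^n+1$; and Pfister's theorem that the level is a $2$-power then pins down $s(F)=2^n$. You instead work directly with a ring $\H$ witnessing $n$-effectiveness: the upper bound $s(F)\leq 2^n$ comes from $p^*(\H)\leq 2^n$ exactly as in the remark following the definition of $p^*$, and the lower bound comes from descending a hypothetical relation $-1\in\sos{2^n-1}{F}$ into $\H$ via \Cref{Minimal Value Representation}~$(d)$ at $k=2^n-1$ (legitimate, since $\H$ is square-effective by \Cref{Semilocal square-effective} and the needed hypothesis $1+\sos{2^n-2}{\H}\subseteq\mg{\H}$ follows from $1+\sos{2^n-1}{\H}\subseteq\mg{\H}$), which would force $0\in 1+\sos{2^n-1}{\H}\subseteq\mg{\H}$, absurd. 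Your index bookkeeping is right, and your closing observation is accurate: you never invoke Pfister's $2$-power theorem, nor \Cref{Bound for k-effective}, so your argument stays entirely within the Section~2 machinery. This buys a little extra robustness: in the paper's counting argument at $n=1$ the interval $[2^n-1,2^n+1]$ contains the two $2$-powers $1$ and $2$, so excluding $s(F)=1$ strictly speaking needs the further remark that $-1\in F^{\times 2}$ would put $-1$ in $\H^2$ (by integral closedness) and contradict $1+\H^2\subseteq\mg{\H}$ --- which is precisely the $k=1$ instance of your descent, so your proof covers that case uniformly. The paper's proof, on the other hand, is shorter given that \Cref{Bound for k-effective} is already established.
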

\begin{proof}
Assume that $F$ is nonreal and $n$-effective.
Then $s(F)\leq p(F)\leq s(F)+1$, and by \Cref{Bound for k-effective}, we have $2^n\leq p(F)\leq 2^n+1$. 
Since $s(F)$ is a $2$-power and $n\geq 1$, the statement follows.
\end{proof}

Let $k \in \nat^+$ and let $\H$ be a subring of $F$.
We say that \emph{$\H$ characterises sums of $k$ squares in $F$} if 
$$\left\{h\in\mg{\H}\cap\sums{F}\mid h+\mfm\in\sos{k}{(\H/\mfm)}\mbox{ for any } \mfm\in \Max\H \right\}\subseteq \sos{k}{F}\,.$$
This condition allows one to check whether a unit in $\H$ which is a sum of squares in $F$ is a sum of $k$ squares in $F$ by inspecting residues modulo maximal ideals.

\begin{ex}\label{characterise and p^*}
Let $\H$ be a subring of $F$ with fraction field $F$.
Then for any $k\in\nat^+$ with $p^\ast(\H)\leq k$, we have that $\H$ characterises sums of $k$ squares in $F$.
\end{ex}

\begin{prop}\label{Inductive step}
Let $n\in\nat^+$. Let $R$ be a semilocal B\'ezout ring of $F$ which characterises sums of $2^n$ squares in $F$.
For every $\mfm \in \Max{R}$ let $\H(\mfm)$ be a semilocal B\'ezout ring of $R/\mfm$ which is square-effective of order $n$ and such that $p^*(\H(\mfm))\leq 2^n$.
Then 
$$\H=\{f\in R \mid f+ \mfm \in \H(\mfm) \mbox{ for all } \mfm \in \Max R\}$$
is a semilocal B\'ezout ring of $F$ which is square-effective of order $n$ and such that $p^*(\H)\leq 2^n$.
\end{prop}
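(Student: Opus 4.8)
The statement constructs a pullback ring $\H$ as the preimage in $R$ of the product $\prod_{\mfm}\H(\mfm)$ under the map $R\to\prod_\mfm R/\mfm$. The goal is to verify that $\H$ inherits four properties: it is a B\'ezout ring of $F$, it is semilocal, it is square-effective of order $n$ (meaning $1+\sos{2^n-1}{\H}\subseteq\mg\H$ and $(1+\sos{2^n}{\H})\cap\Jac(\H)\neq\emptyset$), and $p^\ast(\H)\leq 2^n$. First I would record the basic structure of $\H$: since $\Jac(R)=\bigcap\Max R\subseteq\H$ (because $0\in\H(\mfm)$ for every $\mfm$), the ring $\H$ sits between $R$ and its Jacobson radical, so $\H$ and $R$ have the same fraction field $F$, and every maximal ideal of $\H$ contains $\Jac(R)$. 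Thus $\Max\H$ is in bijection with $\bigsqcup_{\mfm\in\Max R}\Max(\H(\mfm))$ via $\mathfrak{n}\mapsto(\mfm,\mathfrak{n}/\mfm)$ where $\mfm$ is the unique maximal ideal of $R$ above $\mathfrak{n}$; since $\Max R$ is finite and each $\Max(\H(\mfm))$ is finite, $\H$ is semilocal. In particular the residue fields of $\H$ are exactly the residue fields $\H(\mfm)/(\mathfrak{n}/\mfm)$ of all the $\H(\mfm)$.

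Next I would establish the B\'ezout property. By \Cref{Intersection VRs} it suffices to show $\H$ is a finite intersection of valuation rings of $F$; equivalently, by the localisation description, that each $\H_\mathfrak{n}$ is a valuation ring. Given $\mathfrak{n}\in\Max\H$ lying over $\mfm\in\Max R$, one checks that $\H_\mathfrak{n}$ is the preimage in $R_\mfm$ of the valuation ring $\H(\mfm)_{\mathfrak{n}/\mfm}$ of $R/\mfm$ under the residue map $R_\mfm\to R/\mfm$; since $R_\mfm$ is a valuation ring of $F$ and $R/\mfm$ is its residue field, the composite of the valuation of $R_\mfm$ with the valuation corresponding to $\H(\mfm)_{\mathfrak{n}/\mfm}$ (the standard composition-of-valuations construction) has valuation ring exactly this preimage. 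Hence $\H_\mathfrak{n}$ is a valuation ring of $F$, and $\H=\bigcap_{\mathfrak{n}\in\Max\H}\H_\mathfrak{n}$ is a semilocal B\'ezout ring of $F$.

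For square-effectiveness of order $n$, I would use \Cref{Levels of residues}: since the residue fields of $\H$ are precisely the residue fields of the rings $\H(\mfm)$, and each $\H(\mfm)$ is square-effective of order $n$, part $(a)$ gives $s(\H(\mfm)/\mathfrak{n}')\geq 2^n$ for all residue fields, hence $s(\H/\mathfrak{n})\geq 2^n$ for all $\mathfrak{n}\in\Max\H$, so $1+\sos{2^n-1}{\H}\subseteq\mg\H$ by part $(a)$ again; and parts $(b)$ and $(c)$ give that $(1+\sos{2^n}{\H})\cap\Jac(\H)\neq\emptyset$ is equivalent (for semilocal $\H$) to $s(\H/\mathfrak{n})\leq 2^n$ for all $\mathfrak{n}$, which again follows from the same property of the $\H(\mfm)$. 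So $\H$ is square-effective of order $n$; in particular $1+\H^2\subseteq\mg\H$ and, via \Cref{Semilocal square-effective}, $\H$ is square-effective.

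**The main obstacle** is showing $p^\ast(\H)\leq 2^n$, i.e.\ that every $h\in\mg\H\cap\sums F$ is a sum of $2^n$ squares in $F$. Here is where the two hypotheses interlock: given such an $h$, its image in each $R/\mfm$ is a unit of $\H(\mfm)$ (since $h\in\mg\H$) and lies in $\sums(R/\mfm)$; because $p^\ast(\H(\mfm))\leq 2^n$, it is a sum of $2^n$ squares in the fraction field $R/\mfm$ of $\H(\mfm)$. Now I want to invoke that $R$ characterises sums of $2^n$ squares in $F$: but that requires the residue $h+\mfm$ to be a sum of $2^n$ squares in $R/\mfm$ for every $\mfm\in\Max R$, which is exactly what we just obtained, provided we also know $h\in\mg R\cap\sums F$. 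Since $h\in\mg\H\subseteq\mg R$ (units of $\H$ are units of $R$, as $\H\subseteq R$ have the same maximal-ideal structure over $R$... more precisely $h\notin\mathfrak{n}$ for all $\mathfrak{n}$, and every maximal ideal of $R$ is recovered from one of $\H$, so $h\notin\mfm$ for all $\mfm\in\Max R$) and $h\in\sums F$, the characterisation property of $R$ yields $h\in\sos{2^n}{F}$. The delicate point to get right is the bookkeeping that a unit of $\H$ is a unit of $R$ and that its residues in the $R/\mfm$ are units of the $\H(\mfm)$ lying in $\sums(R/\mfm)$ — this is a straightforward but careful chase through the pullback diagram. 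With that, $p^\ast(\H)\leq 2^n$, completing the proof.
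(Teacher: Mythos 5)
There is one genuine gap, and it sits exactly at the point you label as ``straightforward bookkeeping''. In the proof that $p^*(\H)\leq 2^n$ you assert that for $h\in\mg{\H}\cap\sums{F}$ the residue $h+\mfm$ lies in $\sums{(R/\mfm)}$ for every $\mfm\in\Max{R}$. This does not follow from any chase through the pullback diagram: $h$ is a sum of squares of elements of $F$ that need not lie in $R_\mfm$, so the representation cannot be reduced termwise, and a priori cancellation of high-valuation terms could occur. This is precisely where the paper invokes \Cref{Minimal Value Representation}~$(e)$ applied to $R$, i.e.\ the rewriting of a sum of squares so that one term dominates; and applying that lemma in turn requires knowing that $R$ is square-effective with $2\in\mg{R}$, which one deduces from \Cref{Semilocal square-effective} after checking $1+R^2\subseteq\mg{R}$ (using that the residue fields of the rings $\H(\mfm)$ have level $2^n\geq 2$ and that the $\H(\mfm)$ are integrally closed). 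Without this step the hypothesis $p^*(\H(\mfm))\leq 2^n$ cannot be applied, since it only concerns elements of $\mg{\H(\mfm)}\cap\sums{(R/\mfm)}$; the rest of your chain (units of $\H$ are units of $R$, residues are units of $\H(\mfm)$, then the characterisation hypothesis on $R$ concludes) is correct and is what the paper does.

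The structural half of your plan is essentially the paper's argument via composite valuations, but with some glossed steps and a slightly awkward order. You propose to prove the B\'ezout property by first identifying each localisation $\H_{\mathfrak{n}}$ with the preimage $\pi_\mfm^{-1}\big(\H(\mfm)_{\mathfrak{n}/\mfm}\big)$; the inclusion of that preimage into $\H_{\mathfrak{n}}$ is not a formality before one knows that $\H$ is B\'ezout, so as written there is a risk of circularity. It is cleaner (and is the paper's route) to note directly that $\H=\bigcap_{\mfm\in\Max R}\bigcap_{\mfm'\in\Max{\H(\mfm)}}\pi_\mfm^{-1}\big(\H(\mfm)_{\mfm'}\big)$, a finite intersection of valuation rings of $F$ whose residue fields are the $\H(\mfm)/\mfm'$, and then apply \Cref{Intersection VRs} and \Cref{Levels of residues}. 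Similarly, your claim that every maximal ideal of $\H$ contains $\Jac(R)$ is true but needs the specific pullback structure (one checks $\Jac(R)\subseteq\Jac(\H)$ by hand), not merely the containment $\Jac(R)\subseteq\H\subseteq R$. These points are repairable; the missing use of \Cref{Minimal Value Representation}~$(e)$ is the substantive omission.
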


\begin{proof}
Recall that
$$R=\bigcap_{\mfm\in\Max R} R_\mfm.$$ 

Fix $\mfm \in \Max{R}$. 
It follows by \cite[Theorem 107]{Kap74} that $R_\mfm$ is a valuation ring of $F$ with residue field $R/\mfm$. 
We denote by $\pi_\mfm: R_\mfm\rightarrow R/\mfm$ the residue homomorphism of $R_\mfm$.
Similarly, we have 
$$\H(\mfm)=\bigcap_{\mfm'\in\Max{\H(\mfm)}} {\H(\mfm)}_{\mfm'}\,.$$

Fix now
$\mfm'\in \Max{\H(\mfm)}$. 
It follows again by \cite[Theorem 107]{Kap74} that $(\H(\mfm))_{\mfm'}$ is a valuation ring of $R/\mfm$ with residue field $\H(\mfm)/\mfm'$. 
As $\H(\mfm)$ is square-effective of order $n$, we have $s(\H(\mfm)/\mfm')=2^n$, by \Cref{Levels of residues}.
For any valuation ring $\mc{O}$ of $R/\mfm$, we have that $\pi_\mfm^{-1}(\mc{O})$ is a valuation ring of $F$ having the same residue field as $\mc O$; see \cite[p.~45]{EP05}.
Thus $\pi_\mfm^{-1}(\H(\mfm))$ is the intersection of finitely many valuation rings of $F$ whose residue fields have level $2^n$. 

Since $R$ is semilocal, we obtain 
that
$\H$ is the intersection of finitely many valuation rings of $F$ with residue field of level $2^n$.
Thus $\H$ is a semilocal B\'ezout ring of $F$, by \Cref{Intersection VRs}, and it is square-effective of order $n$, by \Cref{Levels of residues}.

Let $f \in \mc{\mg H} \cap \sums{F}$.
For every $\mfm\in \Max{R}$, it follows by \Cref{Minimal Value Representation} $(e)$ that $f+\mfm\in\sums{(R/\mfm)}$.
By definition of $\H$ we have that $f+\mfm\in\mg{\mc{H}(\mfm)}\cap\sums{(R/\mfm)}$, and since $p^*(\mc{H}(\mfm))\leq 2^n$ we obtain that
$f+\mfm\in\sos{2^n}{(R/\mfm)}$.
Since $R$ characterises sums of $2^n$ squares in $F$, we conclude that $f\in \sos{2^n}{F}$. 
Thus $\mc{\mg{H}}\cap\sums{F}\subseteq \sos{2^n}{F}$, whereby $p^*(\mc{H})\leq 2^n$.
\end{proof}

Let $v$ be a valuation on $F$. We denote by $\mc O_v$ the valuation ring of $F$ associated to $v$, by $\mfm_v$ the maximal ideal of $\mc O_v$, by $Fv$ the residue field $\mc O_v/\mfm_v$, by $\pi_v$ the residue homomorphism $\mc{O}_v\to Fv$ and by $vF$ the value group $v(\mg{F})$.
For any subfield $E\subseteq F$, we write $Ev$ for the residue field of the restriction of $v$ to $E$, that is,  $Ev=\{x+(\mfm_v\cap E)\mid x\in\mc{O}_v\cap E\}\subseteq F v$.

Let $V$ be a set of valuations on $F$. 
We define $$\H_V=\bigcap\limits_{v\in V}\mc O_v\,.$$
For $k\in\nat^+$, 
we say that \emph{$V$ characterises sums of $k$ squares in $F$} if the ring $\H_V$ characterises sums of $k$ squares in $F$.

\begin{thm}\label{T:example0}
Let $n\in\nat^+$.
If there exists a nonempty finite set $V$ of valuations on $F$ that characterises sums of $2^n$ squares in $F$ and such that $Fv$ is $n$-effective for every $v\in V$, then $F$ is $n$-effective.
\end{thm}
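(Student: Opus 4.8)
The plan is to build the required semilocal B\'ezout ring of $F$ by the ``lifting'' construction of \Cref{Inductive step}. We are given a nonempty finite set $V$ of valuations on $F$ such that $R=\H_V=\bigcap_{v\in V}\mc O_v$ characterises sums of $2^n$ squares in $F$, and such that each residue field $Fv$ is $n$-effective. First I would check that $R$ is a semilocal B\'ezout ring of $F$: a finite intersection of valuation rings of $F$ is a semilocal B\'ezout ring by \Cref{Intersection VRs}, and its fraction field is $F$ because each $\mc O_v$ has fraction field $F$. Its maximal ideals correspond to the (finitely many) valuations in $V$ that are pairwise incomparable and minimal, with residue fields among the $Fv$; more precisely, for $\mfm\in\Max R$ the localisation $R_\mfm$ is a valuation ring of $F$ with residue field equal to some $Fv$ with $v\in V$ (by \cite[Theorem 107]{Kap74}, as used in the proof of \Cref{Inductive step}).

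Next, for each $\mfm\in\Max R$, since the residue field $R/\mfm\cong Fv$ is $n$-effective, by definition it has a semilocal B\'ezout ring $\H(\mfm)$ of $R/\mfm$ that is square-effective of order $n$ and satisfies $p^*(\H(\mfm))\leq 2^n$. Having made these choices, I would invoke \Cref{Inductive step} directly with this $R$ and this family $\{\H(\mfm)\}_{\mfm\in\Max R}$: the hypotheses of that proposition are precisely that $R$ is a semilocal B\'ezout ring of $F$ characterising sums of $2^n$ squares in $F$ and that each $\H(\mfm)$ is a semilocal B\'ezout ring of $R/\mfm$ square-effective of order $n$ with $p^*(\H(\mfm))\leq 2^n$. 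Its conclusion yields a semilocal B\'ezout ring $\H$ of $F$ which is square-effective of order $n$ and satisfies $p^*(\H)\leq 2^n$. By the definition of $n$-effective, this means exactly that $F$ is $n$-effective, which is the claim.

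The only genuine point requiring care — and the step I expect to be the main obstacle — is the passage from ``$Fv$ is $n$-effective'' to ``$R/\mfm$ has a semilocal B\'ezout ring square-effective of order $n$ with $p^*\leq 2^n$'', i.e.\ identifying $R/\mfm$ with one of the residue fields $Fv$. One must verify that every maximal ideal $\mfm$ of $R=\H_V$ has residue field (isomorphic to) $Fv$ for some $v\in V$. This is a standard fact about finite intersections of valuation rings: writing $V=\{v_1,\dots,v_m\}$ and discarding any $v_i$ whose valuation ring contains another $\mc O_{v_j}$, one reduces to the case of pairwise incomparable valuation rings, for which $\Max R$ is in bijection with the set of minimal $v_i$ and $R_{\mfm_i}=\mc O_{v_i}$, so $R/\mfm_i\cong Fv_i$. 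Since $n$-effectiveness is a property of the isomorphism class of the field, this suffices. After this identification everything is a direct application of \Cref{Inductive step}, so the proof is short.

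\begin{proof}
By \Cref{Intersection VRs}, $R=\H_V$ is a semilocal B\'ezout ring of $F$, and by hypothesis it characterises sums of $2^n$ squares in $F$. For each $\mfm\in\Max R$, the localisation $R_\mfm$ is a valuation ring of $F$ with residue field $R/\mfm$, and by \cite[Theorem 107]{Kap74} this residue field is isomorphic to $Fv$ for some $v\in V$. Since $Fv$ is $n$-effective, it follows that $R/\mfm$ admits a semilocal B\'ezout ring $\H(\mfm)$ of $R/\mfm$ which is square-effective of order $n$ and satisfies $p^*(\H(\mfm))\leq 2^n$. Applying \Cref{Inductive step} to $R$ together with the family $(\H(\mfm))_{\mfm\in\Max R}$, we obtain that
$$\H=\{f\in R \mid f+\mfm\in\H(\mfm)\mbox{ for all }\mfm\in\Max R\}$$
is a semilocal B\'ezout ring of $F$ which is square-effective of order $n$ and satisfies $p^*(\H)\leq 2^n$. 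Hence $F$ is $n$-effective.
\end{proof}
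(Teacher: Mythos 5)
Your proof is correct and follows essentially the same route as the paper: take $R=\H_V$, use \cite[Theorem 107]{Kap74} to identify the residue fields $R/\mfm$ with the fields $Fv$, $v\in V$, choose the rings $\H(\mfm)$ provided by the $n$-effectiveness of each $Fv$, and conclude via \Cref{Inductive step}. The extra care you take in justifying that every $\mfm\in\Max R$ has residue field isomorphic to some $Fv$ is a sound elaboration of exactly the step the paper also delegates to Kaplansky's theorem.
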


\begin{proof}
Assume that $V$ is such a set of valuations on $F$.
Then $\H_V$ is a semilocal B\'ezout ring of $F$ that characterises sums of $2^n$ squares in $F$. 
For every $v\in V$ we may fix a semilocal B\'ezout ring $\H_v$ of $Fv$ which is square-effective of order $n$ and such that $p^\ast(\H_v)\leq 2^n$.
Set $R=\H_V$.
For every $\mfm \in \Max{R}$, we have by \cite[Theorem 107]{Kap74} that $\mfm = \mfm_v\cap R$ for some $v\in V$, and we set $\H(\mfm)=\H_v$ for such a $v\in V$.
In this setting, it follows by \Cref{Inductive step}
that the subring $\H$ constructed there is square-effective of order $n$ and that
$p^\ast(\H)\leq 2^n$. 
Hence $F$ is $n$-effective.
\end{proof}

A valuation with value group $\zz$ is called a \emph{$\zz$-valuation}.
We denote by $\Omega_F$ the set of $\zz$-valuations on $F$.
For $v\in\Omega_F$, we denote by $F^v$ the completion of $F$ with respect to $v$, which is given by the fraction field of the completion of the $\mfm_v$-adic completion of  $\mc{O}_v$.

\begin{prop}\label{characterises-sos-k}
Let $k\in\nat$ with $k\geq 2$ and let $V$ be a finite set of $\zz$-valuations on $F$ such that $s(Fv)\geq k$ for all $v\in V$.
Then $V$ characterises sums of $k$ squares if and only if 
$\sos{k}{F}=\sums{F}\cap \bigcap_{v\in V} \sos{k}{F^v}$.
\end{prop}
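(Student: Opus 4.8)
The plan is to match the condition defining ``$V$ characterises sums of $k$ squares in $F$'' --- which speaks about residues of units of $\H_V=\bigcap_{v\in V}\mc O_v$ modulo its maximal ideals --- with the condition $\sos{k}{F}=\sums{F}\cap\bigcap_{v\in V}\sos{k}{F^v}$, via a local analysis at each $v\in V$.

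The key local input I would isolate first is the following. Let $v\in\Omega_F$ with $s(Fv)\geq k$; note that then $\mathsf{char}(Fv)\neq 2$, since a field of characteristic $2$ has level $1<k$, and in particular $2\in\mg{\mc O_{F^v}}$. I claim that every nonzero element of $\sos{k}{F^v}$ has even value under the (unique) extension of $v$ to $F^v$, and that for $u\in\mg{\mc O_v}$ one has $u\in\sos{k}{F^v}$ if and only if $\pi_v(u)\in\sos{k}{(Fv)}$. For the first part together with the ``only if'' part: write $u=x_1^2+\dots+x_k^2$ with $x_i\in F^v$ not all zero, put $m=\min_i v(x_i)$ and $y_i=\pi^{-m}x_i\in\mc O_{F^v}$ for a uniformiser $\pi$, so some $y_i$ is a unit; then $\sum_i\pi_v(y_i)^2$ is a sum of at most $k$ squares in $Fv$ that cannot vanish, for otherwise $-1$ would be a sum of at most $k-1$ squares in $Fv$, contradicting $s(Fv)\geq k$; hence $v(u)=2m$, which is even, and if $u\in\mg{\mc O_v}$ then $m=0$, so all $x_i\in\mc O_{F^v}$ and $\pi_v(u)=\sum_i\pi_v(x_i)^2\in\sos{k}{(Fv)}$. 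For the ``if'' part: if $\pi_v(u)=\bar a_1^2+\dots+\bar a_k^2$ with, say, $\bar a_k\neq 0$, lift the $\bar a_i$ to $a_i\in\mc O_{F^v}$ and solve $t^2+2a_kt-(u-a_1^2-\dots-a_k^2)=0$ for $t\in\mfm_{F^v}$ by Hensel's lemma (legitimate as $2a_k\in\mg{\mc O_{F^v}}$); then $a_1,\dots,a_{k-1},a_k+t$ exhibit $u$ as a sum of $k$ squares in $F^v$.

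Next I would set up $\H_V$. By \Cref{Intersection VRs}, $\H_V$ is a semilocal B\'ezout ring of $F$; moreover, since a $\zz$-valuation ring has no proper overring in $F$ other than $F$, distinct such rings are incomparable, so (as in the proof of \Cref{Inductive step}, via \cite[Theorem~107]{Kap74}) the maximal ideals of $\H_V$ are exactly the $\mfm_v\cap\H_V$ for $v\in V$, with residue field $Fv$ and localisation $\mc O_v$, and $\mg{\H_V}=\{x\in\mg F\mid v(x)=0\text{ for all }v\in V\}$. Combining this with the local input, the set occurring in the definition of ``$\H_V$ characterises sums of $k$ squares in $F$'' equals $\mg{\H_V}\cap\sums F\cap\bigcap_{v\in V}\sos{k}{F^v}$, so that property is equivalent to the inclusion $\mg{\H_V}\cap\sums F\cap\bigcap_{v\in V}\sos{k}{F^v}\subseteq\sos{k}F$. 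If $\sos{k}F=\sums F\cap\bigcap_{v\in V}\sos{k}{F^v}$, this inclusion is immediate. Conversely, assume $\H_V$ characterises sums of $k$ squares and take a nonzero $g\in\sums F\cap\bigcap_{v\in V}\sos{k}{F^v}$ (for $g=0$ there is nothing to prove). By the local input, $v(g)$ is even for each $v\in V$, say $v(g)=2m_v$; since the valuations in $V$ are pairwise independent $\zz$-valuations, there is $c\in\mg F$ with $v(c)=-m_v$ for all $v\in V$. Then $c^2g\in\mg{\H_V}$, while $c^2g\in\sums F$ (as $\mg{(\sums F)}$ is a group) and $c^2g\in\sos{k}{F^v}$ for each $v$ (as $\sos{k}{F^v}$ is stable under multiplication by squares of $F^v$); hence $c^2g\in\mg{\H_V}\cap\sums F\cap\bigcap_{v\in V}\sos{k}{F^v}\subseteq\sos{k}F$, and multiplying by $c^{-2}$ yields $g\in\sos{k}F$. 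Together with the trivial inclusion $\sos{k}F\subseteq\sums F\cap\bigcap_{v\in V}\sos{k}{F^v}$ this proves the equality.

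I expect the main obstacle to be the local input, and within it the passage from the residue field back to $F^v$: one must rule out cancellation among leading terms --- which is exactly what the hypothesis $s(Fv)\geq k$ provides --- and then invoke Hensel's lemma, which needs $2\in\mg{\mc O_{F^v}}$; both points hinge on $s(Fv)\geq k\geq 2$. The identification of $\Max{\H_V}$ with $V$ and the bookkeeping with completions are routine once \Cref{Intersection VRs} and the structure of $\zz$-valuation rings are in hand.
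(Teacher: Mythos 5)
Your proof is correct and follows essentially the same route as the paper's: a local analysis at each $v\in V$ showing that a $v$-unit lies in $\sos{k}{F^v}$ exactly when its residue lies in $\sos{k}{(Fv)}$ (with $s(Fv)\geq k$ preventing cancellation and Hensel's lemma providing the lift), combined with weak approximation for the pairwise independent $\zz$-valuations to rescale a global element into $\mg{\H_V}$ simultaneously at all $v\in V$. The only cosmetic difference is that you prove the local lifting step directly by hand, where the paper invokes \Cref{Minimal Value Representation}~$(c)$,~$(e)$ and \Cref{Levels of residues}, and you apply approximation to prescribe the values of a single scaling element rather than to approximate the local square factors $g_v$.
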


\begin{proof}
Set $S=\{x \in \mg{\H}_V \cap \sums F \mid x+\mfm_v \in  \sos{k}{(Fv)} \, \text{ for every } \, v \in V\}$ and $$T=\sums{F}\cap \bigcap_{v\in V} \sos{k}{F^v}.$$
Note that $\sos{k}{F}\subseteq T$.
Hence we have to show that $S\subseteq \sos{k}{F}$ if and only if $T\subseteq \sos{k}{F}$.
We claim that $T=F^2 \cdot S$. 
The statement then follows trivially from this equality.

Consider $v\in V$. 
Recall that $v$ extends uniquely to a $\zz$-valuation $v'$ on $F^v$ with $F^vv'=Fv$.
We define  $S_v=\{x \in \mg{\mc O}_{v'} \cap \sums{F^v} \mid x+\mfm_{v'} \in  \sos{k}{(Fv)}\}$.
Since $s(Fv)\geq k\geq 2$, we obtain by \Cref{Levels of residues} that $1+\sos{k-1}{\mc O_{v'}} \subseteq \mg{\mc{O}}_{v'}$ and that $v'(2)=0$.
It follows by \Cref{Minimal Value Representation}~$(c)$ and $(e)$ that
$\sos{k}{F^v}\subseteq F^{v 2} S_v$.
As $F^v$ is henselian with respect to $v'$ and 
$v'(2)=0$, we also have the converse inclusion. 
Hence $\sos{k}{F^v}=F^{v2}  S_v$.
This implies that $F^2 (\sums F\cap S_v)\subseteq\sums F\cap\sos{k}{F^v}$.
We claim that the opposite inclusion also holds.
Consider $x\in (\sums F\cap\sos{k}{F^v})\setminus \{0\}$.
Using that $v'F^v=vF$, $F^vv'=Fv$ and $\sos{k}{F^v}\subseteq F^{v 2} S_v$, we obtain that 
there exists $c\in \mg{F}$ such that $c^2x\in\mg{\mc{O}}_v\cap\sos{k}{F^v}\subseteq S_v$, whereby $x\in F^2 (\sums F\cap S_v)$.
Therefore we have
$\sums F\cap\sos{k}{F^v}= F^2 (\sums F\cap S_v)$.
Hence
$$T= \bigcap_{v\in V} (\sums{F}\cap\sos{k}{F^v})=\bigcap_{v\in V} F^2 (\sums F\cap S_v).$$
Observe that $S=\sums{F}\cap \bigcap_{v\in V} S_v \subseteq \bigcap_{v\in V} F^2 (\sums F\cap S_v)=T$.
Hence it remains to be shown that $T\subseteq F^2\cdot S$.
To this purpose, consider $f\in T\setminus \{0\}$. 
For every $v\in V$ we fix $g_v \in F$ and $h_v\in S_v$ 
such that $f=g_v^2h_v$. 
Since $V$ is a finite set of $\zz$-valuations on $F$, which are necessarily pairwise independent, we can apply the Approximation Theorem \cite[Theorem~2.4.1]{EP05} to obtain 
an element $g\in \mg F$ such that $v(g-g_v)> v(g_v)$ for every $v \in V$.
It follows that $f/g^2\in \bigcap_{v\in V}S_v$.
Hence
$f\in F^2 (\sums{F}\cap \bigcap_{v\in V} S_v)=F^2\cdot S$.
\end{proof}

By a \emph{quadratic form} we mean a homogeneous polynomial of degree $2$.
We say that a class $\mc{C}$ of quadratic forms over $F$ \emph{satisfies the local-global principle with respect to $\Omega_F$} if, for every form $q\in\mc{C}$ that has a nontrivial solution over $F^v$ for every $v\in\Omega_F$, $q$ has a nontrivial solution over $F$.

\begin{cor}\label{C:characterise-criterion}
Let $k\in\nat$ with $k\geq 2$. 
Suppose that quadratic forms over $F$ of the shape $\sum_{i=1}^k X_i^2 - aX_0^2$ with $a\in \mg{F}$ over $F$ satisfy the local-global principle with respect to $\Omega_F$ and that the set $V=\{v\in\Omega_F\mid p(F^v)>k\}$ is finite and contains no dyadic valuation. 
Then $V$ characterises sums of $k$ squares in $F$.
\end{cor}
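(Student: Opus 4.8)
The plan is to deduce this from \Cref{characterises-sos-k}. Since $V$ is by hypothesis a finite set of $\zz$-valuations on $F$ and $k\geq 2$, that proposition applies as soon as one knows in addition that $s(Fv)\geq k$ for every $v\in V$, and it then reduces the claim to the single equality
$$\sos{k}{F}=\sums{F}\cap\bigcap_{v\in V}\sos{k}{F^v}\,.$$
So I would organise the proof around establishing these two facts.

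For the displayed equality, the inclusion ``$\subseteq$'' is immediate. For the reverse inclusion, take $g\in\sums{F}\cap\bigcap_{v\in V}\sos{k}{F^v}$; we may assume $g\neq 0$, so $g\in\mg F$. The key observation is that for $v\in\Omega_F\setminus V$ one has $p(F^v)\leq k$, so $g\in\sums{F^v}\subseteq\sos{k}{F^v}$, using $p(F^v)\leq k$; hence in fact $g\in\sos{k}{F^v}$ for \emph{every} $v\in\Omega_F$. Consequently the $(k+1)$-ary quadratic form $q=\sum_{i=1}^k X_i^2-gX_0^2$ has a nontrivial zero over $F^v$ for all $v\in\Omega_F$, so the assumed local-global principle yields a nontrivial zero $(x_0,\dots,x_k)$ of $q$ over $F$. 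If $x_0\neq 0$, then $g=\sum_{i=1}^k(x_i/x_0)^2\in\sos{k}{F}$. If $x_0=0$, then $x_1^2+\dots+x_k^2=0$ with the $x_i$ not all zero, so $-1\in\sos{k-1}{F}$; thus $F$ is nonreal with $p(F)\leq s(F)+1\leq k$, and again $g\in\sums{F}=\sos{k}{F}$. This proves the equality, and combined with \Cref{characterises-sos-k} it gives that $V$ characterises sums of $k$ squares in $F$.

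The step I expect to be the main obstacle is the auxiliary fact that $s(Fv)\geq k$ for every $v\in V$. Here I would use that $F^v$, being complete with respect to a $\zz$-valuation, is henselian. Suppose $s(Fv)<k$ for some $v\in V$. If the residue characteristic is different from $2$, then Hensel's lemma lifts a representation of $-1$ as a sum of $s(Fv)$ squares over $Fv$ to one over $F^v$, so $s(F^v)\leq s(Fv)<k$; then $F^v$ is nonreal with $p(F^v)\leq s(F^v)+1\leq k$, contradicting $v\in V$. The residue characteristic $2$ case must be handled separately; in the situations to which the corollary will be applied it does not arise, because either $\car F\neq 0$ and then every residue field has characteristic $\car F\neq 2$, or $k=2^n$ is large enough that a complete discretely valued field of residue characteristic $2$ has Pythagoras number at most $k$ and hence does not belong to $V$. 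Granting $s(Fv)\geq k$ for all $v\in V$, the displayed equality and \Cref{characterises-sos-k} complete the argument.
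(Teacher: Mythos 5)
Your strategy coincides with the paper's: both reduce to \Cref{characterises-sos-k}, for which one needs the finiteness of $V$ (a hypothesis), the equality $\sos{k}{F}=\sums{F}\cap\bigcap_{v\in V}\sos{k}{F^v}$, and the bound $s(Fv)\geq k$ for all $v\in V$. Your derivation of the equality from the local-global principle (absorbing the valuations outside $V$ via $p(F^v)\leq k$, then treating an isotropic vector with $x_0=0$ by noting $-1\in\sos{k-1}{F}$ and hence $p(F)\leq k$) is correct and simply makes explicit what the paper dispatches with ``it follows from the hypotheses''. For the level bound, the paper's one line $k<p(F^v)\leq s(Fv)+1$ is exactly your Hensel argument in packaged form: for $Fv$ real the inequality is vacuous, and for $Fv$ nonreal of residue characteristic different from $2$ one has $s(F^v)=s(Fv)$ by Hensel's lemma and then $p(F^v)\leq s(F^v)+1$.

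The one point where your write-up does not establish the statement as formulated is the residue characteristic $2$ case, and the justification you sketch for setting it aside is not a proof: the corollary is stated for every $k\geq 2$, not only for $k=2^n$, and it is not true that a complete discretely valued field with residue characteristic $2$ automatically has Pythagoras number at most $k$; already $p(\qt)\geq s(\qt)=4$ while $s(\ff_2)+1=2$, so the inequality $p(F^v)\leq s(Fv)+1$ genuinely fails for dyadic residue fields, and such a $v$ could a priori lie in $V$ with $s(Fv)=1<k$, in which case \Cref{characterises-sos-k} is not applicable. Be aware, though, that the paper's own proof is in the same boat: the inequality it invokes is precisely the residue-characteristic-$\neq 2$ fact that you prove, and the dyadic case is silently outside its scope as well (in the paper's applications the residue fields $Fv$ never have characteristic $2$, so the issue does not arise there). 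So your proposal matches the paper's argument wherever that argument applies; the speculative remarks about the dyadic case should either be dropped, with the residue-characteristic restriction stated explicitly, or replaced by an actual argument, since as written they do not close that case.
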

 \begin{proof}
 It follows from the hypotheses that 
  $$\sos{k}{F}=\sums{F}\cap \bigcap_{v\in \Omega_F} \sos{k}{F^v}=\sums{F}\cap \bigcap_{v\in V} \sos{k}{F^v}\,.$$
For any $v\in V$, we have $k< p(F^v)\leq s(Fv)+1$, whereby $s(Fv)\geq k$. 
Hence the statement follows by \Cref{characterises-sos-k}.
 \end{proof}

\section{Function fields in one variable} 

The tools developed in the previous sections can be used to compute the Pythagoras numbers of certain function fields in one variable over $\ffps{k}{t}$ for some base fields $k$.
Note that any field $k$ is relatively algebraically closed in $\ffps{k}{t}$, and in particular any irreducible polynomial in $k[X]$ remains irreducible in $\ffps{k}{t}[X]$.

\begin{prop}\label{Hyperelliptic}
Let $n\in\nat$.
Let $k$ be a real field such that $p(L)\leq 2^n$ for every finite field extension $L/k$.  
Let $h\in k[X]$ be irreducible and such that $h \in\sums{k(X)}\setminus \sos{2^{n+1}-1}{k(X)}$.
Then $$p\left(k(\!(t)\!)(X)\big(\sqrt{(tX-1)h}\big)\right)=2^{n+1}+1\,.$$
\end{prop}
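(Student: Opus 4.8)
The strategy is to set $F = k(\!(t)\!)(X)\big(\sqrt{(tX-1)h}\big)$, to check that $F$ is $(n+1)$-effective using the machinery from Sections~2--3, and then to invoke \Cref{Bound for k-effective} to get $2^{n+1}\leq p(F)\leq 2^{n+1}+1$, before pinning down the exact value as $2^{n+1}+1$ by ruling out $p(F)=2^{n+1}$. To produce the $(n+1)$-effective structure, I would apply \Cref{T:example0}: I need a nonempty finite set $V$ of valuations on $F$ that characterises sums of $2^{n+1}$ squares in $F$ and with each residue field $Fv$ being $(n+1)$-effective. The natural candidate is to start from the $t$-adic valuation $v_0$ on $k(\!(t)\!)(X)$ extended appropriately to $F$; the residue field of $k(\!(t)\!)(X)$ at $v_0$ is $k(X)$, and over the two places of $F$ above $v_0$ (the curve $Y^2=(tX-1)h$ degenerating as $t\to 0$ should give, after normalisation, residue fields that are finite extensions of $k(X)$ — one recovers $k(X)$ itself and $k(X)(\sqrt{-h})$, since $tX-1\equiv -1$). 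Each such residue field $E/k(X)$ is a function field in one variable over a real field $k$ with $p(L)\le 2^n$ for all finite $L/k$; by Pfister's theorem $p(E)\le 2^{n+1}$, and in fact I would argue these residue fields are themselves $(n+1)$-effective, e.g. by a further application of \Cref{T:example0} using $X$-adic-type valuations on $E$, or directly because $s$ of their relevant residue fields equals $2^{n+1}$ (via \Cref{EX:s=p-n-effective} when applicable). The characterisation of sums of $2^{n+1}$ squares by $V$ I would obtain from \Cref{C:characterise-criterion}: one needs the local-global principle for the forms $\sum_{i=1}^{2^{n+1}}X_i^2 - aX_0^2$ over $F$ with respect to $\Omega_F$, together with finiteness of $\{v\in\Omega_F\mid p(F^v)>2^{n+1}\}$. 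Here $F$ is a function field in one variable over the complete discretely valued field $k(\!(t)\!)$, and the local-global principle for such Pfister-neighbour forms should come from results of the type in \cite{BVG09} (Milnor's exact sequence for $k(\!(t)\!)(X)$ feeding into the function field of the curve); the finiteness of the bad set is a standard ramification argument, as the completions $F^v$ have residue fields that are themselves function fields over real closed or finite-type fields with controlled Pythagoras number, and only finitely many $v$ can push $p(F^v)$ above $2^{n+1}$.

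For the lower bound $p(F)\ge 2^{n+1}+1$, the key input is the hypothesis $h\notin\sos{2^{n+1}-1}{k(X)}$. The idea is that $h$, viewed in $F$, is a sum of $2^{n+1}$ squares (since $p(F)\le 2^{n+1}+1$ and one checks $h\in\sums{F}$, and more precisely $h$ lies in $\sos{2^{n+1}}{F}$ by the $(n+1)$-effective structure: $h$ is a unit in the relevant $\H$ up to squares and is a sum of $2^{n+1}$ squares in each residue field), but $h\notin\sos{2^{n+1}-1}{F}$. To see the latter, suppose $h=\sum_{i=1}^{2^{n+1}-1}\alpha_i^2$ with $\alpha_i\in F$. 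Specialising along the place of $F$ over $v_0$ with residue field $k(X)$ — where $tX-1\mapsto -1$ so $Y^2\mapsto -h$, i.e. this residue field contains $\sqrt{-h}$... — one must be careful; instead I would specialise along the place whose residue field is exactly $k(X)$, for which $\sqrt{(tX-1)h}$ reduces suitably (its valuation being odd so that the residue of $h$ is genuine), obtaining $h\in\sos{2^{n+1}-1}{k(X)}$, a contradiction. Combining $h\in\sos{2^{n+1}}{F}\setminus\sos{2^{n+1}-1}{F}$ with the upper bound $p(F)\le 2^{n+1}+1$ gives $p(F)=2^{n+1}+1$.

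The main obstacle I expect is the precise identification of the residue fields $Fv$ at the places above the $t$-adic valuation, together with verifying that the local-global principle of \Cref{C:characterise-criterion} genuinely applies to $F$ for $k=2^{n+1}$: one must confirm that the quadratic forms $\sum_{i=1}^{2^{n+1}}X_i^2-aX_0^2$ over $F$ satisfy the local-global principle with respect to $\Omega_F$ (drawing on \cite{BVG09}, in the spirit of the Tikhonov-curve discussion in the introduction) and that the exceptional set of $v$ with $p(F^v)>2^{n+1}$ is finite. A secondary technical point is ensuring the chosen place of $F$ over $v_0$ used for the lower-bound specialisation has the correct residue behaviour (odd value of $Y$) so that the residue of $h$ in $k(X)$ is exactly $h$ and the inequality $h\notin\sos{2^{n+1}-1}{k(X)}$ can be transferred. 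Everything else — assembling the $(n+1)$-effective ring via \Cref{Inductive step}/\Cref{T:example0}, and the final numerical conclusion via \Cref{Bound for k-effective} — is then routine.
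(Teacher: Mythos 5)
Your overall frame (make $F$ $(n+1)$-effective, apply \Cref{Bound for k-effective}, then exclude $p(F)=2^{n+1}$) matches the paper, but two of your key steps are not correct as stated. First, the decomposition of the $t$-adic Gauss valuation $v_0$ of $k(\!(t)\!)(X)$ in $F$ is not what you claim: since $v_0((tX-1)h)=0$ and the residue of $(tX-1)h$ is $-h$, which is a nonsquare in the real field $k(X)$, the extension $F/k(\!(t)\!)(X)$ is \emph{inert} at $v_0$. There is exactly one place of $F$ above $v_0$, its residue field is $k(X)(\sqrt{-h})$, and there is no place above $v_0$ with residue field $k(X)$; a ramified place (your fallback, ``valuation of $\sqrt{(tX-1)h}$ odd'') would require $v_0((tX-1)h)$ to be odd, which it is not. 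Hence your entire lower-bound argument -- specialising a hypothetical identity $h=\sum_{i=1}^{2^{n+1}-1}\alpha_i^2$ at a place with residue field $k(X)$ -- has no place to specialise at and collapses. The paper gets the lower bound quite differently: $tX\in\sums{F}$ has odd value under the unique extension $v$ of $v_0$, while $s(Fv)=2^{n+1}$ (by Scharlau's theorem on the level of $k(X)(\sqrt{-h})$, using $h\in\sos{2^{n+1}}{k(X)}\setminus\sos{2^{n+1}-1}{k(X)}$) forces $v(\sos{2^{n+1}}{F})\subseteq 2\zz$ by \cite[Proposition 4.1]{BGVG14}, so $p(F)>2^{n+1}$.

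Second, your route to the upper bound via \Cref{C:characterise-criterion} and \Cref{T:example0} leaves the essential point unproved: you must show that the residue fields $Fv$ at the finitely many bad valuations are $(n+1)$-effective, and your only suggestions are a vague further application of \Cref{T:example0} or \Cref{EX:s=p-n-effective} ``when applicable''; the latter needs $p(Fv)=s(Fv)=2^{n+1}$, and for the relevant field $k(X)(\sqrt{-h})$ you establish neither equality (the hypothesis only controls $p(L)$ for finite $L/k$, not for function fields in one variable over $k$). Also, the local-global principle you invoke is not what \cite{BVG09} provides: \cite[Theorem 3.10]{BVG09} is an index bound, not a local-global principle over $\Omega_F$; the principle you would need is the patching result of \cite{CTPS12}, which the paper deliberately avoids here. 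The paper's actual argument sidesteps all of this with a single valuation ring: it takes $\H=\mc{O}_v$ for the unique extension $v$ of the Gauss valuation, shows $s(Fv)=2^{n+1}$ (so $\mc{O}_v$ is square-effective of order $n+1$ by \Cref{Levels of residues}), and uses \cite[Theorem 3.10]{BVG09} to get $\lvert\mg{(\sums{F})}/\mg{(\sos{2^{n+1}}{F})}\rvert\leq 2$; combined with the parity argument above this yields $\sums{F}\cap\mg{\mc{O}}_v=\sos{2^{n+1}}{F}$, hence $p^*(\mc{O}_v)\leq 2^{n+1}$, and then \Cref{Bound for k-effective} finishes. You would need to either reproduce these ingredients (Scharlau's level computation and the index-2 result) or find substitutes; as written, both the lower bound and the effectiveness of the residue fields are genuine gaps.
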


\begin{proof}
We set $K=k(\!(t)\!)$, $f=(tX-1)h \in K[X]$ and $F=K(X)(\sqrt{f})$.
We further set $K'=K(\vartheta)$ and $k'=k(\vartheta)$ for some root $\vartheta$ of $h$.
Observe that $K'/K$ and $k'/k$ are finite extensions, and that the $t$-adic valuation on $K$ extends to a $\zz$-valuation on $K'$ with residue field $k'$, and hence $K'$ can be identified with $k'(\!(t)\!)$.
In particular, we have that $\mg{K'}=(\mg{k'}\cup t\mg{k'})\cdot\sq{K'}$.
Since  $p(k')\leq 2^n$, it follows that $\sums{K'}\subseteq\sq{K'}(\sos{2^n}{K'}\cup t\sos{2^n}{K'})$.
In particular $\lvert \mg{(\sums{K'})}/\mg{(\sos{2^{n}}{K'})}\rvert \leq 2$.
We set $m=n+1$ and obtain by \cite[Theorem 3.10]{BVG09} that $\lvert \mg{(\sums F)}/\mg{(\sos{2^{m}}{F})}\rvert \leq 2$.
By \cite[Theorem 3.5]{BVG09}, the hypotheses further implies that $p(k(X))\leq 2^{m}$ and consequently $h\in \sos{2^m}{k(X)}\setminus \sos{2^m-1}{k(X)}$.

We consider the Gauss extension to $K(X)$ of the $t$-adic valuation on $K$ with respect to the variable $X$ (see e.g. \cite[Corollary 2.2.2]{EP05}), and we denote by $v$ an extension of this valuation from $K(X)$ to $F$.
Note that $vK(X)=vK=\zz$ and that $K(X)v$ can be naturally identified with $k(X)$.
The residue of $f$ with respect to $v$ is given by $-h$.
Hence $Fv=k(X)(\sqrt{-h})$.
As $-h$ is not a square in $k(X)$, we have $[Fv:K(X)v]=2=[F:K(X)]$.
By \cite[Theorem 3.3.4]{EP05}, it follows that $vF=vK(X)=vK=\zz$.
Since $k(X)$ is real and $h\in \sos{2^m}{k(X)}\setminus \sos{2^m-1}{k(X)}$, we obtain by \cite[Theorem 4.4.3~$(i)$]{Scha85} that $s(Fv)= 2^m$.
Since $vF=\zz$, it follows by \cite[Proposition 4.1]{BGVG14} that $v(\sos{2^{m}}{F})\subseteq 2\zz$.
As $tX\in\sums{F}$ and $v(tX)=1$,
we obtain that $p(F)>2^m$ and $\mg{\mc{O}}_v\cap tX\sos{2^m}{F}=\emptyset$.
Since $|\sums{F}/\sos{2^m}{F}|\leq 2$, we conclude that $\sums{F}=\sos{2^m}{F}\cup tX\sos{2^m}{F}$ and $\sums{F}\cap \mg{\mc{O}}_v\subseteq \sos{2^n}{F}$.
Hence $\mc{O}_v$ is a B\'ezout ring of $F$ with $p^*(\mc{O}_v)\leq 2^m$.

Since $s(Fv)= 2^m$, it follows by \Cref{Levels of residues} that $\mc{O}_v$ is square-effective of order $m$. 
Hence $F$ is $m$-effective, whereby $p(F)\leq 2^m+1$, in view of \Cref{Bound for k-effective}.
This proves that $p(F)=2^{m}+1$.
\end{proof}

\begin{ex}\label{Tikhonov curve}
Let $F$ denote the function field of the elliptic curve  $$Y^2=(tX-1)(X^2+1)$$ over $\rr(\!(t)\!)$.
Applying \Cref{Hyperelliptic} to $k=\rr$ and $h=X^2+1$, we obtain a new argument that 
$$p(F)=3\,.$$

The observation that $3\leq p(F)\leq 4$ goes back to \cite[Example 3.10]{TVGY06}, and the equality $p(F)=3$ was obtained in \cite[Corollary 6.13]{BGVG14}, by a less elementary method.
\end{ex}

\begin{ex}\label{Motzkin curve}
As mentioned in the introduction, it was shown in \cite{CEP71} that 
the Motzkin polynomial $M(X,Y)=X^2Y^4+X^4Y^2-3X^2Y^2+1$ is a sum of $4$ squares but not a sum of $3$ squares in $\rr(X,Y)$.
We consider the field $K=\rr (Y)(\!(t)\!)$.
Note that $M(X,Y)$ is irreducible in $\rr(Y)[X]$, and hence also in $K[X]$, and that 
$M(X,Y)\in\sos{4}{K(X)}\setminus\sos{3}{K(X)}$.
We now consider the field $$F=K(X)\left(\sqrt{(tX-1)M(X,Y)}\right)\,.$$
By \Cref{Hyperelliptic}, we obtain that $p(F)=5$.
\end{ex}

Note that \Cref{Tikhonov curve} and \Cref{Motzkin curve} imply that the bounds in \Cref{P:fufi-Pn-Pythagoras} and \Cref{Complete base field} below are sharp for $n=1$ and $n=2$.
After these examples of particular function fields in one variable, we now turn to consider 
base fields where our method gives us a bound on the Pythagoras numbers of all function fields in one variable.

Recall that by a \emph{function field in one variable} we mean a finitely generated field extension of transcendence degree $1$. 

Let $n \in \nat$ and let $K$ be a field. 
We call $K$ a \emph{$\mathcal{P}_n$-field} if $p(K(X))\leq 2^{n+1}$ and if every function field in one variable $F/K$ with $p(F)>2^{n+1}$ is $(n+1)$-effective.

\begin{ex}\label{FPS}
If $p(E)\leq 2^{n+1}$ holds for every function field in one variable $E/K$,
then $K$ is a $\mc P_n$-field.
This applies in particular to the following situations:
\begin{enumerate}[$(i)$]
    \item For any $n\geq 0$ to $K=\rr(X_1,\dots,X_{n})$, by \cite[Theorem 1]{Pfi67}; see also \cite[Theorem XI.4.10]{Lam05}. For $n=0$ this result goes back to \cite{Witt34}.
    \item More generally, if $K(\sqrt{-1})$ is a $\mc{C}_n$-field, in terms of Tsen-Lang theory (see e.g.~\cite[\S 2.15]{Scha85}).
    Indeed, this implies for any function field in one variable $F/K$ that $F(\sqrt{-1})$ is a $\mc{C}_{n+1}$-field. 
    In particular, every $(n+1)$-fold Pfister form over $F$ represents all elements of $F(\sqrt{-1})$, whereby \cite[Corollary XI.4.9]{Lam05} yields that $p(F)\leq 2^{n+1}$.
    \item For $n\geq 2$ to $K=\qq(X_1,\dots,X_{n-1})$.
    This follows from \cite[Theorem 4.1.2~$(c)$]{CTJ91}, which 
    relies on two deep facts \cite[Conjectures 2.1 and 2.5]{CTJ91} 
 proven later in \cite[Theorem 0.1]{Ja16} and \cite[Theorem 4.1]{OVV07}.
 \end{enumerate}
\end{ex}

The interest of the notion of $\mc{P}_n$-field lies in the following consequence.

\begin{prop}\label{P:fufi-Pn-Pythagoras}
Let $n\in\nat$ and let $K$ be a $\mc{P}_n$-field.
Then $p(F)\leq 2^{n+1}+1$ for every function field in one variable $F/K$.
\end{prop}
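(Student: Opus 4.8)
The plan is to unwind the definition of a $\mathcal{P}_n$-field and argue by a simple case distinction on the value of $p(F)$. Fix a function field in one variable $F/K$. If $p(F)\leq 2^{n+1}$, then a fortiori $p(F)\leq 2^{n+1}+1$, and there is nothing more to prove. Otherwise $p(F)>2^{n+1}$, and the defining property of a $\mathcal{P}_n$-field tells us precisely that $F$ is $(n+1)$-effective.

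In this second case I would then invoke \Cref{Bound for k-effective} with $n$ replaced by $n+1$, which is legitimate since $n+1\in\nat^+$. It yields $2^{n+1}\leq p(F)\leq 2^{n+1}+1$, and in particular the desired upper bound $p(F)\leq 2^{n+1}+1$. Combining the two cases, we conclude that $p(F)\leq 2^{n+1}+1$ for every function field in one variable $F/K$.

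I do not expect any genuine obstacle in this deduction: once the notion of $\mathcal{P}_n$-field is set up, the proposition is a formal consequence of that definition together with \Cref{Bound for k-effective}. The real content of the result lies elsewhere, namely in verifying that the concrete base fields listed in \Cref{FPS} are $\mathcal{P}_n$-fields, and in the machinery behind \Cref{Bound for k-effective} and the notion of $n$-effectiveness, rather than in this short argument.
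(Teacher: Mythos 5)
Your argument is correct and coincides with the paper's own (one-line) proof: the case $p(F)>2^{n+1}$ gives $(n+1)$-effectiveness by the definition of a $\mc{P}_n$-field, and \Cref{Bound for k-effective} then yields the bound, the other case being trivial. Nothing further is needed.
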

\begin{proof}
By \Cref{Bound for k-effective}, this follows from the definition of $\mc P_n$-field.
\end{proof}

We will now show for $n \in \nat$ that the class of $\mc{P}_n$-fields is stable under passage from a field $K$ to the power series field $K(\!(t)\!)$.
To show this, we will use the methods developed in the previous sections.

A function field in one variable $F/K$ is called \emph{ruled} if there exist $\theta \in F$ and a finite field extension $K'/K$ such that $F=K'(\theta)$, and \emph{nonruled} otherwise.

\begin{prop}\label{T:example1}
Let $m \in \nat^+$. 
Let $K$ be a field such that $p(K(X))\leq 2^m$. 
Let $F/K(\!(t)\!)$ be a function field in one variable and let $v\in\Omega_F$. 
Then $K\subseteq \mc{O}_v$, and 
if $p(F^v)>2^m$, then $Fv/K$ is a nonruled function field in one variable.
\end{prop}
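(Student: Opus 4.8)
The plan is to prove the two assertions in turn, the second one by contraposition.

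\textbf{The inclusion $K\subseteq\mco_v$ and the shape of $v|_{K(\!(t)\!)}$.} Since $K[\![t]\!]$ is a complete, hence Henselian, discrete valuation ring with residue field $K$, Hensel's Lemma applied to $X^\ell-g$ (whose reduction $X^\ell-1$ has $1$ as a simple root whenever $\ell\neq\car{K}$) shows that for every prime $\ell\neq\car{K}$ each element of $1+tK[\![t]\!]$ is an $\ell$-th power in $K[\![t]\!]$. As $K[\![t]\!]\subseteq F$, $vF=\zz$, and there are infinitely many such primes, $v$ is trivial on $1+tK[\![t]\!]$. If some $c\in\mg K$ had $v(c)\neq 0$ we could assume $v(c)>0$, choose $k$ with $k\,v(c)>v(t)$, set $d=c^k$, and derive a contradiction from the unit $1-d^{-1}t$ of $K[\![t]\!]$: its inverse $(1-d^{-1}t)^{-1}$ lies in $1+tK[\![t]\!]$, hence $v$ is zero on it, yet $v\big((1-d^{-1}t)^{-1}\big)=v(d)-v(t)>0$. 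So $v$ is trivial on $\mg K$, i.e. $K\subseteq\mco_v$; moreover $v$ is then trivial on $K[\![t]\!]^\times=\mg K\cdot(1+tK[\![t]\!])$, so writing any nonzero element of the discrete valuation ring $K[\![t]\!]$ as $t^e$ times a unit, the restriction $v_0$ of $v$ to $K(\!(t)\!)$ equals $v(t)$ times the $t$-adic valuation. Thus $v_0$ is trivial when $v(t)=0$, and is equivalent to the $t$-adic valuation (so $K(\!(t)\!)v_0=K$) when $v(t)\neq 0$.

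\textbf{Reduction of the second assertion.} Assume $Fv/K$ is \emph{not} a nonruled function field in one variable; I will deduce $p(F^v)\leq 2^m$. If $v(t)=0$, then $v_0$ is trivial, so, $F/K(\!(t)\!)$ being a function field in one variable, $Fv$ is a finite extension of $K(\!(t)\!)$; being finite over a complete discretely valued field, $Fv$ is itself complete discretely valued with residue field $L_0$ a finite extension of $K$. If $v(t)\neq 0$, then $K(\!(t)\!)v_0=K$, and Abhyankar's inequality together with the finite generation of residue extensions of valuations on finitely generated extensions (see e.g. \cite{EP05}) forces $Fv/K$ to be finitely generated of transcendence degree at most $1$; by our assumption it is then either finite over $K$ (put $L_0=Fv$) or a \emph{ruled} function field in one variable, i.e. $Fv$ is $K'$-isomorphic to $K'(X)$ for some finite extension $K'/K$. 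In every case $F^v$ is built from either a finite extension $L_0$ of $K$, or a rational function field $K'(X)$ with $K'/K$ finite, by at most two successive complete discretely valued extensions.

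\textbf{Conclusion via quadratic forms.} I use two standard inputs. First, for a field $E$ complete with respect to a discrete valuation of residue characteristic $\neq 2$, with residue field $\bar E$, one has $s(E)=s(\bar E)$, and $p(E)=p(\bar E)$ when $\bar E$ is real (see e.g. \cite{Scha85}); in particular $p(E)\leq 2^m$ whenever ($\bar E$ real and $p(\bar E)\leq 2^m$) or ($\bar E$ nonreal and $s(\bar E)\leq 2^{m-1}$). Second, comparing leading coefficients of a constant sum of $k$ squares of rational functions gives $\sos{k}{E(X)}\cap E\subseteq\sos{k}{E}$, hence $p(E)\leq p(E(X))$ and $s(E)=s(E(X))$ for any field $E$; and Pfister's criterion recalled in the introduction \cite[Satz 2]{Pfi67} turns $p(K(X))\leq 2^m$ into ``$s(K'')\leq 2^{m-1}$ for every finite nonreal $K''/K$'', and thus, applied over an arbitrary finite $K'/K$ (whose finite nonreal extensions are among those of $K$), into $p(K'(X))\leq 2^m$, whence also $p(K')\leq 2^m$. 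Now walk up the tower from the base field: if that base field ($L_0$, resp. $K'(X)$) is real, all Pythagoras numbers along the tower equal $p(L_0)\leq 2^m$ (resp. $p(K'(X))\leq 2^m$); if it is nonreal, all levels equal $s(L_0)\leq 2^{m-1}$ (resp. $s(K')\leq 2^{m-1}$), so $p(F^v)\leq 2^{m-1}+1\leq 2^m$, using $m\geq 1$. In every case $p(F^v)\leq 2^m$, which is the contrapositive.

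\textbf{The main obstacle.} The delicate point is the reduction step: discarding the degenerate shapes of $Fv$ — finite over $K(\!(t)\!)$, finite over $K$, or rational over a finite extension of $K$ — requires a bound on the Pythagoras number, and on the level, of \emph{every} finite extension of $K$, not just of $K$ itself. Pfister's criterion \cite[Satz 2]{Pfi67} is exactly what supplies this, but it must be combined with a careful real/nonreal bookkeeping along the tower of complete discretely valued fields sitting over the residue field, since the complete-field formula for $p$ genuinely differs in the two cases.
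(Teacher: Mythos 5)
Your skeleton is the same as the paper's: prove the dichotomy for $v|_{K(\!(t)\!)}$ (the paper quotes \cite[Proposition~2.2]{BGVG14}, you rederive it from Hensel's lemma), then argue the contrapositive by splitting into the case where $v$ is trivial on $K(\!(t)\!)$ and, otherwise, into the residually algebraic and the ruled residually transcendental cases, transferring bounds along complete discretely valued towers with real/nonreal bookkeeping (where the paper quotes \cite[Corollary~4.13, Theorem~4.14, Lemma~6.3]{BGVG14} and \cite[Theorem~3.5]{BVG09}, you use Springer-type transfer, the fact that $\sos{k}{E(X)}\cap E\subseteq\sos{k}{E}$, and Pfister's criterion from the introduction). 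That substitution is legitimate and makes the argument more self-contained. However, there is one step that is not justified as written: in the case $v(t)\neq 0$ you claim that Abhyankar's inequality ``together with the finite generation of residue extensions'' forces $Fv/K$ to be finitely generated. The finite-generation part of \cite[Theorem~3.4.3]{EP05} is available only when equality holds in Abhyankar's inequality; here the value group contributes nothing (both $vF$ and $v(K(\!(t)\!)^\times)$ have rational rank $1$), so equality holds exactly when $Fv/K$ has transcendence degree $1$. In the residually algebraic branch nothing prevents $Fv$ from being an infinite algebraic extension of $K$ --- this is precisely why the paper's proof speaks of ``every algebraic extension $L/K$'' rather than every finite one. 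Since your input from Pfister's Satz~2 (bounding $p(K')$ and, in the nonreal case, $s(K')$) is established only for finite $K'/K$, your tower argument does not cover this branch as stated. The repair is easy and is the paper's own remark: any given sum of squares in an algebraic extension $L_0/K$, and any representation of $-1$ if $L_0$ is nonreal, involves finitely many elements and hence lives in a finite subextension, so $p(L_0)\leq 2^m$, and $s(L_0)\leq 2^{m-1}$ when $L_0$ is nonreal, hold for arbitrary algebraic $L_0/K$; with that, your conclusion goes through unchanged.

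Two smaller patches. First, to say that $v|_{K(\!(t)\!)}$ is equivalent to the $t$-adic valuation when $v(t)\neq 0$ you need $v(t)>0$; this is automatic (if $v(t)<0$ then $v(1+t)=v(t)<0$, contradicting your Hensel step), but it should be said, since otherwise ``$K(\!(t)\!)v_0=K$'' is unjustified. Second, the inclusion $\sos{k}{E(X)}\cap E\subseteq\sos{k}{E}$ does not follow from comparing leading coefficients alone: after clearing denominators the top-degree squares could cancel. In that event a nontrivial sum of at most $k$ squares vanishes in $E$, so $s(E)\leq k-1$ and every element of $E$ is a sum of $s(E)+1\leq k$ squares, so the inclusion still holds; alternatively cite Cassels \cite{Cas64}. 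With these repairs your proof is correct and is a sound, essentially self-contained alternative to the paper's citation-based argument.
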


\begin{proof}
If $K(\!(t)\!)\subseteq\mc{O}_v$, then
$p(F^v)\leq p(K(X))\leq 2^m$ by \cite[Lemma 6.3]{BGVG14}.
Assume thus that $K(\!(t)\!)\not\subseteq\mc{O}_v$. 
Since $v\in \Omega_F$, we obtain by \cite[Proposition 2.2]{BGVG14} that $\mc O_{v}\cap K(\!(t)\!)=K[\![t]\!]$.
Hence $K\subseteq Fv$ and $K(\!(t)\!)v=K$.
Since $p(K(X))\leq 2^m$, it follows by \cite[Corollary 4.13]{BGVG14} that $p(L(X)) \leq 2^m$ holds for every finite field extension $L/K$, and hence also for every algebraic extension $L/K$.

Assume that $Fv/K$ is algebraic. 
Then $p(Fv(X))\leq 2^m$, and we conclude by using \cite[Theorem 4.14]{BGVG14} that $ p(F^v(X))\leq 2^m$. 
Thus $p(F^v)\leq 2^m$.

Consider now the case where $Fv/K$ is transcendental.
Then $Fv/K$ is a function field in one variable, by \cite[Theorem 3.4.3]{EP05}.
Suppose that $Fv/K$ is ruled.
Then $Fv=L(\theta)$ for a finite field extension $L/K$ and some element $\theta\in Fv$ which is transcendental over $K$.
In order to show that $p(F^v)\leq 2^m$, we may assume that $\car K\neq 2$.
Since $p(K(X))\leq 2^m$, it then follows by \cite[Corollary 4.13]{BGVG14} that $p(Fv)=p(L(X))\leq 2^m$. 
If $Fv$ is real, then $p(F^v)=p(Fv)\leq 2^m$. 
If $Fv$ is nonreal, then $s(F^v)=s(Fv)< 2^m$ by \cite[Theorem 3.5]{BVG09}, and therefore $p(F^v)=s(F^v)+1\leq 2^m$. 
\end{proof}

\begin{thm}\label{T:example2}
Let $n \in \nat$.
If $K$ is a $\mathcal{P}_n$-field, 
then $K(\!(t)\!)$ is a $\mc P_n$-field.
\end{thm}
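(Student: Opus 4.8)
The plan is to show that $K(\!(t)\!)$ is a $\mathcal{P}_n$-field by verifying the two defining conditions. The first condition, $p\big(K(\!(t)\!)(X)\big)\leq 2^{n+1}$, should follow easily: since $K$ is a $\mathcal{P}_n$-field, $p(K(X))\leq 2^{n+1}$, and one can invoke \cite[Corollary 4.13]{BGVG14} together with the behaviour of the Pythagoras number under the Gauss extension of the $t$-adic valuation, exactly as in the proof of \Cref{T:example1} with $m=n+1$; alternatively one reduces to the fact that $K(\!(t)\!)(X)$ has a $\zz$-valuation with residue field $K(X)$ and applies \cite[Theorem 4.14]{BGVG14} or the results from \cite{BVG09}. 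So the heart of the matter is the second condition: every function field in one variable $F/K(\!(t)\!)$ with $p(F)>2^{n+1}$ is $(n+1)$-effective.

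First I would fix such an $F$ and set $m=n+1$, so $p(F)>2^m$ and $p(K(X))\leq 2^m$. The strategy is to apply \Cref{T:example0}: I must produce a nonempty finite set $V\subseteq\Omega_F$ that characterises sums of $2^m$ squares in $F$ and such that $Fv$ is $m$-effective for every $v\in V$. The natural candidate is $V=\{v\in\Omega_F\mid p(F^v)>2^m\}$. To show $V$ characterises sums of $2^m$ squares, I would use \Cref{C:characterise-criterion}: this requires (a) that quadratic forms $\sum_{i=1}^{2^m}X_i^2-aX_0^2$ over $F$ satisfy the local-global principle with respect to $\Omega_F$, and (b) that $V$ is finite. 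For (a), since $2^m=2^{n+1}$ is a power of $2$, these are Pfister neighbours, and the local-global principle for such forms over a function field in one variable over $K(\!(t)\!)$ should be available — this is where I expect to lean on a known result (e.g. along the lines of the local-global principles used in \cite{BGVG14} or \cite{BVG09}, ultimately traceable to Milnor's exact sequence over $K(\!(t)\!)(X)$ as mentioned in the introduction). For (b), finiteness of $V$ is a standard fact: a quadratic form over a function field in one variable has isotropic completions at all but finitely many places, so the set of bad places is finite; combined with $p(F^v)\leq s(Fv)+1$, only finitely many $v$ can have $p(F^v)>2^m$.

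Next, for each $v\in V$ I need $Fv$ to be $m$-effective. By \Cref{T:example1}, applied with $m=n+1$ (legitimate since $p(K(X))\leq 2^m$), having $p(F^v)>2^m$ forces $Fv/K$ to be a nonruled function field in one variable. Now here is the inductive leverage: $K$ is a $\mathcal{P}_n$-field, hence by \Cref{P:fufi-Pn-Pythagoras} $p(Fv)\leq 2^{n+1}=2^m$. If $p(Fv)\leq 2^m$ one would like to conclude $Fv$ is $m$-effective — but $m$-effectiveness is not automatic from the Pythagoras bound. The right move is to observe that $p(F^v)>2^m$ together with $p(Fv)\leq 2^m$ means passing to the completion strictly increased the Pythagoras number, which by the structure theory of complete discretely valued fields (and $p(F^v)\leq s(Fv)+1$-type bounds) pins down $s(Fv)$ or forces $Fv$ into the situation handled by the $\mathcal{P}_n$-hypothesis: since $Fv/K$ is a function field in one variable with $p(Fv)$ possibly up to $2^{n+1}$, and $K$ is a $\mathcal{P}_n$-field, if $p(Fv)>2^{n+1}$ then $Fv$ is $(n+1)$-effective by definition — but we just bounded $p(Fv)\leq 2^{n+1}$, so this does not directly apply. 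I would instead argue $F$ itself is $(n+1)$-effective directly from the construction in \Cref{Inductive step}/\Cref{T:example0} using the residue fields $Fv$: for this I should reinterpret the hypothesis, realising that what \Cref{T:example0} actually needs is not that each $Fv$ has small Pythagoras number, but that each $Fv$ admits a semilocal Bézout ring square-effective of order $m$ with $p^*\leq 2^m$. When $p(F^v)>2^m$ and $Fv/K$ is nonruled, the $\mathcal{P}_n$-hypothesis on $K$ — correctly, that nonruled function fields in one variable over $K$ with large Pythagoras number are $(n+1)$-effective — should be invoked after checking $p(F^v(X))$ or $p(Fv)$ relations; concretely, I expect $p(Fv)>2^{n+1}$ to in fact hold in the relevant cases via \cite[Theorem 3.5]{BVG09} or \cite[Theorem 4.14]{BGVG14} relating $p(Fv)$ and $p(F^v)$, so that $Fv$ is $(n+1)$-effective, i.e. $m$-effective, by the $\mathcal{P}_n$-hypothesis applied to $K$. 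The main obstacle, and the step I would scrutinise most carefully, is precisely this bookkeeping: correctly transferring "$p(F^v)$ large" into "$Fv$ is $m$-effective" via the $\mathcal{P}_n$-property of the smaller field $K$, making sure the ruled/nonruled dichotomy and the completion-versus-residue-field comparison line up so that the inductive hypothesis genuinely applies. Once every $Fv$ is $m$-effective and $V$ characterises sums of $2^m$ squares, \Cref{T:example0} gives that $F$ is $m=(n+1)$-effective, completing the verification that $K(\!(t)\!)$ is a $\mathcal{P}_n$-field.
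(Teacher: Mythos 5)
Your overall strategy is the same as the paper's: verify $p(K(\!(t)\!)(X))\leq 2^{n+1}$ via \cite[Theorem 4.14]{BGVG14}, and for a function field in one variable $F/K(\!(t)\!)$ with $p(F)>2^{n+1}$ take $V=\{v\in\Omega_F\mid p(F^v)>2^{n+1}\}$, show it is finite, nonempty and characterises sums of $2^{n+1}$ squares via \Cref{C:characterise-criterion}, and conclude with \Cref{T:example1} and \Cref{T:example0}. However, your justification of the finiteness of $V$ does not work: it is false in this setting that a quadratic form is isotropic over $F^v$ for all but finitely many $v\in\Omega_F$ (over $F=\rr(\!(t)\!)(X)$, for instance, $X_1^2+X_2^2+X_3^2$ stays anisotropic over $F^v$ for the infinitely many $v$ with real residue field), and in any case $V$ is cut out by the infinitely many forms $X_1^2+\dots+X_{2^{n+1}}^2-aX_0^2$ with $a\in\mg{(\sums{F})}$, not by a single form. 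The correct route, which your own citation of \Cref{T:example1} nearly hands you, is the paper's: every $v\in V$ has $Fv/K$ a nonruled function field in one variable, and the set of $v\in\Omega_F$ with nonruled residue function field is finite by \cite[Corollary 3.9]{BGVG14}; this geometric finiteness result is the missing ingredient and nothing in your proposal replaces it. The local-global input should also be pinned down: it is \cite[Theorem 3.1]{CTPS12}, valid for all forms in at least three variables over $F$; the Milnor-sequence argument you allude to concerns only the rational function field $K(\!(t)\!)(X)$ and is used in the paper only for the explicit curves of Section 4. (Nonemptiness of $V$, required by \Cref{T:example0}, also needs a line: if $V=\emptyset$, the characterisation would force $p(F)\leq 2^{n+1}$.)

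The second genuine gap is the step you yourself flag as the ``main obstacle'': transferring $p(F^v)>2^{n+1}$ into $(n+1)$-effectiveness of $Fv$. Your appeal to \Cref{P:fufi-Pn-Pythagoras} is a misquote --- it gives $p(Fv)\leq 2^{n+1}+1$, not $\leq 2^{n+1}$ --- and you leave the decisive point unresolved, so the inductive hypothesis is never actually brought to bear. The needed argument is short. For $v\in V$ the residue characteristic is not $2$, so if $Fv$ is real then $p(F^v)=p(Fv)>2^{n+1}$ and the $\mc{P}_n$-hypothesis applies to the nonruled function field in one variable $Fv/K$; if $Fv$ is nonreal then $p(F^v)=s(Fv)+1$, hence $s(Fv)\geq 2^{n+1}$, and either $p(Fv)>2^{n+1}$ (hypothesis applies) or $s(Fv)=p(Fv)=2^{n+1}$, in which case $Fv$ is $(n+1)$-effective by \Cref{EX:s=p-n-effective}. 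This dichotomy is exactly the bookkeeping carried out in the paper in the proof of \Cref{Pythagoras FPS in 2 variable}, and with it (plus the finiteness input above) your outline becomes the paper's proof.
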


\begin{proof}
Let $K$ be a $\mc{P}_n$-field.
If $\car K=2$, then $\ffps{K}{t}$ is trivially a $\mc P_n$-field.
Assume now that $\car K\neq 2$.
Since $p(K(X))\leq 2^{n+1}$, we get by \cite[Theorem 4.14]{BGVG14} that $p(K(\!(t)\!)(X))\leq 2^{n+1}$.
Consider a function field in one variable $F/K(\!(t)\!)$ with $p(F)>2^{n+1}$.
Let $V=\{v\in\Omega_F\mid p(F^v)>2^{n+1}\}$ and let  $$W=\{w\in\Omega_F\mid \mbox{$Fw/K$ nonruled function field in one variable}\}\,.$$ 
Then $V\subseteq W$, by \Cref{T:example1}, and $W$ is finite, by \cite[Corollary 3.9]{BGVG14}. Therefore $V$ is finite.
By \cite[Theorem 3.1]{CTPS12}, quadratic forms in at least $3$ variables over $F$ satisfy the local-global principle with respect to $\Omega_F$.
Since $2^{n+1}+1\geq 3$, we conclude by \Cref{C:characterise-criterion} that $V$ characterises sums of $2^{n+1}$ squares in $F$.

In particular, since $p(F)>2^{n+1}$, we obtain that $V\neq \emptyset$.
Consider any $v\in V$. Then $p(F^v)>2^{n+1}$,
 and hence either $s(Fv)=2^{n+1}$ or $p(Fv)>2^{n+1}$.
In view of \Cref{EX:s=p-n-effective}, and because $Fv/K$ is a function field in one variable and $K$ is a $\mc{P}_n$-field, we obtain in either case that $Fv$ is $(n+1)$-effective.
Now, given that $Fv$ is $(n+1)$-effective for every $v \in V$, we conclude by \Cref{T:example0} that
$F$ is $(n+1)$-effective. 

This argument shows that $K(\!(t)\!)$ is a $\mc P_n$-field.
\end{proof}

We retrieve the following statement contained in \cite[Theorem 6.13]{BGVG14}.

\begin{cor}\label{Complete base field}
Let $n,r\in\nat$.
Let $K$ be a field such that $p(E)\leq 2^{n+1}$ for every function field in one variable $E/K$.
Let $F$ be a function field in one variable over $K(\!(t_1)\!)\dots(\!(t_r)\!)$.
Then
$p(F)\leq 2^{n+1}+1$.
\end{cor}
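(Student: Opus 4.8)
The plan is to derive \Cref{Complete base field} as an iterated application of \Cref{T:example2}, bootstrapped by the hypothesis on $K$ via \Cref{FPS} and then concluded with \Cref{P:fufi-Pn-Pythagoras}. First I would observe that the hypothesis ``$p(E)\leq 2^{n+1}$ for every function field in one variable $E/K$'' is exactly the condition in \Cref{FPS}, so $K$ is a $\mathcal P_n$-field. Then I would induct on $r$: the base case $r=0$ is the statement that $K$ itself is a $\mathcal P_n$-field, and the inductive step applies \Cref{T:example2} to pass from $K(\!(t_1)\!)\dots(\!(t_{r-1})\!)$ being a $\mathcal P_n$-field to $K(\!(t_1)\!)\dots(\!(t_r)\!) = \big(K(\!(t_1)\!)\dots(\!(t_{r-1})\!)\big)(\!(t_r)\!)$ being a $\mathcal P_n$-field as well. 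Once $K(\!(t_1)\!)\dots(\!(t_r)\!)$ is known to be a $\mathcal P_n$-field, \Cref{P:fufi-Pn-Pythagoras} immediately gives $p(F)\leq 2^{n+1}+1$ for every function field in one variable $F$ over it.

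\begin{proof}
Since $p(E)\leq 2^{n+1}$ holds for every function field in one variable $E/K$, the field $K$ is a $\mc P_n$-field by \Cref{FPS}.
We show by induction on $r\in\nat$ that $K(\!(t_1)\!)\dots(\!(t_r)\!)$ is a $\mc P_n$-field.
For $r=0$ this is the statement that $K$ is a $\mc P_n$-field, which we have just observed.
Assume now that $r\geq 1$ and that $K(\!(t_1)\!)\dots(\!(t_{r-1})\!)$ is a $\mc P_n$-field.
We have $K(\!(t_1)\!)\dots(\!(t_r)\!)=\big(K(\!(t_1)\!)\dots(\!(t_{r-1})\!)\big)(\!(t_r)\!)$, and hence \Cref{T:example2} shows that $K(\!(t_1)\!)\dots(\!(t_r)\!)$ is a $\mc P_n$-field.
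This completes the induction.
In particular $K(\!(t_1)\!)\dots(\!(t_r)\!)$ is a $\mc P_n$-field, so by \Cref{P:fufi-Pn-Pythagoras} we obtain $p(F)\leq 2^{n+1}+1$ for every function field in one variable $F$ over $K(\!(t_1)\!)\dots(\!(t_r)\!)$.
\end{proof}

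There is essentially no obstacle here: the corollary is a formal consequence of the three earlier results, and the only point requiring attention is the purely syntactic one of correctly identifying the iterated power series field as $\big(K(\!(t_1)\!)\dots(\!(t_{r-1})\!)\big)(\!(t_r)\!)$ so that \Cref{T:example2} applies with base field $K(\!(t_1)\!)\dots(\!(t_{r-1})\!)$. The substantive content all lives in \Cref{T:example2} (stability of the $\mc P_n$-property under $K\mapsto K(\!(t)\!)$), whose proof in turn rests on the finiteness of the bad valuation set $V$ via \cite[Corollary 3.9]{BGVG14}, the local-global principle of \cite[Theorem 3.1]{CTPS12}, and the gluing construction of \Cref{T:example0}; in the present corollary those ingredients are simply invoked through the induction.
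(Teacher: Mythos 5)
Your proof is correct and follows essentially the same route as the paper: invoke \Cref{FPS} to see that $K$ is a $\mc P_n$-field, iterate \Cref{T:example2} to transfer this to $K(\!(t_1)\!)\dots(\!(t_r)\!)$, and conclude via the definition of $\mc P_n$-field together with \Cref{Bound for k-effective} (which is exactly what \Cref{P:fufi-Pn-Pythagoras} packages). The only difference is cosmetic: you spell out the induction on $r$ and cite \Cref{P:fufi-Pn-Pythagoras} explicitly, whereas the paper compresses both steps into one sentence.
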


\begin{proof}
It follows by \Cref{FPS}, via an iterated application of \Cref{T:example2}, that $K(\!(t_1)\!)\dots(\!(t_r)\!)$ is a $\mc P_n$-field. 
Hence, either $p(F)\leq 2^{n+1}$ or $F$ is $(n+1)$-effective.
In view of \Cref{Bound for k-effective}, we conclude that $p(F)\leq 2^{n+1}+1$.
\end{proof}

\section{Geometric global fields}\label{dim 2} 

It is shown in \cite{Hu15} that $p(F)\leq 3$ for every finite field extension $F$ of  $\ffps{\rr}{t_1,t_2}$. 
In this section, we show that one can also obtain this bound by means of \Cref{Bound for k-effective}.
To this aim, we study the discrete valuations on a finite field extension of the fraction field of a complete noetherian local domain.

Let $R$ be a commutative ring. By the \emph{dimension of $R$} we refer to its Krull dimension and we denote it by $\dim R$.
Assume now that $R$ is a local ring. We denote by $\mfm_R$ its unique maximal ideal and by $\kappa_R$ the residue field $R/\mfm_R$.
Recall that $R$ is called \emph{henselian} if for every monic polynomial $f\in R[X]$, any simple root of $f$ in $\kappa_R$ is the residue of a root of $f$ in $R$.
By \cite[Lemma 10.153.9]{Sta} 
every complete local domain is henselian.

We will mostly focus on the case where $R$ is $2$-dimensional.
We will show that in this case the fraction field of $R$ has only finitely many discrete valuation rings whose residue field is a nonruled function field in one variable over $\kappa_R$.

\begin{prop}\label{Cohen structure}
Let $d \in \nat^+$ and let $R$ be a complete noetherian local domain with $\dim R=d$. 
Then there exist subrings $\mc{O}\subseteq R_0\subseteq R$ such that $\mc{O}$ is
a complete discrete valuation ring with residue field  $\kappa_R$, $R_0 \simeq \rfps{\mc O}{t_1,\dots,t_{d-1}}$ and $R$ is a finite $R_0$-algebra.
\end{prop}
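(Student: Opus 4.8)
The statement to prove is the classical Cohen structure theorem for complete noetherian local domains, in the form that produces a coefficient discrete valuation ring $\mc{O}$ with residue field $\kappa_R$, a formal power series subring $R_0 \simeq \rfps{\mc{O}}{t_1,\dots,t_{d-1}}$, and a finite extension $R_0 \subseteq R$. I would organize the argument in three stages: first construct a complete DVR $\mc{O} \subseteq R$ mapping onto the residue field $\kappa_R$; second build the power series subring $R_0$ by choosing a system of parameters; third invoke a Noether-normalization-type argument to see that $R$ is finite over $R_0$.

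For the first stage, I would appeal to the classical Cohen coefficient-ring theorem: since $R$ is complete local noetherian, it admits a coefficient ring, and since $R$ is moreover a domain, this coefficient ring can be taken to be a field (in the equicharacteristic case) or a complete DVR with residue field $\kappa_R$ (in the mixed-characteristic case, $\car R = 0$, $\car \kappa_R = p$). In the equicharacteristic case one sets $\mc{O}$ to be a DVR of the form $\K[\![t]\!]$ over a coefficient field $\K \cong \kappa_R$ sitting inside $R$, which one can embed into $R$ because $\dim R = d \geq 1$ guarantees that $\mfm_R \neq 0$, so one may pick $t \in \mfm_R$ nonzero and the induced map $\K[\![t]\!] \to R$ is injective (as $R$ is a domain and $\bigcap_n \mfm_R^n = 0$). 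So in all cases we get $\mc{O} \subseteq R$ a complete DVR with $\mc{O}/\mfm_{\mc{O}} \xrightarrow{\sim} \kappa_R$; I would cite \cite[Theorem~10.160.8]{Sta} or the analogous statement in Matsumura for the coefficient-ring existence.

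For the second stage, choose a uniformizer $\varpi$ of $\mc{O}$ (so $\varpi \in \mfm_R$) and extend it to a full system of parameters $\varpi = t_0, t_1, \dots, t_{d-1}$ for the $d$-dimensional local ring $R$; such a system exists since $\dim R = d$. Then $(t_0,\dots,t_{d-1})$ is $\mfm_R$-primary. Define $R_0$ to be the image of the continuous $\mc{O}$-algebra homomorphism $\rfps{\mc{O}}{t_1,\dots,t_{d-1}} \to R$ sending the indeterminates to $t_1,\dots,t_{d-1}$; this is well-defined by $\mfm_R$-adic completeness of $R$. The key point is injectivity: one shows the kernel is zero by a dimension count — $\rfps{\mc{O}}{t_1,\dots,t_{d-1}}$ has dimension $d$ (equal to $\dim \mc{O} + (d-1)$), and any nonzero kernel would drop the dimension of the image below $d$; but then $R$, being finite over its image (stage three), would have dimension $< d$, a contradiction. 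So $R_0 \simeq \rfps{\mc{O}}{t_1,\dots,t_{d-1}}$.

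For the third stage, I would use the standard fact that a complete noetherian local ring $R$ with system of parameters $t_0,\dots,t_{d-1}$ is module-finite over the complete regular local subring generated by those parameters — this is the local analogue of Noether normalization, proved via the Weierstrass preparation/division theorem for formal power series (or equivalently: $R$ is $\mfm_R$-adically complete and $R/(t_0,\dots,t_{d-1})R$ is an artinian, hence finite, $\mc{O}/\mfm_{\mc{O}}$-module, so by the completeness and a graded Nakayama argument $R$ is finite over $R_0$). I would cite \cite[Lemma~10.160.3]{Sta} or Bourbaki (\emph{Commutative Algebra}, Ch.~IX) for this. The main obstacle is ensuring the interlocking of stages two and three is not circular: one must establish finiteness of $R$ over $R_0$ \emph{independently} of knowing $\dim R_0 = d$, e.g.\ by the Nakayama-type argument on $R/\mfm_{R_0}R$ directly, and only afterwards deduce both that $\dim R_0 = d$ and that $R_0 \to R$ has trivial kernel. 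With that ordering fixed, all three subrings $\mc{O} \subseteq R_0 \subseteq R$ and the asserted properties follow.
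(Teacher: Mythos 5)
Your proof is correct, but it takes a genuinely more self-contained route than the paper. The paper's proof is essentially a citation: \cite[Lemma 10.161.11]{Sta} already provides a subring $R_0\subseteq R$ with $R$ finite over $R_0$ and $R_0$ isomorphic either to $\rfps{\kappa_R}{t_1,\dots,t_d}$ or to $\rfps{\mc O}{t_1,\dots,t_{d-1}}$ for a complete discrete valuation ring $\mc O$ with residue field $\kappa_R$, and the only remaining step is the equicharacteristic case, which the paper settles by regrouping variables, i.e.\ taking $\mc O=\rfps{\kappa_R}{t_d}$ inside $R_0$. You instead re-derive that lemma from lower-level ingredients: Cohen's coefficient field/ring theorem, a system of parameters beginning with a uniformizer of $\mc O$, the completeness-plus-Nakayama finiteness criterion applied to $R/\mfm_{R_0}R$, and a dimension count for injectivity; in particular your treatment of the equicharacteristic case differs, since you manufacture $\mc O$ directly inside $R$ as a power series ring over a coefficient field in an arbitrary nonzero $t\in\mfm_R$. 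This works, and you correctly defuse the potential circularity by establishing finiteness before injectivity (note that what must have trivial kernel is the map $\rfps{\mc O}{T_1,\dots,T_{d-1}}\to R$, since $R_0$ is by definition its image). It is worth making explicit where the domain hypothesis enters: a nonzero $t\in\mfm_R$ (respectively $p$ in mixed characteristic) avoids the unique minimal prime, which is exactly what allows it to be completed to a system of parameters; and the image $R_0$ is a domain, so a nonzero kernel in the $d$-dimensional regular local ring $\rfps{\mc O}{T_1,\dots,T_{d-1}}$ would force $\dim R_0\leq d-1$, contradicting $\dim R_0=\dim R=d$, which follows from finiteness. Your route buys independence from the packaged structure theorem for complete local domains at the cost of length, while the paper's two-line argument buys brevity; also double-check your Stacks numbers, since the statement you are reproving is precisely the one the paper cites.
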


\begin{proof}
By \cite[Lemma 10.161.11]{Sta}, there exists a complete regular local domain $R_0$ such that $R$ is a finite $R_0$-algebra and such that $R_0$ is either isomorphic  to $\kappa_{R}[\![t_1,\dots, t_d]\!]$ or to $\mc O[\![t_1,\dots, t_{d-1}]\!]$ for  a complete discrete valuation ring $\mc O$ with residue field $\kappa_R$.
In the first case, we let $\mc O=\rfps{\kappa_R}{t_d}$, which is then a complete discrete valuation ring with residue field $\kappa_R$.
\end{proof}

\begin{lem}\label{centred}
Let $F$ be a field, $R$ a henselian local subring of $F$ and $\mc O$ a discrete valuation ring of $F$ such that $R=\mfm_R+R\cap \mc O$.
Then $R\subseteq \mc O$.
\end{lem}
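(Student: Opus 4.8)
Let $F$ be a field, $R$ a henselian local subring of $F$ and $\mc O$ a discrete valuation ring of $F$ such that $R=\mfm_R+R\cap \mc O$. Then $R\subseteq \mc O$.

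The plan is to reduce the statement to the inclusion $\mfm_R\subseteq\mc O$. This reduction suffices: once $\mfm_R\subseteq\mc O$, we get $\mfm_R\subseteq R\cap\mc O$, and then the hypothesis $R=\mfm_R+(R\cap\mc O)$ collapses to $R=R\cap\mc O$, that is, $R\subseteq\mc O$. Note that this final step is the only place where the hypothesis $R=\mfm_R+(R\cap\mc O)$ is used; everything else will rely only on $R$ being henselian local and $\mc O$ being \emph{discrete}.

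To prove $\mfm_R\subseteq\mc O$, let $v$ be the valuation attached to $\mc O$, so $vF=\zz$. Fix $m\in\mfm_R$ and suppose toward a contradiction that $v(m)<0$. Set $u=(1+m)^{-1}$, which lies in $\mg R$ because $1+m\notin\mfm_R$. Since $v(1)=0\neq v(m)$, the ultrametric inequality gives $v(1+m)=v(m)$, hence $v(u)=-v(m)=:k\geq 1$; moreover $u=1-um\equiv 1\pmod{\mfm_R}$. Now choose a prime $n$ with $n\nmid k$ and $n\neq\car{\kappa_R}$, which is possible since only finitely many primes divide $k$ and at most one equals $\car{\kappa_R}$. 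The monic polynomial $T^n-u\in R[T]$ reduces modulo $\mfm_R$ to $T^n-1$, which has $T=1$ as a root, and this root is simple because the derivative $nT^{n-1}$ takes the value $n\neq 0$ in $\kappa_R$. As $R$ is henselian, $T^n-u$ has a root $w\in R$ with $w\equiv 1\pmod{\mfm_R}$; in particular $w\neq 0$ and $w^n=u$, so $n\cdot v(w)=v(u)=k$ in $\zz$, contradicting $n\nmid k$. Hence $v(m)\geq 0$, that is, $m\in\mc O$.

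I do not expect a serious obstacle here; the one point that needs care is the choice of the auxiliary prime $n$, which must simultaneously avoid the (finitely many) prime divisors of $k$ and the residue characteristic $\car{\kappa_R}$, the latter precisely to keep $T=1$ a \emph{simple} root modulo $\mfm_R$ so that henselianity applies. The essential inputs are exactly the two hypotheses beyond the inclusion-type condition: discreteness of $\mc O$ (used via $vF=\zz$, so that "$n\mid v(u)$ for an arbitrarily large prime $n$" is absurd) and henselianity of $R$ (used to extract an $n$-th root of the unit $u$). An explicit counterexample with $R$ a non-henselian discrete valuation ring shows that henselianity cannot be dropped.
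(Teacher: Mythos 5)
Your proof is correct and takes essentially the same approach as the paper: both exploit henselianity to lift roots of $X^n-u$ for a unit $u\equiv 1\bmod \mfm_R$ with $n$ prime to $\car{\kappa_R}$, and use the discreteness of the value group to force $\mfm_R\subseteq\mc{O}$, after which $R=\mfm_R+(R\cap\mc{O})\subseteq\mc{O}$. The only cosmetic difference is that you argue by contradiction with a single well-chosen prime $n\nmid k$, whereas the paper notes $1+\mfm_R\subseteq F^{\times n}$ for all $n$ coprime to $\car{\kappa_R}$ and concludes $1+\mfm_R\subseteq\mg{\mc{O}}$ directly.
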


\begin{proof}
Since $R$ is henselian, we have $1+\mfm_R \subseteq F ^{\times n}$ for any $n \in \nat$ coprime to $\car{\kappa_R}$. 
Since this holds for infinitely many natural numbers $n$ and $\mc{O}$ is a discrete valuation ring, we conclude that $1+\mfm_R\subseteq \mg{\mc O}$. 
In particular $\mfm_R\subseteq \mc O$. Therefore $R=\mfm_R+R\cap \mc O\subseteq \mc O$.
\end{proof}

Let $F$ be a field.
We denote by $\Omega_{F}$ the set of $\zz$-valuations on $F$. 
Given $v\in \Omega_F$ and a subring $R\subseteq F$, we say that $v$ \emph{is centred on $R$} if $R\subseteq \mc O_v$; in this case, for $\mf p \in \mathsf{Spec}(R)$, we say that $v$ \emph{is centred on $R$ in $\mf p$} if $\mfm_v \cap R= \mf p$. 

Assume now that $F$ is the function field of an integral scheme $\mc X$. 
For $x \in \mc X$, we denote by $\mc O_{\mc X, x}$ the stalk of $\mc X$ at $x$, by $\mfm_x$ its maximal ideal, and we set $\kappa(x)=\mc O_{\mc X, x}/\mfm_ x$.
For $v \in \Omega_F$, we say that $v$ \emph{is centred on $\mc X$ in $x$} if $\mc O_{\mc X, x}\subseteq \mc O_v$ and $\mfm_v \cap \mc O _{\mc X, x}= \mfm_ x$. 
\smallskip

In the sequel let $R$ be a complete regular local domain that is not a field.
We denote by $E$ the fraction field of $R$ and we consider a finite field extension $F/E$.

\begin{prop}\label{base ring}\label{P:center-nonzeroprime}
Every $\zz$-valuation on $F$ is centered on $R$ in a nonzero prime ideal of $R$.
\end{prop}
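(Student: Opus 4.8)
The claim is that every $\zz$-valuation $v$ on $F$ (a finite extension of the fraction field $E$ of a complete regular local domain $R$ that is not a field) is centred on $R$ in some nonzero prime. First I would observe that $v$ restricts to a valuation (not necessarily discrete) on $E$, whose valuation ring $\mc O_v \cap E$ dominates or at least meets $R$ in a meaningful way; the key reduction is to $R$ itself, so I would set $\mfm = \mfm_v \cap R$ and aim to show (i) $R \subseteq \mc O_v$ and (ii) $\mfm \neq \textbf{0}_R$.

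For (i): I want to invoke \Cref{centred}. Since $R$ is a complete regular local domain, it is henselian by \cite[Lemma 10.153.9]{Sta}. So it suffices to check the hypothesis $R = \mfm_R + (R \cap \mc O_v)$. The way to get this is to argue that $\mc O_v$ contains enough of $R$: take any $x \in R$; if $v(x) \geq 0$ we are done, so suppose $v(x) < 0$. Here I would use that $R$ has a coefficient-type structure — by \Cref{Cohen structure} applied with $d = \dim R \geq 1$, $R$ is finite over a subring $R_0 \simeq \rfps{\mc O_0}{t_1,\dots,t_{d-1}}$ where $\mc O_0$ is a complete DVR with residue field $\kappa_R$ — and the point is that a complete DVR $\mc O_0$, being henselian, has $1 + \mfm_{\mc O_0} \subseteq F^{\times n}$ for all $n$ coprime to the residue characteristic, and the same for the power series variables $t_i$; combined with the fact that a $\zz$-valuation can absorb only boundedly many $n$-th power obstructions, one forces $\mfm_{R_0} \subseteq \mc O_v$, hence (since $R$ is integral over $R_0$ and $\mc O_v$ is integrally closed) $R \subseteq \mc O_v$. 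Actually the cleanest route is: $\mfm_{R}$ is the radical of $\mfm_{R_0}R$, each generator of $\mfm_{R_0}$ lies in infinitely many $F^{\times n}$ via henselianity, a $\zz$-valuation cannot have $v(y) \neq 0$ for $y \in \bigcap_n F^{\times n}$ with $n$ ranging over an infinite coprime-to-$p$ set, so $v(\mfm_{R_0}) \geq 0$, so $v$ is nonnegative on $\mfm_R$ and on $R$ itself after using integrality, giving $R = \mfm_R + R \cap \mc O_v$ trivially and then \Cref{centred}; even more directly one gets $R \subseteq \mc O_v$ outright.

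For (ii): suppose for contradiction that $\mfm = \mfm_v \cap R = \textbf{0}_R$. Then the residue homomorphism $\pi_v$ restricted to $R$ is injective, so it extends to an embedding of $E = \mathrm{Frac}(R)$ into the residue field $Fv$, and hence $Fv$ contains (a copy of) $E$, which has the same transcendence degree over the prime field as $R$ does. But a $\zz$-valuation, being discrete of rank one, satisfies the Abhyankar-type inequality $\trdeg{\text{(prime field)}}{Fv} + 1 \leq \trdeg{\text{(prime field)}}{F}$ when the value group has rank one and is finitely generated; since $\trdeg{}{F} = \trdeg{}{E} = \dim R$ (as $R$ is regular local, excellent, with $\mathrm{Frac}(R) = E$ of transcendence degree $\dim R$ over its prime subfield in the equicharacteristic case, and $\dim R - 1$ plus one for the DVR in the mixed case — in all cases the numerics force a drop), we get $\trdeg{}{Fv} \leq \dim R - 1 < \trdeg{}{E}$, contradicting $E \hookrightarrow Fv$. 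So $\mfm \neq \textbf{0}_R$.

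\textbf{Main obstacle.} The delicate point is (ii), getting the transcendence-degree drop to be strict, because the mixed-characteristic case behaves differently: if $R$ has mixed characteristic $(0,p)$, then $\trdeg{}{E}$ over $\qq$ is $\dim R - 1$, not $\dim R$, so one must argue via the residue field of the coefficient DVR $\mc O_0$ rather than via transcendence degree alone — here the right statement is that $v$ cannot be trivial on $R$ because then $\mc O_v$ would contain the fraction field $E$ of a ring of Krull dimension $\geq 1$ while having value group $\zz$, which is impossible since $\mc O_v$ would then be a $\zz$-valuation ring trivial on a subfield $E$ with $[F:E] < \infty$, forcing the value group to be trivial. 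This last argument is clean in the equal characteristic case but in mixed characteristic requires separating off the $p$-adic part; I expect handling that case uniformly — perhaps by reducing to $R_0 = \mc O_0[\![t_1,\dots,t_{d-1}]\!]$ and noting $v$ restricted to $\mc O_0$ is either trivial (then argue with the $t_i$) or a multiple of the $p$-adic valuation (then $\mfm_v \cap R \ni p \neq 0$) — to be the step needing the most care.
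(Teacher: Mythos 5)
Your step (i) is where the genuine gap lies. Henselianity of $R_0$ (or of $R$) gives $1+\mfm_{R_0}\subseteq F^{\times n}$ for every $n$ coprime to $\car{\kappa_R}$; it does \emph{not} make the generators of $\mfm_{R_0}$ themselves $n$-th powers. Indeed $v(t_1)=1$ for the $t_1$-adic valuation, so your claim that each generator lies in infinitely many $F^{\times n}$ and hence has value $0$ is false, and even granting it, a general element of $\mfm_{R_0}$ is an $R_0$-combination of the generators, so you could not conclude $v\geq 0$ on $\mfm_{R_0}$ without already knowing $v\geq 0$ on $R_0$. What henselianity really yields (the first half of the proof of \Cref{centred}) is only $\mfm_{R}\subseteq\mc O_v$; after that, the hypothesis $R=\mfm_R+(R\cap \mc O_v)$ of \Cref{centred} is not ``trivial'': it amounts to showing that coefficient representatives lie in $\mc O_v$, i.e.\ that $v$ is nonnegative on a coefficient ring, and nothing in your sketch rules out that $v$ restricts on the fraction field $K$ of the coefficient DVR to a valuation taking negative values there. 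This is exactly where completeness must enter. The paper's proof: by \Cref{Cohen structure} there are subrings $\mc O\subseteq R_0\subseteq R$ with $\mc O$ a complete discrete valuation ring and $R_0\simeq\rfps{\mc O}{t_1,\dots,t_{d-1}}$; since $\mc O$ is complete, it is the \emph{unique} discrete valuation ring of $K$ (see \cite[Proposition~2.2]{BGVG14}), so $\mc O_v\cap K$ is $K$ or $\mc O$ and in either case $\mc O\subseteq\mc O_v$; then $R_0=\mc O+\mfm_{R_0}\subseteq (R_0\cap\mc O_v)+\mfm_{R_0}$, \Cref{centred} gives $R_0\subseteq\mc O_v$, and integrality of $R$ over $R_0$ gives $R\subseteq\mc O_v$. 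You gesture at ``$v|_{\mc O_0}$ is trivial or a multiple of the $p$-adic valuation'' only in mixed characteristic and without justification; that uniqueness statement, for the coefficient DVR in both characteristic cases, is the heart of the argument and is missing from your proof.

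For step (ii), your primary argument via an Abhyankar-type inequality over the prime field does not apply: in the relevant examples (e.g.\ $R=\rfpst{\rr}$) both $E$ and $Fv$ have infinite transcendence degree over the prime field, so no numerical contradiction can be extracted that way. The correct argument is the one you relegate to your ``main obstacle'' paragraph, and it requires no case distinction on the characteristic: once $R\subseteq\mc O_v$ is known, $\mfm_v\cap R=\{0\}$ would force $v$ to vanish on $R\setminus\{0\}$, hence on $\mg{E}$; since $F/E$ is finite, $vF$ would then be torsion, contradicting $vF=\zz$. This is precisely how the paper concludes. So the overall plan is salvageable, but as written part (i) rests on a false henselianity claim and omits the completeness/uniqueness step, and part (ii)'s main argument is invalid.
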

\begin{proof}
Set $d=\dim R$. 
Since $d>0$, it follows by \Cref{Cohen structure} that there exist subrings 
$\mc{O}\subseteq R_0\subseteq R$ such that $R$ is a finite $R_0$-algebra, $\mc{O}$ is a complete discrete valuation ring and $R_0\simeq \mc{O}[\![t_1,\dots,t_{d-1}]\!]$.
Let $K\subseteq F$ be the fraction field of $\mc{O}$.
Since $\mc O$ is complete, we have that $\mc O$ is the unique discrete valuation ring of $K$; see e.g.~ \cite[Proposition 2.2]{BGVG14}. 
Let $v\in\Omega_F$. 
We have that $\mc O_{v}\cap K=K$ or $\mc O_{v}\cap K=\mc O$, so in
any case $\mc{O}\subseteq \mc{O}_v$.
Therefore $$R_0=\mfm_{R_0}+\mc{O}\subseteq \mfm_{R_0}+(R_0\cap \mc O_v).$$ 
Since $R_0$ is complete and in particular henselian, we obtain by \Cref{centred} that $R_0\subseteq \mc O_v$. 
Since $R$ is a finite $R_0$-algebra, and thus an integral extension of $R_0$, we conclude that $R\subseteq \mc{O}_v$.
It follows that $\mfm_v \cap R  \in \Spec R$. 
Since $F/E$ is a finite field extension,
the restriction of $v$ to $E$ is nontrivial.
As $E$ is the fraction field of $R$, we conclude that  $\mfm_v \cap R \neq \{0\}$.
\end{proof}

In the following we will use some results from algebraic geometry, including resolution of singularities for surfaces \cite[p.~193]{Lip75}, for which we need the following observation.

\begin{rem}\label{excellent}
By \cite[Proposition 15.52.3]{Sta} a complete notherian local ring is excellent, and hence in particular universally catenary and a Nagata ring.
Here, this applies for $R$ as well as for the integral closure of $R$ in $F$, which is again a complete noetherian local ring by \cite[Theorem 4.3.4]{HS06}.
\end{rem}

\begin{prop}\label{residues}
Assume that $\dim(R)=2$. Let $v \in \Omega_{F}$ and let $\mfp=\mfm_v\cap R$.
Then one of the following holds:
\begin{enumerate}[$(i)$]
\item $\mf p$ is a principal ideal of height $1$ of $R$ and there exists a complete discrete valuation ring of $Fv$ whose residue field is a finite field extension of $\kappa_R$.
\item $\mf p =\mfm_R$ and $Fv$ is either an algebraic extension of $\kappa_R$ or a function field in one variable over $\kappa_R$.
\end{enumerate}
\end{prop}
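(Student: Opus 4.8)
The plan is to split into two cases according to \Cref{P:center-nonzeroprime}, which guarantees that $\mfp=\mfm_v\cap R$ is a nonzero prime ideal of $R$. Since $R$ is a regular local ring, it is a unique factorisation domain, so every height one prime of $R$ is principal; and since $\dim R=2$, a nonzero prime of $R$ either has height $1$ (hence is principal) or equals $\mfm_R$. This yields the dichotomy $(i)$/$(ii)$ in the statement, and it remains to verify in each case the assertion about the residue field $Fv$. Set $E=\mathrm{Frac}(R)$. Since $v$ is nontrivial and $F/E$ is algebraic, the restriction $v|_E$ is nontrivial; by the fundamental inequality \cite[Theorem~3.3.4]{EP05}, both $vF/v|_E E$ and $Fv/Ev$ are finite, so in particular $v|_E$ has rational rank $1$.

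In case $(i)$, the localisation $R_\mfp$ is a one-dimensional regular local ring, hence a discrete valuation ring, with fraction field $E$ and residue field $\mathrm{Frac}(R/\mfp)$. Since $v$ is centred on $R$ in $\mfp$, the valuation ring $\mc{O}_{v|_E}$ contains and dominates $R_\mfp$; as the only rings between $R_\mfp$ and $E$ are $R_\mfp$ and $E$ themselves and $v|_E$ is nontrivial, we get $\mc{O}_{v|_E}=R_\mfp$ and therefore $Ev=\mathrm{Frac}(R/\mfp)$. Now $R/\mfp$ is a one-dimensional complete noetherian local domain with residue field $\kappa_R$; let $S$ be its integral closure in the finite field extension $Fv$ of $\mathrm{Frac}(R/\mfp)$. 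By \Cref{excellent} and \cite[Theorem~4.3.4]{HS06}, $S$ is module-finite over $R/\mfp$ and is a complete noetherian local ring; being in addition a one-dimensional integrally closed domain, $S$ is a complete discrete valuation ring, and $S/\mfm_S$ is a finite extension of $\kappa_R$. Since $\mathrm{Frac}(S)=Fv$, this $S$ is the discrete valuation ring of $Fv$ required in $(i)$.

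In case $(ii)$ we have $\mfp=\mfm_R$, so $\kappa_R$ embeds into $Fv$. Here I would invoke Abhyankar's inequality for a valuation centred in a noetherian local domain, applied to $R$ and $v|_E$:
\[
\mathrm{rat.rk}(v|_E)+\trdeg{\kappa_R}{Ev}\;\leq\;\dim R\;=\;2\,.
\]
Since $\mathrm{rat.rk}(v|_E)=1$, we obtain $\trdeg{\kappa_R}{Fv}=\trdeg{\kappa_R}{Ev}\leq 1$. If this transcendence degree is $0$, then $Fv$ is algebraic over $\kappa_R$. If it equals $1$, then equality holds in Abhyankar's inequality, so $Ev$ is a finitely generated extension of $\kappa_R$, that is, a function field in one variable over $\kappa_R$; as $Fv/Ev$ is finite, $Fv$ is then also a function field in one variable over $\kappa_R$. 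Alternatively, one can argue geometrically: pass to the integral closure $R'$ of $R$ in $F$---again a complete noetherian local domain by \Cref{excellent} and \cite[Theorem~4.3.4]{HS06}---and resolve singularities by \cite[p.~193]{Lip75}; a divisorial centre of $v$ on a suitable regular model realises $Fv$ as the function field of a curve over a finite extension of $\kappa_R$, while if $v$ admits no such centre, then $Fv$ is an increasing union of finite extensions of $\kappa_R$, hence algebraic over $\kappa_R$.

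I expect the main obstacle to be precisely the transcendence degree one subcase of $(ii)$: the easy half of Abhyankar's inequality only bounds $\trdeg{\kappa_R}{Fv}$ by $1$, whereas one needs the boundary case to force $Fv$ to be \emph{finitely generated} over $\kappa_R$---equivalently, that a $\zz$-valuation realising this boundary case is divisorial. This is exactly what the equality part of Abhyankar's theorem, or the resolution of singularities recalled in \Cref{excellent}, provides. Everything else---the case reduction, the identification $\mc{O}_{v|_E}=R_\mfp$ in case $(i)$, and the discrete valuation ring structure of $S$---is routine valuation theory combined with the excellence of $R$.
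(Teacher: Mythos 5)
Your proof is correct, and its skeleton coincides with the paper's: reduce via \Cref{P:center-nonzeroprime}, use regularity (UFD) to see that a height-one centre is principal, identify $\mathcal{O}_{v|_E}=R_{\mfp}$ and hence $Ev\simeq \mathrm{Frac}(R/\mfp)$, and in the case $\mfp=\mfm_R$ bound $\trdeg{\kappa_R}{Fv}$ by $1$ and then force finite generation in the boundary case. The key ingredients differ in both halves, though. In case $(i)$ the paper produces the complete discrete valuation ring of $Fv$ via \Cref{Cohen structure} (a complete DVR $\mc{O}\subseteq R/\mfp$ with $R/\mfp$ finite over it) followed by \cite[Theorem 14:1]{OM73} to extend to the finite extension $Fv$; you instead take the integral closure of the complete one-dimensional local domain $R/\mfp$ inside $Fv$ and apply \cite[Theorem 4.3.4]{HS06}, obtaining a complete DVR with the right residue field directly --- this is a bit more economical and bypasses \Cref{Cohen structure} in this step. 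In case $(ii)$ the paper quotes \cite[Theorem 8.3.26]{Liu06}: part $(a)$ for the transcendence degree bound and part $(b)$ to realise $v$ as a divisorial valuation on a model of $\Spec{S}$ (with $S$ the integral closure of $R$ in $F$, excellent by \Cref{excellent}), from which finite generation of $Fv/\kappa_R$ follows; you invoke instead Abhyankar's inequality for valuations dominating a noetherian local domain together with its equality case, which is an equivalent tool here but is not among the paper's references, so you would need to add a citation (e.g.\ Abhyankar's 1956 theorem, or simply \cite[Theorem 8.3.26]{Liu06} as the paper does). Your alternative geometric sketch at the end of case $(ii)$ essentially reproduces the paper's route, though the claim that a non-divisorial $\zz$-valuation has residue field an increasing union of finite extensions of $\kappa_R$ is left unjustified; since your primary (Abhyankar) argument is complete, this is not a gap.
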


\begin{proof}
In view of \Cref{P:center-nonzeroprime}, we have $\{0\}\subsetneq \mfp\subseteq \mfm_R$.
Since $R$ is a regular local ring, it is a unique factorization domain, by \cite[Theorem 4.2.16]{Liu06}.
Hence any height-$1$ prime ideal of $R$ is principal.

Assume first that $\mfp\neq \mfm_R$.
Since $R$ is local and $2$-dimensional, we obtain that $\mfp$ is a principal ideal of height $1$, $\mc O_v\cap E=R_{\mf p}$ and $R/\mf p$ is a $1$-dimensional complete local domain with residue field $\kappa_R$.
Since $\mc O_v\cap E=R_{\mf p}$, we have $E{v} \simeq \mathsf{Frac}(R/\mf p)$. 

By \Cref{Cohen structure}, there exists a complete discrete valuation ring $\mc O$ with residue field $\kappa_R$ which is a subring of $R/\mfp$ and such that $R/\mf p$ is a finite $\mc O$-algebra. 
Let $K$ be the fraction field of $\mc O$.
Then $E{v}/K$ is a finite extension.
Hence $Fv/K$ is a finite extension. 
It follows by \cite[Theorem 14:1]{OM73} that $Fv$ has a complete discrete valuation ring $\mc{O}'$ such that $\mc{O}'\cap K=\mc{O}$, and its residue field is a finite extension of the residue field of $\mc{O}$, which is $\kappa_R$.

Assume now that $\mf p=\mfm _R$. 
As $\dim R=2$, 
it follows by \cite[Theorem 8.3.26~$(a)$]{Liu06} that $\trdeg{\kappa_R}{Ev} \leq 1$.
Since the extension $F/E$ is finite, we obtain that $\trdeg{\kappa_R}{Fv} \leq 1$.
If $\trdeg{\kappa_R}{Fv} =0$, then $Fv/\kappa_R$ is algebraic.
Assume that $\trdeg{\kappa_R}{Fv} =1$. 
Let $S$ be the integral closure of $R$ in $F$.
In view of the properties of $S$ pointed out in  \Cref{excellent},
we obtain by \cite[Theorem 8.3.26~$(b)$]{Liu06} that there exists an integral scheme $\mc X$, a proper birational morphism $\mc X \rightarrow \Spec{S}$ and $x \in \mc X$ of codimension $1$ such that $\mc O_v=\mc O_{\mc X, x}$. 
Since any proper morphism is of finite type, there exists an open affine neighbourhood $C$ of $x$ in $\mc X$ such that $\mc O_{\mc X}(C)$ is a finitely generated $S$-algebra, and hence also a finitely generated $R$-algebra. Since $\mc O_v$ is a localisation of $\mc O_{\mc{X}}(C)$, it follows that the residue field extension
 $Fv/\kappa_R$ is finitely generated.
Hence $Fv/\kappa_R$ is a function field in one variable.
\end{proof}

Let $\mc X$ be a scheme. 
For $i \in \nat$, we denote by $\mc X^{(i)}$ the set of points of $\mc X$ of codimension $i$.
A scheme $\mc X'$ together with a birational proper morphism $\X'\rightarrow \mc X$ is called a \emph{model of $\mc X$}.

\begin{cor}\label{divisorial}
Assume that $\dim R=2$.
Let $S$ be the integral closure of $R$ in $F$ and let $v \in \Omega_{F}$. 
There exists a model $\mc X$ of $\Spec S$ on which $v$ is centred in a point of $\mc X ^{(1)}$ if and only if $Fv$ is either a function field in one variable over $\kappa_R$ or a complete discretely valued field whose residue field is a finite extension of $\kappa_R$.
\end{cor}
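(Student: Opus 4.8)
The statement is an "if and only if" characterising, among $\zz$-valuations $v$ on $F$, those that become divisorial on some model of $\Spec S$ in terms of their residue field $Fv$. The proof should extract both implications directly from \Cref{residues} together with the geometric input already set up in \Cref{excellent}.

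First I would prove the implication "there exists a model on which $v$ is centred in codimension $1$ $\Rightarrow$ $Fv$ has the stated shape''. Suppose $\mc X\to\Spec S$ is a model and $v$ is centred at some $x\in\mc X^{(1)}$, so that $\mc O_v=\mc O_{\mc X,x}$. Since the morphism is proper, hence of finite type, $\mc O_{\mc X,x}$ is a finitely generated $S$-algebra, and as $S$ is a finite $R$-algebra (it is the integral closure of $R$ in the finite extension $F/E$, and module-finite by \Cref{excellent}), it is a finitely generated $R$-algebra; thus $Fv/\kappa_R$ is a finitely generated field extension. Let $\mfp=\mfm_v\cap R$; by \Cref{P:center-nonzeroprime} it is a nonzero prime of $R$, and by \Cref{residues} either $\mfp$ has height $1$ — in which case $Fv$ carries a complete discrete valuation ring with residue field a finite extension of $\kappa_R$, and moreover $Fv/\kappa_R$ finitely generated forces $Fv$ itself to be (the fraction field of) a one-dimensional complete local ring, i.e.\ a complete discretely valued field with finite residue extension of $\kappa_R$ — or $\mfp=\mfm_R$, in which case $Fv$ is algebraic over $\kappa_R$ or a function field in one variable over $\kappa_R$; the algebraic case is excluded because $Fv/\kappa_R$ is transcendental whenever $v$ is centred in codimension $1$ (the residue field of a codimension-$1$ point on a model of a $2$-dimensional ring has transcendence degree $1$ over $\kappa_R$, by the dimension inequality in \cite[Theorem 8.3.26]{Liu06}, already invoked in \Cref{residues}). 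Either way $Fv$ has the asserted form.

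For the converse I would argue by the two cases of \Cref{residues}. If $\mfp=\mfm_v\cap R=\mfm_R$ and $Fv/\kappa_R$ is a function field in one variable, then case $(ii)$ of the proof of \Cref{residues} already produces, via \cite[Theorem 8.3.26~$(b)$]{Liu06}, an integral scheme $\mc X$ with a proper birational morphism $\mc X\to\Spec S$ and a point $x\in\mc X^{(1)}$ with $\mc O_v=\mc O_{\mc X,x}$; this is exactly a model of $\Spec S$ on which $v$ is centred in codimension $1$. If instead $Fv$ is a complete discretely valued field with residue field a finite extension of $\kappa_R$, then by \Cref{residues} we are in case $(i)$, so $\mfp$ is a height-$1$ principal prime of $R$ and $\mc O_v\cap E=R_\mfp$; here $v$ is already centred on $\Spec S$ at the generic point of the divisor cut out by $\mfp$ (pulled back to $S$), which is a point of $(\Spec S)^{(1)}$, so the model $\mc X=\Spec S$ itself works. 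I would phrase the converse uniformly: in both cases one exhibits a model and a codimension-$1$ point, either $\Spec S$ directly or the resolution furnished inside \Cref{residues}.

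\textbf{Main obstacle.} The delicate point is matching the transcendence-degree bookkeeping: one must be sure that when $Fv/\kappa_R$ is finitely generated of transcendence degree $1$, the height-$1$ case of \Cref{residues} cannot occur with a residue field that is \emph{not} finitely generated over $\kappa_R$, and conversely that the $\mfp=\mfm_R$ case with $Fv/\kappa_R$ algebraic is correctly excluded in the forward direction — i.e.\ that centredness in codimension $1$ genuinely forces $\trdeg(Fv/\kappa_R)=1$. This is where \cite[Theorem 8.3.26]{Liu06} and the excellence of $S$ (\Cref{excellent}) must be invoked carefully; everything else is a repackaging of \Cref{P:center-nonzeroprime} and \Cref{residues}. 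I would also take a moment to confirm that "model'' in the sense defined just above (birational \emph{proper} morphism) is compatible with the scheme produced by \cite[Theorem 8.3.26~$(b)$]{Liu06}, which it is, since that theorem yields precisely a proper birational modification.
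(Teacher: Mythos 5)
You follow the same route as the paper: everything is reduced to \Cref{P:center-nonzeroprime}, \Cref{residues} and the divisorial criterion \cite[Theorem 8.3.26~$(b)$]{Liu06}, with $\Spec{S}$ itself serving as the model when the centre has height one. However, some steps need repair. First, from ``$v$ is centred at $x\in\mc X^{(1)}$'' you deduce $\mc O_v=\mc O_{\mc X,x}$; a model is only an integral scheme with a proper birational morphism to $\Spec{S}$, so $\mc O_{\mc X,x}$ need not be normal, and centredness gives only the domination $\mc O_{\mc X,x}\subseteq\mc O_v$. (This claim only feeds your finite-generation digression, which is in any case superfluous: in the height-one case \Cref{residues}~$(i)$ already yields verbatim the complete discretely valued alternative.) Second, ``centred in codimension one implies $Fv/\kappa_R$ transcendental'' is not a consequence of the dimension inequality, which only gives $\trdeg{\kappa_R}{Fv}\leq 1$; moreover your blanket claim that a codimension-one point on a model has residue field of transcendence degree one over $\kappa_R$ fails for points lying over height-one primes of $S$. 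What you need (and what the paper quotes) is precisely the ``only if'' half of the equivalence in \cite[Theorem 8.3.26~$(b)$]{Liu06}, applied when the centre on $S$ is the closed point.

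The more serious issue is in your converse: from ``$Fv$ is a complete discretely valued field with residue field finite over $\kappa_R$'' you infer ``by \Cref{residues} we are in case~$(i)$''. But \Cref{residues} is a dichotomy indexed by the centre $\mfm_v\cap R$, not by the shape of $Fv$; as stated it does not exclude that the centre is $\mfm_R$ while $Fv$ happens to carry such a complete discrete valuation, so this inference requires an argument you do not supply. The paper's proof is organised so as not to need it: it splits according to the height of $\mfp=\mfm_v\cap S$, which equals that of $\mfm_v\cap R$ by integrality. If $\height{\mfp}=1$, then $\Spec{S}$ is already a model on which $v$ is centred in a point of codimension one and \Cref{residues}~$(i)$ gives the right-hand side, so both sides of the equivalence hold simultaneously; if $\mfp$ is the closed point, then \cite[Theorem 8.3.26~$(b)$]{Liu06} states that a model with codimension-one centre exists if and only if $Fv/\kappa_R$ is transcendental, and in that case $Fv/\kappa_R$ is a function field in one variable (the paper deduces this by passing through $Ev$ and \Cref{residues}). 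Recasting your converse along this case distinction on the centre, rather than on the shape of $Fv$, removes the problematic inference and brings your argument in line with the paper's.
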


\begin{proof}
Set $\mc X_0=\Spec S$ and $\mf p=\mfm_v \cap S$.
Since $S$ is integral over $R$, the height of $\mfp$ in $S$ is equal to the height of $\mfp\cap R=\mfm_v \cap R$ in $R$, and hence it follows by \Cref{residues} that it is either $1$ or $2$.
Hence $\mf p \in \mc X_0^{(1)}$ or $\mf p \in \mc X_0^{(2)}$.

Assume first that $\mf p \in \mc X_0^{(1)}$. 
Then $\mc X_0$ itself is a model of $\Spec S$ such that $v$ is centred in a point of $\mc X_0^{(1)}$, and
by \Cref{residues}, $Fv$ is a complete discretely valued field whose residue field is a finite extension of $\kappa_R$.

Assume now that $\mf p \in \mc X_0^{(2)}$.
Recall that $E$ is the fraction field of $R$.
We consider the chain of field extensions $\kappa_R\subseteq \kappa(\mf p)\subseteq Ev \subseteq Fv$. 
Since $S$ is an integral extension of $R$, $\kappa(\mf p)/\kappa_R$ is an algebraic extension.
By \cite[Theorem 8.3.26~$(b)$]{Liu06}, there exists a model $\mc X$ of $\mc X_0$ such that $v$ is centred in a point of $\mc X ^{(1)}$ if and only if the extension $Fv/\kappa(\mf p)$ is transcendental, that is, if and only if $Fv/\kappa_R$ is transcendental.
Assume that we are in this case.
Since the extension $F/E$ is finite, so is $Fv/Ev$.
Hence $Ev/\kappa_R$ is transcendental.
It follows by \Cref{residues} that $Ev/\kappa_R$ is a function field in one variable.
As the extension $Fv/Ev$ is finite, we obtain that $Fv/\kappa_R$ is a function field in one variable.
\end{proof}

Let $\mc X$ be a scheme. 
Given $x \in \mc X$, we denote by $V(x)$ the Zariski closure of $\{x\}$ in $\mc X$, considered with its reduced scheme structure induced by $\mc X$.

\begin{prop}\label{ruled}
Assume that $\dim R=2$.
Let $S$ be the integral closure of $R$ in $F$ and let $\X$ 
be a regular model of $\Spec S$. 
Let $v \in \Omega_{F}$ and let $x$ be the centre of $v$ on $\X$.
Assume that $x \in \Xcod{2}$.
If $Fv/\kappa(x)$ is transcendental, then $Fv/\kappa_R$ is a ruled function field in one variable.
\end{prop}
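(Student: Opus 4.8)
The plan is to describe $v$ by a finite chain of quadratic transforms of the two‑dimensional regular local ring $A=\mc{O}_{\X,x}$ and then to recognise $Fv$ as the function field of a projective line over a finite extension of $\kappa_R$. To set up the data, note that since $\X$ is a regular model of $\Spec S$ and $S$ is finite over $R$ (by \Cref{excellent}), $\X$ is integral and proper over $\Spec R$ of dimension $2$; hence, as $x\in\Xcod{2}$, the point $x$ is closed and lies above $\mfm_R$, the ring $A=\mc{O}_{\X,x}$ is a regular local ring of dimension $2$ with fraction field $F$, and its residue field $\kappa(x)$ is a finite extension of $\kappa_R$ (being the residue field of a closed point of a fibre of finite type over $\kappa_R$). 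Being centred on $\X$ in $x$, the valuation $v$ dominates $A$, i.e.\ $A\subseteq\mc{O}_v$ and $\mfm_v\cap A=\mfm_x$; in particular $\kappa(x)$ embeds into $Fv$, and by \Cref{residues} (together with $[F:E]<\infty$ and $[\kappa(x):\kappa_R]<\infty$) we have $\trdeg{\kappa(x)}{Fv}\leq 1$, hence $\trdeg{\kappa(x)}{Fv}=1$ by the hypothesis that $Fv/\kappa(x)$ is transcendental.

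Next I would iterate quadratic transforms. Put $A_0=A$ and $\kappa_0=\kappa(x)$. Suppose $A_i$ is a two‑dimensional regular local ring dominated by $v$ with residue field $\kappa_i$ finite over $\kappa(x)$. Let $Y_i\to\Spec{A_i}$ be the blow-up at the closed point; then $Y_i$ is integral with function field $F$, regular of dimension $2$, and its exceptional fibre is the projective line $\pp^1_{\kappa_i}=\mathrm{Proj}(\kappa_i[T_0,T_1])$, the $\mathrm{Proj}$ of the associated graded ring of $A_i$. By the valuative criterion of properness, $v$ has a unique centre $y_i$ on $Y_i$; since $v$ dominates $A_i$, this centre lies on $\pp^1_{\kappa_i}$. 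If $y_i$ is the generic point of $\pp^1_{\kappa_i}$, then $\mc{O}_v$ contains the discrete valuation ring $\mc{O}_{Y_i,y_i}$, whence $\mc{O}_v=\mc{O}_{Y_i,y_i}$ and $Fv=\kappa_i(\theta)$ for some $\theta$ transcendental over $\kappa_i$; in that case the construction stops. Otherwise $y_i$ is a closed point of $\pp^1_{\kappa_i}$, and $A_{i+1}:=\mc{O}_{Y_i,y_i}$ is again a two‑dimensional regular local ring dominated by $v$, with residue field $\kappa_{i+1}=\kappa(y_i)$ finite over $\kappa_i$; I would then repeat the construction with $A_{i+1}$.

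To conclude I would invoke the termination of this procedure: a valuation of rank one dominating a two‑dimensional regular local ring and having residue field of transcendence degree one over the base is divisorial, i.e.\ is obtained after finitely many quadratic transforms (this is Abhyankar's theorem on valuations of two‑dimensional regular local rings; cf.\ also \cite[\S 8.3]{Liu06}). Hence the first alternative above occurs at some stage $n\in\nat$, so $Fv=\kappa_n(\theta)$ with $\theta$ transcendental over $\kappa_n$. Since $n$ is finite, $\kappa_n/\kappa(x)$ is a finite tower of finite extensions, hence finite, and $\kappa(x)/\kappa_R$ is finite; therefore $\kappa_n/\kappa_R$ is finite and $Fv=\kappa_n(\theta)$ is a finitely generated extension of $\kappa_R$ of transcendence degree one which is rational over the finite extension $\kappa_n$ of $\kappa_R$ --- that is, a ruled function field in one variable over $\kappa_R$.

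I expect the termination step to be the only real obstacle: it is a genuine theorem of Abhyankar (equivalently, the statement that such a $v$ is divisorial), which I would cite rather than reprove. Everything else is routine --- that blowing up a two‑dimensional regular local ring at its closed point produces a regular surface whose exceptional fibre is a projective line over the residue field; that each $A_{i+1}$ is again two‑dimensional, being the local ring at a closed point of a scheme proper over the two‑dimensional local scheme $\Spec{A_i}$; and the bookkeeping for the tower of finite residue‑field extensions $\kappa(x)=\kappa_0\subseteq\kappa_1\subseteq\cdots\subseteq\kappa_n$.
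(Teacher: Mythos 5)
Your argument is correct and is essentially the paper's proof: both run the sequence of blow-ups at the successive centres of $v$, invoke the same non-trivial termination fact (that such a valuation is divisorial, i.e.\ reached after finitely many quadratic transforms --- the paper cites \cite[Exercise 8.3.14]{Liu06}, you cite Abhyankar), and identify $Fv$ with the function field of the exceptional $\pp^1$ over a finite extension of $\kappa_R$. The only difference is cosmetic: you perform local quadratic transforms of the local rings $\mc{O}_{\X_i,x_i}$, while the paper blows up the global regular models of $\Spec S$ and gets the finiteness of the residue extensions from \cite[Theorem 8.2.5]{Liu06} instead of from your tower argument.
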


\begin{proof}
In view of the properties of $S$ mentioned in \Cref{excellent} and since $Fv/\kappa(x)$ is transcendental, it follows by \cite[Theorem 8.3.26~$(b)$]{Liu06} that $$\trdeg{\kappa(x)}{Fv}= 1=\dim{\mc O_{\X,x}}-1.$$ 
We set $\X_0=\X$ and $x_0=x$. 
For $i\in \nat$ we define recursively $\pi_{i+1}: \mc X_{i+1}\rightarrow \mc X_i$ as the blowing up of $\mc X_{i}$ along the regular subscheme $V(x_{i})$ and denote by $x_{i+1}$ the centre of $v$ on $\mc X_{i+1}$.
By \cite[Theorem 8.1.19~$(a)$ and Proposition 8.1.12~$(b)$]{Liu06}, it follows for every $i \in \nat$ that $\mc X_{i+1}$ is a regular model of $\mc X_i$, and hence of $\Spec S$. 
Furthermore, it follows by \cite[Exercise 8.3.14]{Liu06} that there exists $n\in \nat^+$ such that $ x_n \in \mc X_n^{(1)}$, $\mc O_v=\mc O_{\mc X_n, x_n}$ and $x_i \in \mc X_i^{(2)}$ 
for $0\leq i < n$.
In particular, $Fv=\kappa(x_n)$ and, by \cite[Theorem 8.2.5]{Liu06},  $\kappa(x_i)/\kappa_R$ is a finite extension for every $0\leq i < n$. 
Since $x_n\in\smash{\mc X_n^{(1)}}$ and the exceptional fibre $\pi_{i+1}^{-1}{x_i}$ is an irreducible subscheme of $\mc X_{i+1}$ of dimension $1$, we obtain by \cite[Theorem 8.1.19~$(b)$]{Liu06} that $V(x_n) \simeq \mathbb{P}_{\kappa(x_{n-1})}^{1}$, and we conclude that $x_n$ is the generic point of $\pi_{i+1}^{-1}{x_i}$.
Therefore $\kappa(x_n)/\kappa(x_{n-1})$ is a rational function field in one variable. 
Since $Fv=\kappa(x_n)$ and $\kappa(x_{n-1})/\kappa_R$ is a finite extension, we conclude that $Fv/\kappa_R$ is a ruled function field in one variable.
\end{proof}

We obtain an analogue to \cite[Theorem 5.3]{BGVG14}.

\begin{prop}\label{fin val}
Assume that $\dim R=2$.
Then there exist only finitely many $\zz$-valuations on $F$ whose residue fields are nonruled function fields in one variable over $\kappa_R$.  
\end{prop}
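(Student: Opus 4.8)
The plan is to fix once and for all a \emph{regular} model $\pi\colon\X\to\Spec{S}$ of $\Spec{S}$, where $S$ denotes the integral closure of $R$ in $F$. Such a model exists by resolution of singularities for two\-dimensional excellent schemes \cite[p.~193]{Lip75}, since by \Cref{excellent} the ring $S$ is an excellent noetherian local domain, of dimension $\dim R=2$, with fraction field $F$. Let $\mc V$ be the set of those $v\in\Omega_F$ for which $Fv$ is a nonruled function field in one variable over $\kappa_R$. To each $v\in\mc V$ I would attach its centre $x$ on $\X$; the three claims to establish are that $x$ is a point of $\Xcod{1}$, that $x$ lies in the special fibre $\pi^{-1}(\mfm_S)$, and that only finitely many points of $\X$ occur this way. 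Since a $\zz$-valuation is determined by its valuation ring, this will yield $|\mc V|<\infty$.

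First I would note that every $v\in\Omega_F$ has a centre on $\X$: by \Cref{P:center-nonzeroprime} we have $R\subseteq\mc O_v$, hence $S\subseteq\mc O_v$ as $\mc O_v$ is an integrally closed subring of $F=\mathsf{Frac}(\mc O_v)$ containing $R$; the valuative criterion of properness, applied to the proper morphism $\pi$, then gives a unique $x\in\X$ with $\mc O_{\X,x}\subseteq\mc O_v$ and $\mfm_x=\mfm_v\cap\mc O_{\X,x}$, in particular an inclusion $\kappa(x)\subseteq Fv$ and the equality $\pi(x)=\mfm_v\cap S$. Now fix $v\in\mc V$ with centre $x$. Since $v$ is nontrivial, $x$ is not the generic point of $\X$. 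Moreover $x$ cannot lie in $\Xcod{2}$: then $x$ is a closed point, so $\kappa(x)/\kappa_R$ is finite by \cite[Theorem 8.2.5]{Liu06}, whence $Fv/\kappa(x)$ is transcendental (because $Fv$ is not algebraic over $\kappa_R$), and then \Cref{ruled} would force $Fv/\kappa_R$ to be ruled, contradicting $v\in\mc V$. Hence $x\in\Xcod{1}$, so $\mc O_{\X,x}$ is a discrete valuation ring of $F$ (as $\X$ is regular), contained in the valuation ring $\mc O_v$ of $F$; since a valuation ring of $F$ properly containing a discrete valuation ring of $F$ must be $F$ itself, and $\mc O_v\neq F$, we get $\mc O_v=\mc O_{\X,x}$. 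In particular $v$ is recovered from $x$, so $v\mapsto x$ is an injection of $\mc V$ into $\Xcod{1}$.

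It remains to locate the centre $x$ of $v\in\mc V$ inside $\X$. Put $\mfp=\pi(x)=\mfm_v\cap S$, which is nonzero since it contains the nonzero prime $\mfm_v\cap R$ (see \Cref{P:center-nonzeroprime}). If $\mfp\neq\mfm_S$, then $\mfp$ has height one in $S$, hence $\mfm_v\cap R=\mfp\cap R$ has height one in $R$ by going-down (valid since $R$ is normal); by \Cref{residues}$(i)$ the field $Fv$ would then be a complete discretely valued field whose residue field is finite over $\kappa_R$, which is impossible, because a function field in one variable over $\kappa_R$ is never complete with respect to a discrete valuation --- its completion at any place strictly contains it. Therefore $\mfp=\mfm_S$, so $x$ lies in the closed subscheme $\pi^{-1}(\mfm_S)$ of $\X$. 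As $\pi$ is birational, $\pi^{-1}(\mfm_S)$ does not contain the generic point of the irreducible two\-dimensional scheme $\X$, so it is a proper closed subset, with finitely many irreducible components, each of dimension at most one; the points of $\Xcod{1}$ that lie in it are precisely the generic points of its one\-dimensional components, a finite set. Together with the injectivity of $v\mapsto x$ on $\mc V$ from the previous paragraph, this proves that $\mc V$ is finite.

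The step I expect to require the most care is the exclusion of centres lying over a height\-one prime of $S$: one must be certain that the complete discretely valued field furnished by \Cref{residues}$(i)$ --- the fraction field of a one\-dimensional complete local domain --- genuinely cannot coincide with a function field in one variable over $\kappa_R$. The remaining ingredients (the valuative criterion of properness, the finiteness of residue extensions at closed points of a model, and \Cref{P:center-nonzeroprime}, \Cref{residues} and \Cref{ruled}) then combine routinely. This is the complete\-local\-ring analogue of \cite[Theorem 5.3]{BGVG14}, and the argument runs parallel to the one given there.
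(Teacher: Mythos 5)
Your proposal is correct and takes essentially the same route as the paper: a regular model of $\Spec{S}$ obtained from \cite[p.~193]{Lip75}, the centre of $v$ forced by \Cref{residues} and \Cref{ruled} to be a codimension-one point of the special fibre at which $\mc{O}_v=\mc{O}_{\X,x}$, and finiteness because that fibre is closed of dimension at most one. The step you rightly flag as delicate---that a function field in one variable over $\kappa_R$ cannot be the complete discretely valued field arising in \Cref{residues}$(i)$---is used just as implicitly by the paper when it reads off from \Cref{residues} that $v$ is centred in $\mfm_R$; note only that your stated reason (completion at a place strictly contains the field) is not quite adequate, since the discrete valuation in question need not be trivial on $\kappa_R$, and a cleaner justification is that such a complete field has infinite transcendence degree over $\kappa_R$ (or: it is henselian with respect to a nontrivial valuation, hence not Hilbertian, whereas function fields in one variable are Hilbertian).
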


\begin{proof}
Let $S$ be the integral closure of $R$ in $F$.
Then $S$ is excellent; see \Cref{excellent}.
Hence, by \cite[p. 193]{Lip75}, there exists a regular model $\eta: \mc X\rightarrow \Spec S$ of $\Spec S$. 
We denote by $\iota: \Spec S \rightarrow \Spec R$ the morphism of schemes corresponding to the inclusion $R\to S$. 
Denote by $\mc X_s$ the fibre of $\eta \circ \iota$ over $\mfm_R$. 
Since $\eta$ is birational, its image is dense in $\Spec S$, hence $\mc X_s$ has dimension at most $1$.
Since $\mc X_s$ is a closed subscheme of $\mc X$ of dimension at most $1$, it has only finitely many irreducible components, and hence we conclude that $\mc X_s \cap \mc X^{(1)}$ is finite.

Consider now an arbitrary $v \in \Omega_{F}$ such that $Fv/\kappa_R$ is a nonruled function field in one variable. We claim that $\mc{O}_v=\mc{O}_{\X,x}$ for some $x\in\mc X_s \cap \mc X^{(1)}$.
Since $\mc X_s \cap \mc X^{(1)}$ is finite, this will establish the statement.

Let $x\in \mc X$ be the centre of $v$ on $\mc X$. 
Since $Fv/\kappa_R$ is a function field in one variable, it follows by \Cref{residues} that $v$ is centred in $\mfm_R$ on $R$, that is, $x \in \mc X_s$.
Since $Fv/\kappa_R$ is a nonruled function field in one variable, it follows by \Cref{ruled} that $x \in \mc X^{(1)}$.
Since $\X$ is regular, it follows that $\mc O_{\mc X, x}$ is a discrete valuation ring of $F$.
Since $\mc O_{\X,x}\subseteq \mc O_v$, we obtain that $\mc O_{\X,x}= \mc O_v$.
\end{proof}

\begin{prop}\label{FPS in 2 variables}
Let $m \in \nat^+$, let $K$ be a field such that $p(K(X))\leq 2^m$ and let $F/K(\!(t_1,t_2)\!)$ be a finite field extension.
Set $V=\{v\in \Omega_F \mid  p(F^v)>2^m \}$.
Then $V$ is finite and, for every $v\in V$, we have that $Fv/K$ is a nonruled function field in one variable.
\end{prop}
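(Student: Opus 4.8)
The plan is to transpose the proof of \Cref{T:example1} to the two-variable situation, using the classification of residue fields of $\zz$-valuations obtained in this section. Concretely, I would apply the results above with $R=K[\![t_1,t_2]\!]$, which is a complete regular local domain of dimension $2$ with residue field $\kappa_R=K$ and fraction field $E=K(\!(t_1,t_2)\!)$; since $R$ is equicharacteristic, all the fields $F^v$ and $Fv$ for $v\in\Omega_F$ have characteristic $\car K$. If $\car K=2$ then $p(F^v)=1$ for all $v$, so $V=\emptyset$ and there is nothing to prove; assume henceforth $\car K\neq 2$. By \Cref{fin val}, the set $W=\{w\in\Omega_F\mid Fw/K\text{ is a nonruled function field in one variable}\}$ is finite, so it is enough to prove that $V\subseteq W$: this yields at once the finiteness of $V$ and the statement about residue fields.

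So I would fix $v\in V$, that is $p(F^v)>2^m$, put $\mfp=\mfm_v\cap R$, and invoke \Cref{residues}. Note first that $p(K(X))\leq 2^m$ implies $p(L(X))\leq 2^m$ for every algebraic extension $L/K$, by \cite[Corollary 4.13]{BGVG14}. In case $(i)$ of \Cref{residues}, $\mfp$ has height $1$ and (from the proof of \Cref{residues}, or by \Cref{divisorial}) $Fv$ is a complete discretely valued field whose residue field $L$ is a finite extension of $K$; then $p(L(X))\leq 2^m$, and two applications of \cite[Theorem 4.14]{BGVG14}---first to $Fv$ over $L$, then to $F^v$ over $Fv$---give $p(F^v)\leq p(F^v(X))\leq 2^m$, contradicting $v\in V$. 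In case $(ii)$, $\mfp=\mfm_R$ and $Fv$ is either algebraic over $K$ or a function field in one variable over $K$. If $Fv/K$ is algebraic, then $p(Fv(X))\leq 2^m$, whence $p(F^v)\leq p(F^v(X))\leq 2^m$ by \cite[Theorem 4.14]{BGVG14}, again a contradiction. If $Fv=L(\theta)$ is ruled, with $L/K$ finite and $\theta$ transcendental, then $p(Fv)=p(L(X))\leq 2^m$ by \cite[Corollary 4.13]{BGVG14}, and I would conclude exactly as in the proof of \Cref{T:example1}: if $Fv$ is real then $p(F^v)=p(Fv)\leq 2^m$, and if $Fv$ is nonreal then $s(F^v)=s(Fv)<2^m$ by \cite[Theorem 3.5]{BVG09}, so that $p(F^v)=s(F^v)+1\leq 2^m$; either way $v\notin V$, a contradiction. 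The only surviving possibility is that $Fv/K$ is a nonruled function field in one variable, i.e.\ $v\in W$; hence $V\subseteq W$.

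All the substantial content is already in place: \Cref{residues} supplies the dichotomy for residue fields (and with it the structure theory of complete local rings and resolution of singularities for surfaces), while \Cref{fin val} supplies the finiteness. The remaining work is essentially bookkeeping, matching each case of \Cref{residues} with the corresponding estimate from the proof of \Cref{T:example1}; there is no real obstacle. The one place that needs a little attention is case $(i)$: one must observe that $Fv$ is genuinely a complete discretely valued field with residue field finite over $K$---the bare statement of \Cref{residues}~$(i)$ only records that $Fv$ possesses such a valuation ring---and then check that \cite[Theorem 4.14]{BGVG14} applies to the two nested complete discretely valued fields $L\subseteq Fv$ and $Fv\subseteq F^v$; this is harmless since all residue characteristics are different from $2$.
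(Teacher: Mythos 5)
Your argument is correct and follows essentially the same route as the paper's own proof: classify the residue fields via \Cref{residues}, rule out the algebraic, ruled and height-one (complete discretely valued) cases by the same Pythagoras/level estimates as in \Cref{T:example1}, and deduce finiteness of $V$ from \Cref{fin val}. The only cosmetic difference is in the height-one case, where you apply \cite[Theorem 4.14]{BGVG14} twice (to $Fv$ over its residue field and then to $F^v$ over $Fv$), while the paper transfers the invariant $p'(L)=\min\{p(L),s(L)+1\}$ along the two complete discrete valuations; both yield $p(F^v)\leq 2^m$.
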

\begin{proof}
For any field $L$ we set $p'(L)=\min\{p(L),s(L)+1\}\in\nat\cup\{\infty\}$.
Consider a finite field extension $L/K$.
By \cite[Corollary~4.6]{BGVG14}, the hypothesis on $K$ implies that $p(L(X))\leq 2^m$.
Moreover, if $L$ is nonreal, then it follows by \cite[Theorem 3.5]{BVG09} that $s(L(X))=s(L)<2^m$. In any case, we obtain that $p'(L)\leq p'(L(X))\leq 2^m$.

Consider $v\in\Omega_F$. Note that $p(F^v)=p'(Fv)$.
If the extension $Fv/K$ is either algebraic or a ruled function field in one variable, then  we obtain that $p(F^v)=p'(Fv)\leq 2^m$.
If $Fv$ carries a complete $\zz$-valuation $w$ whose residue field $(Fv)w$ is a finite extension of $K$, then $p(F^v)=p'(Fv)=p'((Fv)w)\leq 2^m$,

In view of \Cref{residues}, this shows that, for every $v\in V$, the extension $Fv/K$ is a nonruled function field in one variable. 
We conclude by \Cref{fin val} that the set $V$ is finite.
\end{proof}

\begin{thm}\label{Pythagoras FPS in 2 variable}
Let $n \in \nat$, let $K$ be a $\mc{P}_n$-field and let $F/K(\!(t_1,t_2)\!)$ be a finite field extension.
Then $$p(F)\leq 2^{n+1}+1\,.$$
\end{thm}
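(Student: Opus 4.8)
The plan is to reduce this to the machinery established in the previous sections, specifically \Cref{T:example0} and \Cref{Bound for k-effective}. We may assume $\car K\neq 2$, since otherwise $K(\!(t_1,t_2)\!)$ and hence $F$ has characteristic $2$ and the statement is trivial. Set $m=n+1$. Since $K$ is a $\mc P_n$-field, we have $p(K(X))\leq 2^{n+1}=2^m$; this is the hypothesis needed to invoke \Cref{FPS in 2 variables} and \Cref{fin val}. If $p(F)\leq 2^{m}$ there is nothing to prove (indeed then $p(F)\leq 2^m<2^m+1$), so assume $p(F)>2^m$. It then suffices to show that $F$ is $m$-effective, because \Cref{Bound for k-effective} immediately yields $p(F)\leq 2^m+1$.

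To prove $m$-effectiveness via \Cref{T:example0}, I would take $V=\{v\in\Omega_F\mid p(F^v)>2^m\}$. By \Cref{FPS in 2 variables}, $V$ is finite and for each $v\in V$ the residue field $Fv$ is a nonruled function field in one variable over $K$. Since $p(F)>2^m$ we also need $V\neq\emptyset$, and this will follow once we know that $V$ characterises sums of $2^m$ squares in $F$, combined with the fact that $p(F)>2^m$ forces some local obstruction. The two remaining ingredients are: first, that $V$ characterises sums of $2^m$ squares in $F$; second, that $Fv$ is $m$-effective for every $v\in V$. For the first, I would apply \Cref{C:characterise-criterion} with $k=2^m$: this requires that quadratic forms over $F$ in $\geq 3$ variables (of the shape $\sum_{i=1}^{2^m}X_i^2-aX_0^2$) satisfy the local-global principle with respect to $\Omega_F$, which holds by \cite[Theorem 3.1]{CTPS12} since $F$ is a finite extension of the fraction field of the complete regular local ring $K[\![t_1,t_2]\!]$, and that the set $\{v\in\Omega_F\mid p(F^v)>2^m\}=V$ is finite, which we already have; note $2^m+1\geq 3$ so the dimension hypothesis of \Cref{C:characterise-criterion} is met.

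For the second ingredient, I need that $Fv$ is $m$-effective for each $v\in V$. Here $Fv/K$ is a nonruled function field in one variable. Since $K$ is a $\mc P_n$-field, the definition of $\mc P_n$-field tells us precisely that every function field in one variable $E/K$ with $p(E)>2^{n+1}=2^m$ is $(n+1)$-effective, i.e.\ $m$-effective. So I must check that $p(Fv)>2^m$. This is where the hypothesis $p(F^v)>2^m$ (i.e.\ $v\in V$) comes in: since $Fv$ is the residue field of the complete discretely valued field $F^v$, and $p(F^v)=p'(Fv)=\min\{p(Fv),s(Fv)+1\}$ (as used in the proof of \Cref{FPS in 2 variables}), the inequality $p(F^v)>2^m$ gives both $p(Fv)>2^m$ and $s(Fv)\geq 2^m$; in particular $p(Fv)>2^{n+1}$, so $Fv$ is $m$-effective as required.

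Finally, with $V$ a nonempty finite set of valuations on $F$ characterising sums of $2^m$ squares in $F$ and with $Fv$ being $m$-effective for every $v\in V$, \Cref{T:example0} yields that $F$ is $m$-effective, hence $2^{n+1}$-effective. By \Cref{Bound for k-effective} we conclude $2^{n+1}\leq p(F)\leq 2^{n+1}+1$, and in particular $p(F)\leq 2^{n+1}+1$, which proves the theorem. The only step that is more than bookkeeping is verifying the hypotheses of \Cref{C:characterise-criterion}---namely invoking the field-patching local-global principle of \cite{CTPS12} over the two-variable power series setting and confirming the finiteness of $V$ via \Cref{fin val} and the geometry of regular models developed in this section; everything else is a direct appeal to the abstract framework. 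I expect the main subtlety to be ensuring the chain $p(F^v)>2^m \Rightarrow p(Fv)>2^m$ is correctly justified so that the $\mc P_n$-field hypothesis applies to $Fv$, rather than merely to $K$ itself.
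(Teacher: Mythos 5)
Your overall strategy coincides with the paper's (the same set $V$, \Cref{C:characterise-criterion}, \Cref{T:example0}, \Cref{FPS in 2 variables} and \Cref{Bound for k-effective}), but the key external ingredient is mis-cited, and this matters. \cite[Theorem 3.1]{CTPS12} is a statement about function fields of curves over a complete discretely valued field; a finite extension $F$ of $K(\!(t_1,t_2)\!)$ is not of this form, since $K(\!(t_1,t_2)\!)$ is not finitely generated over $K(\!(t_1)\!)$ (it is an infinite extension of $K(\!(t_1)\!)(t_2)$). For the two-dimensional complete local setting of this theorem the paper instead reduces to $\car{F}=0$ (in characteristic $p>2$ one has $s(F)\leq 2$, so $p(F)\leq 3$ and the statement is trivial, just as in characteristic $2$) and then invokes \cite[Corollary 4.7]{HHK15}, which applies to the fraction field of a two-dimensional complete local domain with residue characteristic $0$. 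Your reduction to $\car K\neq 2$ is therefore not sufficient to justify the local-global principle you need, and the reference you give does not cover the field $F$ at hand.

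The second gap is the step you yourself flagged: the implication $p(F^v)>2^{n+1}\Rightarrow p(Fv)>2^{n+1}$. The identity $p(F^v)=\mathsf{min}\{p(Fv),s(Fv)+1\}$ on which you base it is not valid when $Fv$ is nonreal with $p(Fv)=s(Fv)$: by Springer's theorem a uniformiser of $F^v$ is not a sum of $s(Fv)$ squares, so $p(F^v)=s(Fv)+1$, which then exceeds $\mathsf{min}\{p(Fv),s(Fv)+1\}$. Concretely, if $s(Fv)=p(Fv)=2^{n+1}$, then $p(F^v)=2^{n+1}+1$, so $v\in V$, yet $p(Fv)=2^{n+1}$ is not strictly larger than $2^{n+1}$ and the definition of $\mc{P}_n$-field gives you nothing about $Fv$. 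This boundary case cannot be excluded, and the paper treats it separately: from $2^{n+1}<p(F^v)\leq p(Fv)+1$ it deduces $s(Fv)=p(Fv)=2^{n+1}$ and then concludes that $Fv$ is $(n+1)$-effective by \Cref{EX:s=p-n-effective}. Your argument as written skips exactly this case; the fix is precisely this additional case distinction, after which the rest of your reduction goes through as in the paper.
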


\begin{proof}
We set $V=\{v\in \Omega_F \mid  p(F^v)>2^{n+1} \}$.
Then $V$ is finite, by  \Cref{FPS in 2 variables}. 
Let $S$ be the integral closure of $\rfps{K}{t_1,t_2}$ in $F$. 
Recall that $S$ is a $2$-dimensional noetherian complete local ring by \cite[Theorem 4.3.4]{HS06}, and that $F$ is the fraction field of $S$.
We may assume that $\car F=0$, since otherwise we trivially have that $p(F)\leq 3\leq 2^{n+1}+1$.
Hence $\car K=0$ and thus the residue field of $S$ has characteristic $0$ as well.
It follows by \cite[Corollary 4.7]{HHK15} that quadratic forms in at least $3$ variables satisfy the local-global principle with respect to $\Omega_F$.
Since $2^{n+1}+1\geq 3$, we conclude by \Cref{C:characterise-criterion} that $V$ characterises sums of $2^{n+1}$ squares~$F$.
In particular, if $V=\emptyset$, then $p(F)\leq 2^{n+1}$.

Assume now that $V\neq \emptyset$.
Consider $v \in V$.
By \Cref{FPS in 2 variables}, $Fv/K$ is a function field in one variable.
If $p(Fv)>2^{n+1}$, then $Fv$ is $(n+1)$-effective by the hypothesis that $K$ is a $\mc{P}_n$-field.
Assume now that $p(Fv)\leq 2^{n+1}$.
Then, since $2^{n+1}<p(F^v)\leq p(Fv)+1$, we obtain that $p(F^v)=2^{n+1}+1$ and that $s(Fv)=p(Fv)=2^{n+1}$. 
By  \Cref{EX:s=p-n-effective}, it follows that $Fv$ is $(n+1)$-effective.
Hence $Fv$ is $(n+1)$-effective for every $v\in V$.
We conclude by \Cref{T:example0} that
$F$ is $(n+1)$-effective.
In particular $p(F)\leq 2^{n+1}+1$, by \Cref{Bound for k-effective}.
\end{proof}

\begin{cor}\label{C:final bound}
Let $n\in\nat$ be such that $p(E)\leq 2^{n+1}$ holds for every function field in one variable $E/K$. 
Let $r\in\nat$ and let $F$ be a finite field extension of $K(\!(t_1)\!)\dots(\!(t_r)\!)(\!(t_{r+1},t_{r+2})\!)$.
Then $p(F)\leq 2^{n+1}+1$.
\end{cor}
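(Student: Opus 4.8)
The plan is to assemble the statement from results of the previous two sections, the decisive one being \Cref{Pythagoras FPS in 2 variable}, which handles finite extensions of a field of formal power series in two variables over a $\mc P_n$-field. So the task reduces to exhibiting an appropriate $\mc P_n$-field to play the role of the base.

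First I would observe that the hypothesis is exactly what makes $K$ a $\mc P_n$-field: since $p(E)\leq 2^{n+1}$ holds for every function field in one variable $E/K$, \Cref{FPS} gives that $K$ is a $\mc P_n$-field.

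Next I would promote this to the iterated Laurent series field $K':=K(\!(t_1)\!)\dots(\!(t_r)\!)$ by an induction on $r$. The base case $r=0$ is the previous step, where $K'=K$. For the inductive step, assuming $K(\!(t_1)\!)\dots(\!(t_{i-1})\!)$ is a $\mc P_n$-field, \Cref{T:example2} applied to this field shows that $K(\!(t_1)\!)\dots(\!(t_{i-1})\!)(\!(t_i)\!)=K(\!(t_1)\!)\dots(\!(t_i)\!)$ is again a $\mc P_n$-field. After $r$ applications, $K'$ is a $\mc P_n$-field.

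Finally, $F$ is a finite field extension of $K'(\!(t_{r+1},t_{r+2})\!)$ with $K'$ a $\mc P_n$-field, so \Cref{Pythagoras FPS in 2 variable} (with base field $K'$ and power series variables $t_{r+1},t_{r+2}$) yields $p(F)\leq 2^{n+1}+1$. I do not expect a genuine obstacle here: every step is a direct invocation of an already established result, and the only point requiring a (trivial) argument is that the passage from a $\mc P_n$-field to its Laurent series field can be iterated, which is the finite induction carried out in the third step; the substantive content — the finiteness of the relevant set of $\zz$-valuations, the local-global principle, and the $(n+1)$-effectivity of the residue fields — is already packaged inside \Cref{Pythagoras FPS in 2 variable} and \Cref{Bound for k-effective}.
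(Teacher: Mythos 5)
Your proposal is correct and follows exactly the paper's own route: Example \ref{FPS} to see that $K$ is a $\mc{P}_n$-field, an iterated application of \Cref{T:example2} to pass to $K(\!(t_1)\!)\dots(\!(t_r)\!)$, and then \Cref{Pythagoras FPS in 2 variable} applied to $F/K(\!(t_1)\!)\dots(\!(t_r)\!)(\!(t_{r+1},t_{r+2})\!)$. There is nothing to add.
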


\begin{proof}
The hypothesis implies that $K$ is a $\mc P_{n}$-field; see \Cref{FPS}. 
By an iterated application of \Cref{T:example2}, we obtain that $K(\!(t_1)\!)\ldots(\!(t_r)\!)$ is a $P_{n}$-field as well.
Hence $p(F)\leq 2^{n+1}+1$, by \Cref{Pythagoras FPS in 2 variable}.
\end{proof}

\begin{rem}
As mentioned in the introduction, for $K=\rr$, $r=0$ and $n=0$, the bound in \Cref{C:final bound} gives an alternative proof of \cite[Theorem 5.1]{Hu15}.
\Cref{C:final bound} also applies when $K$ is an extension of transcendence degree $n$ of $\rr$ (or of transcendence degree $n-1\geq 1$ of $\qq$) and  gives that $p(F)\leq 2^{n+1}+1$ for any field $F$ as in the statement.
When $n\geq 2$, this is an improvement compared to the bound $p(F)<2^{n+2}$, which one could derive from \cite[Corollary 4.7]{Hu17} by using \cite[Theorem 3.5]{BVG09}.
\end{rem}

\begin{qu}
Is $\rr(\!(t_1,t_2)\!)$ a $\mc{P}_1$-field?
\end{qu}

\section*{Acknowledgments}
The authors wish to express their gratitude to David Grimm and Yong Hu for inspiring discussions and helpful comments, in particular in the context of \Cref{dim 2}.
They further want to thank the referee for their careful reading and detailed corrections.

\bibliographystyle{plain}

\end{document}